\documentclass[11pt,a4paper,twoside]{article}

\usepackage{amssymb,bbm,amsfonts,amsmath,amsthm,euscript,a4wide,xargs}
\usepackage[english]{babel}
\usepackage[utf8]{inputenc}
\usepackage{dsfont}
\usepackage{enumitem}
\usepackage[bf,small]{caption}

\usepackage{hyperref}
 \hypersetup{colorlinks=true,citecolor=blue, urlcolor=blue, linkcolor=black, filecolor=black}

\def\rmi{\mathrm i}

\def\rme{\mathrm e}

\def\rmd{\mathrm d}

\def\var{\mathrm{Var}}
\def\1{\mathbbm 1}
\newcommand{\cum}[1]{\mathrm{Cum}\left(#1\right)}
\newcommand{\past}[1]{#1_{(-)}}
\newcommand{\fut}[1]{#1_{(+)}}
\newcommand{\futpast}[1]{#1_{(\pm)}}

\def\closure{\mathrm{Cl}}

\def\bandwidth{\beta}

\def\tore{\mathbb{T}}
\newcommand{\intpermu}{\mathrm{L}}
\newcommand{\intper}[1]{\int\,#1\,\rmd \intpermu}
\newcommand{\intperarg}[2]{\int\,#1\,\intpermu(\rmd #2)}
\def\1{\mathbbm{1}}
\def\cont{\mathcal{C}}
\def\R{\mathbb{R}}
\def\rset{\mathbb{R}}

\def\N{\mathbb{N}}
\def\nset{\mathbb{N}}

\def\E{\mathbb{E}}
\def\L{\mathbb{L}}

\def\C{\mathbb{C}}
\def\cset{\mathbb{C}}
\def\Z{\mathbb{Z}}

\def\v{\mbox{\rm Var}}
\newcommand{\abs}[1]{\left|#1\right|}

\def\cov{\mbox{\rm Cov}}

\newtheorem{theo}{Theorem}
\newtheorem{lem}{Lemma}
\newtheorem{prop}{Proposition}
\newtheorem{Def}{Definition}
\newtheorem{cor}{Corollary}

\newtheorem{Rem}{Remark}

\newcommand {\noi}{\noindent}

\begin{document}

\author{P. Doukhan\footnote{University Cergy-Pontoise, AGM UMR8088 and CIMFAV, University of Valparaiso.}, \
F. Roueff\footnote{LTCI, Telecom Paris, Institut Polytechnique de Paris},
 \ and \
J. Rynkiewicz\footnote{University Paris 1, la Sorbonne, SAMM EA4543.}  
}

\title{Spectral estimation for non-linear long range dependent
  discrete time trawl processes}

 \maketitle

\begin{abstract}\noi
  Discrete time trawl processes constitute a large class of time
  series parameterized by a trawl sequence $(a_j)_{j\in\N}$ and
  defined though a sequence of independent and identically distributed
  (i.i.d.) copies of a continuous time process $(\gamma(t))_{t\in\R}$
  called the seed process. They provide a general framework for
  modeling linear or non-linear long range dependent time series. We
  investigate the spectral estimation, either pointwise or broadband,
  of long range dependent discrete-time trawl processes. The
  difficulty arising from the variety of seed processes and of trawl
  sequences is twofold. First, the spectral density may take different
  forms, often including smooth additive correction terms. Second,
  trawl processes with similar spectral densities may exhibit very
  different statistical behaviors. We prove the consistency of our
  estimators under very general conditions and we show that a wide
  class of trawl processes satisfy them. This is done in particular by
  introducing a weighted weak dependence index that can be of
  independent interest. The broadband spectral estimator includes an
  estimator of the long memory parameter. We complete this work with
  numerical experiments to evaluate the finite sample size performance
  of this estimator for various integer valued discrete time trawl
  processes.
\end{abstract}

\medskip

\noi {\it Keywords:} trawl processes; integer-valued time series; long
memory parameter estimation

\noi {\it MSC:}  62M10; 62F12; 60G51;  

\section{Introduction}\label{sec1}
A \emph{discrete time
  trawl} process  $X=\{ X_k,\,k\in\Z\}$ is defined in  \cite{DJLS-spa} by 
\begin{equation}\label{g1}
X_k\ =\  \sum_{j=0}^\infty \gamma_{k-j}(a_j), \qquad k \in \Z,
\end{equation}
where
\begin{enumerate}[label=(A-\arabic*),series=hyp,start=1]
\item\label{item:basic} The sequence $(\gamma_k)_{k\in\Z}$ is a sequence of
  i.i.d. copies of a generic process $\gamma = \{\gamma (u), u \in \R\}$ and
  $a=\{a_j,\, j\geq0\}$ is a sequence converging to zero.
\end{enumerate}
Processes so defined  can be interpreted as discrete time versions of the
trawl processes introduced in \cite{bar2014}. The generic process $\gamma$ is
called the \emph{seed process} and the sequence $a$ is called the \emph{trawl
(height)  sequence}.

Additional assumptions are required to have a converging sum in~(\ref{g1}).
The convergence in $\L^{2}$ is guaranteed if
\begin{equation}
  \label{eq:hyp-a1}
  \sum_{j=0}^\infty\left(
  \left|\E\,\gamma(a_j)\right|+\v\,\gamma(a_j)\right)<\infty \;.
\end{equation}
See \cite[Proposition~1]{DJLS-spa}, where the covariance function
  is also given by the formula
 \begin{align}
    \label{eq:covariance-trawl}
  r(k)=\cov (X_0,X_k) =\sum_{j=0}^\infty\cov(\gamma(a_j),\gamma(a_{j+k}))\;.
 \end{align}
 By \cite[Proposition~3]{DJLS-spa}, we moreover know that if, in addition, the two
  following asymptotic behaviors hold:
  \begin{align}
    \label{g3} 
  \cov(\gamma(u),\gamma(v))&=\left(|u|\wedge|v|\right)\;(1+o(1))\quad\text{as}\quad
                             u,v\to0\;,\\
    \label{g4} 
  a_j \ &= \  c \ j^{-\alpha^*} (1+ o(1))\quad\text{as}\quad j \to \infty  \;,
\end{align}
with $c\neq0$ and $\alpha^*>1$, then, the covariance function behaves at large
lags as
\begin{equation}
  \label{eq:r-k-asymp}
r(k)= c'\ k^{1-\alpha^*}\ (1+o(1))\quad\text{as}\quad k \to \infty  \;.
\end{equation}
 In the following we will refer to $\alpha^*$ in~(\ref{g4}) as the
\emph{trawl exponent}. In particular if
\begin{equation}
  \label{eq:alpha-lrd}
1<\alpha^*<2 \;,  
\end{equation}
this behavior is
often referred to as $X$ being \emph{long range dependent} with long memory
parameter
\begin{equation}
  \label{eq:d-alpha}
d^*=1-\alpha^*/2 \;.  
\end{equation}
Here~(\ref{eq:alpha-lrd}) implies $d^*\in(0,1/2)$ (sometimes referred to as
\emph{positive} long memory).  We here use one of the several existing
definitions of long range dependence, see for instance Condition~II in
\cite[Section~2.1]{pipiras_taqqu_2017}. In fact in the cases considered here,
the same long memory parameter $d^*$ can also be defined through their
condition~IV, based on the spectral density.  In the case where
$\alpha^*\geq2$, the two definitions may no longer coincide. The definition of
\emph{negative} long memory ($d^*<0$) is generally relying on the behavior of
the spectral density at the origin (in particular imposing this spectral
density to vanish there). Adopting this definition the
formula~(\ref{eq:d-alpha}) may not be valid anymore as the obtained process
could have short memory ($d^*=0$) even if $\alpha^*>2$, or have negative long
memory ($d^*<0$). In the following, we will only consider the case where
$d^*\geq0$, avoiding the negative long memory case for convenience.

A very interesting feature of trawl processes is that under the fairly
general assumption~(\ref{g3}) on the seed process, the low frequency behavior
of the spectral density is mainly driven by the trawl sequence.
However, it is shown in
\cite{DJLS-spa} that, for a given trawl sequence, two different seed
processes can yield different large scale behaviors, as can be seen by
different types of limits in the invariance
principle. In the case of a L{\'e}vy seed for instance, a Brownian seed process
leads to an invariance principle with fractional Brownian motion limit, with
Hurst parameter $(3-\alpha^*)/2$, and a (centered) Poisson seed process leads
to an invariance principle with L{\'e}vy $\alpha^*$-stable limit, see
\cite[Theorems~1 and~2]{DJLS-spa}. 

The goal of this paper is to investigate the spectral estimation of a
long-range dependent process $X$ from a sample
$X_1,\dots,X_n$. Deriving general results applying to a wide class of
long range dependent trawl processes raise two major
difficulties. First, as already noted about the asymptotic results
derived in \cite{DJLS-spa}, the large scale behavior of such
processes, can be very different from one trawl process to another,
even with similar or even equal covariance structure. Second, the
spectral density has a closed form only in particular cases for the
seed process and the trawl sequence. The computation of the spectral
density function depends both on the seed process $\gamma$ and the
sequence $(a_j)$.  For instance, in \cite[Example~5]{DJLS-spa}, it is
shown that for a large class of seed processes (that will be referred
to as the Lévy seed process below), a specific sequence $(a_j)$ leads
to the same spectral density as an ARFIMA(0,$d^*$,0), namely,
\begin{equation}
    \label{eq:arfima}
f_{d^*}(\lambda)=\frac1{2\pi}\,\left|1-\rme^{-\rmi\lambda}\right|^{-2d^*}\;.
\end{equation}
Here the spectral density is normalized in such a way to have the
innovation process with unit variance. The general form that we will
assume on the spectral density includes of course a
multiplicative constant $c^*$ but also an additive smooth function
$h^*$ belonging to the space $\cont$ of continuous and $(2\pi)$
periodic functions endowed with the sup norm. Namely, to encompass as
many cases as possible, we assume that $X$ has a spectral density
function given by
\begin{equation}
  \label{eq:sp-dens-general}
f(\lambda)=c^* \, \left(f_{d^*}(\lambda)+h^*(\lambda)\right)\;,\qquad\lambda\in\R\;, 
\end{equation}
where $d^*\in[0,1/2)$, $c^*>0$ and $h^*\in\cont$. The
form~(\ref{eq:sp-dens-general}) is the spectral behavior corresponding to that
of the covariance in~(\ref{eq:r-k-asymp}). Here $d^*$ is again the long memory
parameter, and it characterizes the power law behavior of $f$ at low
frequencies while the function $h^*$ encompasses the short-range behavior. As
we will see, in many cases of interesting trawl processes, the function $h^*$
is smooth in the Hölder sense, leading naturally to the
additive parametric form~(\ref{eq:sp-dens-general}) of the spectral density, which is different for
the usual product parametric form usually encountered in linear models such as ARFIMA
processes. Note however that such an additive form of the spectral density
were already considered in \cite{hurvich05_lmsv} for completely different
(non-linear) models.

We consider either pointwise or broadband estimation of the spectral
density. In the first case, we estimate $f(\lambda)$ directly for a
given $\lambda$, and, in the second case, we estimate the triplet
$(c^*,d^*,h^*)$ by assuming it belongs to a known parameter set. The
first approach only makes sense for $\lambda\neq0$ and will be
investigated in Section~\ref{sec:second-order-estim} using a smoothed
version of the periodogram. The second approach will be investigated
in Section~\ref{sec:param-whittle-estim}. The estimation of the long
memory parameter is a widely studied problem in statistical inference,
see the reference book \cite{DOT03}, or, more recently, \cite{gir2012}
and the references therein.  Here, we propose to estimate the
parameter $(c^*,d^*,h^*)$ using a parametric Whittle approach. Define
the periodogram
\begin{equation}\label{eq:periodo-def}
  I_n(\lambda)=\frac1{2\pi n}\left|\sum_{k=1}^n(X_k-\bar X_n)\rme^{-\rmi\lambda k}\right|^2\;,
\end{equation}
where $\bar X_n$ denotes the empirical mean of the sample $X_1,\dots,X_n$, and
denote the Whittle contrast by
\begin{equation}
  \label{eq:whittle-contrast}
  \Lambda_n(d,h)=
  \ln\left(\intper{\frac{I_n}
  {f_d+h}}\right)\;+\intper{\ln \left(f_d+h\right)}
\; ,
\end{equation}
where $f_{d}$ is defined by~(\ref{eq:arfima}) and $\intpermu$ is the Lebesgue
measure on $[-\pi,\pi]$ divided by $2\pi$.
Our estimator $(\hat d_{n},\hat h_{n}, \hat c_{n})$ is to find a near minimizer
$(\hat d_{n},\hat h_{n})$ of
$(d,h)\mapsto\Lambda_n(d,h)$ over a well chosen set of parameters for $(d,h)$,
and then set
\begin{align}
    \label{eq:c-parm-estim}
                            \hat c_{n}&=\intper{\frac{I_n}
                                          { f_{\hat d_{n}}+\hat h_{n}}}\;.
\end{align}
From which we can also define an estimator of the spectral density, namely,
$$
\hat f_{n}= \hat c_{n}\ \left( f_{\hat d_{n}}+\hat h_{n}\right)\;.
$$
Here we  derive results that apply to a wide class of trawl
processes, in particular to those of nature quite different  from
the well studied class of Gaussian or linear processes. For
convenience, we focus on proving the consistency of our estimators
under very general assumptions, that can be of
interest beyond trawl processes:
\begin{enumerate}[resume*=hyp]
\item \label{item:hyp-consistency-ergo} The process $X=(X_k)_{k\in\Z}$ is stationary, ergodic
  and $L^2$.
\item\label{item:hyp-consistency-common} There exist $C_0>0$ and $s_0\in(0,1)$ such that,
  for all integers $t_1\leq t_2 \leq t_3\leq t_4$,
  \begin{align}\label{eq:cov-gen-ass-bound}
\abs{\cov(X_{t_1}\,,\,X_{t_2})}&\leq
                             C_0\,(1+t_2-t_1)^{-s_0}      \;,\\
    \label{eq:pairwise-cov-gen-ass-bound}
    \abs{\cov(X_{t_1}X_{t_2}\,,\,X_{t_3}X_{t_4})}&\leq
                                               C_1\,(1+t_3-t_2)^{-s_0}      \;,\\
    \label{eq:13wise-cov-gen-ass-bound}
    \abs{\cov(X_{t_1}X_{t_2}X_{t_3}\,,\,X_{t_4})}&\leq
                                               C_1\,(1+t_4-t_3)^{-s_0}      \;.
  \end{align}
\end{enumerate}
Assumption~\ref{item:hyp-consistency-ergo} is basically satisfied by
all well defined discrete-time trawl processes. To show that a given
trawl process satisfies~\ref{item:hyp-consistency-common} with a well
chosen exponent $s_0$, we will rely on a weighted weak dependence
property that is easy to prove for discrete-time trawl processes.

The paper is organized as follows. In Section~\ref{sec:main-results}, we
present successively: 1) general conditions on the seed process and the trawl
sequence so that the corresponding trawl process satisfies
Condition~\ref{item:hyp-consistency-ergo} and~\ref{item:hyp-consistency-common}
above, 2) general results on second order estimation under
Assumption~\ref{item:hyp-consistency-common} and 3) a general consistency
result on the parametric Whittle estimation of the parameters $(d^*,h^*)$ of
the unknown spectral density in~(\ref{eq:arfima}). For this estimation result
to hold, we only require on the observed process to
satisfy~\ref{item:hyp-consistency-ergo}. The assumption on the parameter set on
which the Whittle contrast is maximized will be detailed
in~\ref{item:hyp-consistency}. We provide in Section~\ref{sec:example-trawl}
various examples of trawl processes. Although the usual causal linear models
for long range dependence (such as ARFIMA processes) constitute specific
examples of trawl processes, we here focus on the non-linear models introduced
in \cite{DJLS-spa}, and specify simple sufficient conditions implying the
assumptions used in the general results. The proofs of the results presented in
Sections~\ref{sec:main-results} and Section~\ref{sec:example-trawl} are
detailed in Section~\ref{sec:proofs}.  Before that, we introduce in
Section~\ref{sec:weight-weak-depend} some weighted weak dependence coefficients
that can be of independent interest but which will mainly serve us here to
check~\ref{item:hyp-consistency-common} for trawl processes.
Finally in Section~\ref{sec:numer-exper}, we present numerical experiments
focusing on the estimation of the long memory parameter $d^*$ comparing our
approach to the more classical local Whittle estimator, which is known to
perform well for standard linear models. Concluding remarks including
directions for future work are proposed in Section~\ref{sec:conclusion}. 

\section{Main results}\label{sec:main-results}
\subsection{Results on trawl processes}
As explained in the introduction, the $L^2$ convergence of~(\ref{g1}) follows
from~(\ref{eq:hyp-a1}).  We provide hereafter a more precise statement, and a
slight extension to a convergence in $\L^{2p}$ with $p\geq1$. All the proofs of
this section are postponed to Section~\ref{sec:proof-trawl-processes}.
\begin{lem}\label{lemme1}
  Assume~\ref{item:basic}. Then~(\ref{eq:hyp-a1}) implies that the
  convergence~(\ref{g1}) holds in $\L^2$ and the resulting process $X$ is
  ergodic. A centered version of $X$ can be obtained by setting
  \begin{align}
&\tilde{\gamma}(u)=\gamma(u)-\E\gamma(u)\;,    \\
    \label{eq:centered-trawl}
&  \tilde X_k=X_k-  \sum_{j=0}^\infty \E\,
    \gamma(a_j)=\sum_{j=0}^\infty \tilde{\gamma}_{k-j}(a_j)\;,\qquad
    k\in\Z\;.
  \end{align}
  If moreover, we have, for some $p>1$,
  \begin{equation}
    \label{eq:hyp-a1-p}
    \sum_{j=0}^\infty\left\|\tilde{\gamma}(a_j)\right\|_{2p}^{2p}<\infty\;,
  \end{equation}
  then the convergence~(\ref{g1}) also holds in $\L^{2p}$.
\end{lem}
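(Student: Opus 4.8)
The plan is to treat the four assertions separately --- the $\L^2$ convergence, the ergodicity, the centering identity, and the $\L^{2p}$ convergence --- the first and the last resting on the independence of the summands, and the middle ones on a measurable-factor argument. First I would establish the $\L^2$ convergence by the Cauchy criterion. Fixing $k$ and setting $S_{N,M}=\sum_{j=N+1}^{M}\gamma_{k-j}(a_j)$ for $M>N$, I note that since the map $j\mapsto k-j$ is injective and the copies $(\gamma_i)_{i\in\Z}$ are i.i.d. under~\ref{item:basic}, the summands of $S_{N,M}$ are independent, so that $\E S_{N,M}=\sum_{j=N+1}^{M}\E\gamma(a_j)$ and $\v\,S_{N,M}=\sum_{j=N+1}^{M}\v\,\gamma(a_j)$. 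Hence $\|S_{N,M}\|_2\leq |\E S_{N,M}|+(\v\,S_{N,M})^{1/2}\leq \sum_{j=N+1}^{M}|\E\gamma(a_j)|+\big(\sum_{j=N+1}^{M}\v\,\gamma(a_j)\big)^{1/2}$, and both terms are tails of the convergent series in~(\ref{eq:hyp-a1}); they vanish as $N\to\infty$ uniformly in $M$. Thus the partial sums are Cauchy in $\L^2$ and~(\ref{g1}) converges.

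The centering identity~(\ref{eq:centered-trawl}) then follows at once: the series $\sum_j\E\gamma(a_j)$ converges absolutely by~(\ref{eq:hyp-a1}), so subtracting this constant from $X_k$ yields a centered limit, which by linearity of the $\L^2$ limit equals $\sum_j\tilde\gamma_{k-j}(a_j)$ with $\tilde\gamma=\gamma-\E\gamma$.

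For the ergodicity I would argue that $X$ is a measurable factor of a Bernoulli shift. The two-sided sequence $(\gamma_i)_{i\in\Z}$ of i.i.d.\ copies, viewed on the product probability space equipped with the coordinate shift $S$, is an ergodic (indeed mixing) dynamical system. The functional $\phi\big((\gamma_i)_{i\in\Z}\big)=\sum_{j\geq 0}\gamma_{-j}(a_j)$, defined as the $\L^2$ limit of the measurable partial sums, admits a measurable version, and one checks directly that $X_k=\phi\circ S^{k}$. Since any stationary process obtained by applying a fixed measurable map along the orbit of an ergodic transformation is itself stationary and ergodic, $X$ is ergodic. The only point deserving care here is the measurability of $\phi$, which holds because an $\L^2$ (hence in probability, hence almost sure along a subsequence) limit of measurable functions is measurable.

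Finally, for the $\L^{2p}$ convergence I would again use the Cauchy criterion, now on the centered partial sums $\tilde S_{N,M}=\sum_{j=N+1}^{M}\tilde\gamma_{k-j}(a_j)$, a sum of independent centered variables. Applying Rosenthal's inequality with exponent $q=2p\geq 2$ gives $\|\tilde S_{N,M}\|_{2p}^{2p}\leq C_{2p}\big[\big(\sum_{j=N+1}^{M}\v\,\gamma(a_j)\big)^{p}+\sum_{j=N+1}^{M}\|\tilde\gamma(a_j)\|_{2p}^{2p}\big]$. The first bracketed term is controlled by the tail of $\sum_j\v\,\gamma(a_j)<\infty$ coming from~(\ref{eq:hyp-a1}), and the second by the tail of $\sum_j\|\tilde\gamma(a_j)\|_{2p}^{2p}<\infty$ coming from~(\ref{eq:hyp-a1-p}); both vanish as $N\to\infty$. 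Hence the centered partial sums are Cauchy in $\L^{2p}$, and adding back the convergent constant $\sum_j\E\gamma(a_j)$ shows that~(\ref{g1}) converges in $\L^{2p}$ as well. I expect the main obstacle to be the ergodicity step --- specifically, isolating the correct measurable-factor statement and verifying the measurability of the limiting functional $\phi$ --- since the two convergence statements are routine once independence is used to make the variance additive and Rosenthal's inequality is invoked.
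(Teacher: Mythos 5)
Your proposal is correct and follows essentially the same route as the paper: the paper also bounds the $\L^{2p}$ norm of the partial sums via Rosenthal's inequality for sums of independent variables (your $\L^2$ step being exactly its $p=1$ specialization, i.e.\ the mean-plus-standard-deviation bound), and it also obtains ergodicity by writing $X_k$ as a fixed measurable function of the shifted i.i.d.\ sequence $(\gamma_{k-j})_{j\geq0}$. The only cosmetic difference is that the paper applies Rosenthal once to the non-centered sums, absorbing the term $\sum_j|\E\gamma(a_j)|$, whereas you center first and add the constant back at the end.
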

Having a condition for $X$ to satisfy~\ref{item:hyp-consistency-ergo} and to be
$L^{2p}$, we now provides conditions for
obtaining~\ref{item:hyp-consistency-common}, which requires $p\geq2$ for the
considered covariances to be well defined.
  \begin{theo}\label{theo:trawl-gen-ass}
    Assume~\ref{item:basic} and~(\ref{eq:hyp-a1}). Then $X$ is an
    $\L^{2}$ stationary process (by
    Lemma~\ref{lemme1}), and~\ref{item:hyp-consistency-common} follows from
    any of
    the two following assertions with the same $s_0\in(0,1)$.
    \begin{enumerate}[label=(\roman*)]
    \item\label{item:trawl-gen-ass1} We have~(\ref{eq:hyp-a1-p}) with $p=2$ and , as
      $r\to\infty$,
      \begin{align}
    \label{eq:cond-weak-dep-consistency-commen-p3}
    &  \sum_{j=r}^\infty\v\,\gamma(a_j)= {\cal O}(r^{-2s_0})\;,\\        
            \label{eq:cond-weak-dep-consistency-commen-p2add}
 &\sum_{j=r}^\infty\left\|\tilde{\gamma}(a_j)\right\|_{2p}^{2p}= {\cal
  O}(r^{-2p\ s_0}) \;.      
      \end{align}
    \item\label{item:trawl-gen-ass2} We have~(\ref{eq:hyp-a1-p}) with $p=3$
      and~(\ref{eq:cond-weak-dep-consistency-commen-p3}) holds as $r\to\infty$.
\end{enumerate}
  \end{theo}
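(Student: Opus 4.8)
The plan is to exploit the one-sided innovation structure of the trawl process. Reindexing~(\ref{g1}) by $m=k-j$ gives $X_k=\sum_{m\le k}\gamma_m(a_{k-m})$, so that $X_k$ is a measurable function of the innovations $\{\gamma_m:\ m\le k\}$, which are independent across $m$; its centered version is $\tilde X_k=\sum_{m\le k}\tilde{\gamma}_m(a_{k-m})$. All three covariances in~\ref{item:hyp-consistency-common} are unchanged by centering each coordinate, and after writing $X_k=\E X_0+\tilde X_k$ and expanding the products, every covariance reduces by bilinearity to a finite sum of terms $\cov(\prod_{i\in A}\tilde X_{t_i},\prod_{l\in B}\tilde X_{t_l})$ with $A$ on the left and $B$ on the right of the gap (terms carrying a constant factor drop out). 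It therefore suffices to bound each such term. The first bound~(\ref{eq:cov-gen-ass-bound}) is immediate: by~(\ref{eq:covariance-trawl}) and Cauchy--Schwarz applied twice,
\begin{equation*}
\abs{r(k)}\le\sum_{j\ge0}\sqrt{\v\gamma(a_j)}\,\sqrt{\v\gamma(a_{j+k})}\le\Big(\sum_{j\ge0}\v\gamma(a_j)\Big)^{1/2}\Big(\sum_{j\ge k}\v\gamma(a_j)\Big)^{1/2}={\cal O}(k^{-s_0})\;,
\end{equation*}
using~(\ref{eq:cond-weak-dep-consistency-commen-p3}); this needs only $\L^2$ and holds under both~\ref{item:trawl-gen-ass1} and~\ref{item:trawl-gen-ass2}.

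For the higher-order covariances I would introduce, for a right-hand variable $X_{t}$ and a separating index $s$ (equal to $t_2$ for~(\ref{eq:pairwise-cov-gen-ass-bound}) and to $t_3$ for~(\ref{eq:13wise-cov-gen-ass-bound})), the splitting $\tilde X_{t}=\tilde X_{t}^{>s}+\tilde X_{t}^{\le s}$ with $\tilde X_{t}^{>s}=\sum_{s<m\le t}\tilde\gamma_m(a_{t-m})$ and $\tilde X_{t}^{\le s}=\sum_{m\le s}\tilde\gamma_m(a_{t-m})$. Both parts are centered; the ``new'' part depends only on innovations of index $>s$ and is therefore independent of the whole left group, while the ``old'' part only involves trawl heights $a_j$ with $j\ge t-s\ge g$, where $g$ is the relevant gap ($t_3-t_2$, resp. $t_4-t_3$). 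Expanding the right-hand product over these two parts, the purely ``new'' term is independent of the left group and contributes $0$ to the covariance, so that every surviving term carries at least one centered old factor. The key quantitative input is the exact fourth-moment identity for a sum of independent centered variables, giving
\begin{equation*}
\|\tilde X_{t}^{\le s}\|_2^2=\sum_{j\ge t-s}\v\gamma(a_j)\le\sum_{j\ge g}\v\gamma(a_j)\;,\qquad\|\tilde X_{t}^{\le s}\|_4^4\le\sum_{j\ge g}\|\tilde\gamma(a_j)\|_4^4+3\Big(\sum_{j\ge g}\v\gamma(a_j)\Big)^2\;,
\end{equation*}
while the full variables and the ``new'' parts are sums of independent centered terms over subsets of the heights and hence have $\L^{2p}$ norms bounded uniformly in $t,s$ by a Rosenthal-type bound that is finite under~(\ref{eq:hyp-a1}) and~(\ref{eq:hyp-a1-p}) (with $\|\tilde X_0\|_{2p}<\infty$ by Lemma~\ref{lemme1}).

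It then remains to distribute Hölder exponents. Under~\ref{item:trawl-gen-ass1} ($p=2$) I bound each surviving term by Hölder with all four factors in $\L^4$; the old factor satisfies $\|\tilde X_{t}^{\le s}\|_4={\cal O}(g^{-s_0})$ because~(\ref{eq:cond-weak-dep-consistency-commen-p2add}) controls $\sum_{j\ge g}\|\tilde\gamma(a_j)\|_4^4={\cal O}(g^{-4s_0})$ and~(\ref{eq:cond-weak-dep-consistency-commen-p3}) controls the squared-variance tail, so each term is ${\cal O}(g^{-s_0})$. Under~\ref{item:trawl-gen-ass2} ($p=3$) I no longer have a rate on the $\L^4$ tail, so instead I place the single small old factor in $\L^2$ (where $\|\tilde X_{t}^{\le s}\|_2={\cal O}(g^{-s_0})$ follows from~(\ref{eq:cond-weak-dep-consistency-commen-p3}) alone) and the remaining at most three bounded factors in $\L^6$; since $\tfrac12+3\cdot\tfrac16=1$, Hölder again yields ${\cal O}(g^{-s_0})$. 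In both cases the finitely many lower-order terms coming from the centering carry a gap at least as large as $g$ (every pair $(i,l)$ with $i$ on the left and $l$ on the right satisfies $t_l-t_i\ge g$), so they decay at least as fast, and summing the ${\cal O}(1)$ contributions gives~(\ref{eq:pairwise-cov-gen-ass-bound}) and~(\ref{eq:13wise-cov-gen-ass-bound}).

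I expect the main obstacle to be the moment control of the old parts at the correct rate, together with the precise role of the two hypotheses. The squared-variance tail only produces the $\big(\sum_{j\ge g}\v\gamma(a_j)\big)^2$ contribution, so under~\ref{item:trawl-gen-ass1} the additional hypothesis~(\ref{eq:cond-weak-dep-consistency-commen-p2add}) is genuinely needed to handle the $\sum_{j\ge g}\|\tilde\gamma(a_j)\|_4^4$ term appearing in the four-factor covariances; whereas under~\ref{item:trawl-gen-ass2} the extra integrability ($\L^6$ rather than $\L^4$) is exactly what lets one isolate a single $\L^2$-small factor and dispense with that hypothesis. The remaining delicate point is purely organizational: verifying that the (asymmetric) Hölder exponents balance to $1$ in every term of the product expansion, and that no surviving term fails to contain an old factor.
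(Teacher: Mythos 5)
Your proof is correct and is in substance the same as the paper's: your old/new splitting of each right-hand variable at the separating index is exactly the coupling argument behind the paper's causal Bernoulli shift coefficients $\pi_r^{(q)}$ (bounded via Rosenthal by the same tail sums $S_r(q)$), and your H\"older allocations --- all factors in $\L^4$ under (i) versus one $\L^2$-small old factor against three $\L^6$-bounded factors under (ii) --- coincide with the exponent choices made in the paper's Lemmas~\ref{lemme1weights} and~\ref{lemme1weights-better} applied with $(\past{p},\fut{p})=(2,2)$ and $(3,1)$. The only difference is presentational: the paper packages the argument into general weighted weak dependence coefficients, while you carry it out directly on the four relevant covariances.
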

\subsection{Second order estimation}
\label{sec:second-order-estim}
In this section, we suppose that $X$ is a weakly stationary process with
auto-covariance $r$ or spectral density $f$. All the proof of this section are
postponed to Section~\ref{sec:postponed-proofs} for convenience.

The main assumption that we will require on $X$
is~\ref{item:hyp-consistency-common}.  It is interesting to note that,
if~(\ref{eq:cov-gen-ass-bound}) holds then
assuming~(\ref{eq:pairwise-cov-gen-ass-bound})
and~(\ref{eq:13wise-cov-gen-ass-bound}) is equivalent to assuming
\begin{equation}
  \label{eq:cum-gen-ass-bound}
  \left|\cum{X_{t_1}\ ,\ X_{t_2}\ ,\ X_{t_3}\ ,\ X_{t_4}}\right|\leq C_2\;\left[(1+t_4-t_3)^{-s_0}\wedge(1+t_3-t_2)^{-s_0}\right]\;.
\end{equation}
The precise statement is the following.
\begin{lem}\label{lem:cons-emp-cov-centered}
    Let $X$ be a weakly stationary process with zero mean such
    that~(\ref{eq:cov-gen-ass-bound}) holds for all $t_1\leq t_2$ in
    $\Z$. Then, for all  $t_1\leq t_2 \leq t_3\leq t_4$  in
    $\Z$,~(\ref{eq:pairwise-cov-gen-ass-bound})
    and~(\ref{eq:13wise-cov-gen-ass-bound}) imply (\ref{eq:cum-gen-ass-bound}),
    with $C_2=C_1+3C_0^2$ and (\ref{eq:cum-gen-ass-bound}) implies~(\ref{eq:pairwise-cov-gen-ass-bound})
    and~(\ref{eq:13wise-cov-gen-ass-bound}) with $C_1=C_2+3C_0^2$. 
\end{lem}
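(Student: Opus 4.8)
The plan is to reduce both implications to the moment--cumulant expansion of the fourth joint cumulant together with two bookkeeping identities, and then to propagate the decay rates using only the ordering $t_1\le t_2\le t_3\le t_4$ and the crude bound $\abs{\cov(X_{t_i},X_{t_j})}\le C_0$ (which follows from~\eqref{eq:cov-gen-ass-bound} since $(1+\cdot)^{-s_0}\le 1$). First I would record that, for centered variables, the usual expansion gives $\cum{X_{t_1},X_{t_2},X_{t_3},X_{t_4}}$ as the fourth moment $\E[X_{t_1}X_{t_2}X_{t_3}X_{t_4}]$ minus the sum of the three pairwise covariance products, all partitions containing a singleton block dropping out because $\E X_{t_i}=0$. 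Since $\cov(X_{t_1}X_{t_2},X_{t_3}X_{t_4})=\E[X_{t_1}X_{t_2}X_{t_3}X_{t_4}]-\cov(X_{t_1},X_{t_2})\cov(X_{t_3},X_{t_4})$ and $\cov(X_{t_1}X_{t_2}X_{t_3},X_{t_4})=\E[X_{t_1}X_{t_2}X_{t_3}X_{t_4}]$ (as $\E X_{t_4}=0$), substitution yields the two identities
\[
\cov(X_{t_1}X_{t_2},X_{t_3}X_{t_4})=\cum{X_{t_1},X_{t_2},X_{t_3},X_{t_4}}+\cov(X_{t_1},X_{t_3})\cov(X_{t_2},X_{t_4})+\cov(X_{t_1},X_{t_4})\cov(X_{t_2},X_{t_3}),
\]
\[
\cov(X_{t_1}X_{t_2}X_{t_3},X_{t_4})=\cum{X_{t_1},X_{t_2},X_{t_3},X_{t_4}}+\cov(X_{t_1},X_{t_2})\cov(X_{t_3},X_{t_4})+\cov(X_{t_1},X_{t_3})\cov(X_{t_2},X_{t_4})+\cov(X_{t_1},X_{t_4})\cov(X_{t_2},X_{t_3}).
\]

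For the forward implication I would solve each identity for the cumulant. From the first identity and~\eqref{eq:pairwise-cov-gen-ass-bound}, the pairwise covariance term contributes $C_1(1+t_3-t_2)^{-s_0}$, and in each of the two products one factor (namely $\cov(X_{t_1},X_{t_3})$, resp. $\cov(X_{t_2},X_{t_3})$, whose lags are $\ge t_3-t_2$) is bounded by $C_0(1+t_3-t_2)^{-s_0}$ and the other by $C_0$, giving $\abs{\cum{X_{t_1},X_{t_2},X_{t_3},X_{t_4}}}\le(C_1+2C_0^2)(1+t_3-t_2)^{-s_0}$. From the second identity and~\eqref{eq:13wise-cov-gen-ass-bound}, each of the three products carries a factor $\cov(\cdot,X_{t_4})$ of lag $\ge t_4-t_3$, so each contributes $C_0^2(1+t_4-t_3)^{-s_0}$, whence $\abs{\cum{X_{t_1},X_{t_2},X_{t_3},X_{t_4}}}\le(C_1+3C_0^2)(1+t_4-t_3)^{-s_0}$. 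Taking the larger constant delivers~\eqref{eq:cum-gen-ass-bound} with $C_2=C_1+3C_0^2$.

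The converse runs the same two identities in the other direction. For~\eqref{eq:pairwise-cov-gen-ass-bound} I bound $\abs{\cum{X_{t_1},X_{t_2},X_{t_3},X_{t_4}}}$ by $C_2(1+t_3-t_2)^{-s_0}$ (the relevant half of the minimum in~\eqref{eq:cum-gen-ass-bound}) plus the two covariance products ($+2C_0^2$); for~\eqref{eq:13wise-cov-gen-ass-bound} I use $C_2(1+t_4-t_3)^{-s_0}$ plus the three products ($+3C_0^2$). The uniform choice $C_1=C_2+3C_0^2$ then works for both bounds.

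The computation is entirely elementary; the only step requiring care is matching each covariance factor to the gap carrying the desired decay, which is exactly where $t_1\le t_2\le t_3\le t_4$ and the monotonicity of $r\mapsto(1+r)^{-s_0}$ enter --- for instance $t_3-t_1\ge t_3-t_2$ and $t_4-t_2\ge t_3-t_2$ make both $\cov(X_{t_1},X_{t_3})$ and $\cov(X_{t_2},X_{t_4})$ decay at rate $(1+t_3-t_2)^{-s_0}$. The asymmetry that the first identity has two pairing products while the second has three is precisely what forces the $3C_0^2$ (rather than $2C_0^2$) term in both constants, and keeping that bookkeeping straight is the main, if modest, obstacle.
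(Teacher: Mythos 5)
Your proof is correct and follows essentially the same route as the paper: both rest on the two moment--cumulant identities expressing $\cov(X_{t_1}X_{t_2},X_{t_3}X_{t_4})$ and $\cov(X_{t_1}X_{t_2}X_{t_3},X_{t_4})$ as the fourth cumulant plus two (resp.\ three) pairwise covariance products, combined with the bound~(\ref{eq:cov-gen-ass-bound}) and the ordering of the $t_i$. Your version merely spells out the lag-matching and constant bookkeeping that the paper leaves implicit.
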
 
We denote the empirical covariance function  by
\begin{equation}\label{eq:emp-cov-def}
  \widehat r_n(m)=\frac1{n}\sum_{j=1}^{n-m}(X_j-\bar X_n)(X_{j+m}-\bar X_n)
\end{equation}
where $\bar X_n$ denotes the empirical mean of the sample $X_1,\dots,X_n$.
The centering in the definitions of $\widehat r_n$ 
can be treated separately.  Define non-centered covariance estimator
\begin{equation}
  \label{eq:emp-cov-centered}
\widetilde r_n(k)= \frac1{n}\sum_{j=1}^{n-k}  X_j  X_{j+k}\;.
\end{equation}
The empirical covariance function defined by~(\ref{eq:emp-cov-def}) can then be
written as
\begin{equation}
  \label{eq:decomp-emp-cov}
\widehat r_n(k)=\widetilde r_n(k) - R_n^r(k)
\end{equation}
where $\widetilde r_n$ is the non-centered empirical covariance function
defined in~(\ref{eq:emp-cov-centered})
and
$R^n_n(k)$ is the reminder term defined by
\begin{equation}
  \label{eq:decomp-emp-cov-remainder}
R^r_n(k)=  \frac {k}n\left(\bar X_n\right)^2+\bar X_{n}\ \left(\frac1n\sum_{j=k+1}^{n-k} X_j\right)\;.
\end{equation}
This term is ``small'' only if $X$ is a centered process. Nevertheless, $X$ can
be assumed centered here, since the empirical covariance $\widehat r_n$ is
unchanged  when  $X$ is replaced by its centered version.

In the case where $X$ has mean zero, we have the following result. 
\begin{prop}\label{prop:cons-emp-cov-centered}
  Let $X$ be an $L^4$ process with zero mean and
  satisfying~\ref{item:hyp-consistency-common}. Then there exists a constant
  $C'$ only depending on $C_0,C_1$ and $s_0$ such that, for all $0\leq k\leq \ell<n$, 
 \begin{equation}
   \label{eq:empcov-cov-gen-ass}
\abs{\cov(\widetilde r_n(k)\ ,\ \widetilde r_n(\ell))}\leq C'\, n^{-s_0}\;.
 \end{equation}
\end{prop}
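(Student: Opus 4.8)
The plan is to expand the covariance $\cov(\widetilde r_n(k),\widetilde r_n(\ell))$ into a double sum of fourth-order quantities and bound each contribution using the covariance and cumulant decay hypotheses of \ref{item:hyp-consistency-common}. Writing out the definitions,
\begin{equation*}
\cov(\widetilde r_n(k),\widetilde r_n(\ell))=\frac1{n^2}\sum_{i=1}^{n-k}\sum_{j=1}^{n-\ell}\cov(X_iX_{i+k},X_jX_{j+\ell})\;.
\end{equation*}
The first step is to apply the product-moment expansion of a fourth-order covariance into a sum of a fourth cumulant and products of second-order covariances. Concretely, for the four random variables involved,
\begin{equation*}
\cov(X_iX_{i+k},X_jX_{j+\ell})=\cum{X_i,X_{i+k},X_j,X_{j+\ell}}+\cov(X_i,X_j)\cov(X_{i+k},X_{j+\ell})+\cov(X_i,X_{j+\ell})\cov(X_{i+k},X_j)\;.
\end{equation*}
This is where Lemma~\ref{lem:cons-emp-cov-centered} enters: it allows me to control the fourth cumulant via~(\ref{eq:cum-gen-ass-bound}), while the two product-of-covariances terms are controlled directly by~(\ref{eq:cov-gen-ass-bound}).

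The second step is to bound each of the three resulting double sums by $C'n^{-s_0}$. For the product terms, each factor is a covariance of the form $C_0(1+\text{gap})^{-s_0}$, and the gaps involve differences such as $|i-j|$ and $|i+k-(j+\ell)|$; after summing over one index the geometric-type decay (with exponent $s_0\in(0,1)$) yields a bounded sum, leaving a factor $1/n$ from the normalization, which gives the desired $n^{-s_0}$ rate once I account that $s_0<1$. The cumulant term is handled analogously: the bound~(\ref{eq:cum-gen-ass-bound}) involves a minimum of two gap-decay factors indexed by the \emph{ordered} times, so the main bookkeeping is to sort $\{i,i+k,j,j+\ell\}$ into increasing order and identify which consecutive gaps appear in the bound. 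Because the bound is a minimum, I can use whichever gap is most convenient for summation, and summing the single-variable tail $\sum_{m\geq0}(1+m)^{-s_0}$ against the remaining free index again produces the $n^{-s_0}$ factor after the $1/n^2$ normalization.

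The main obstacle I anticipate is the combinatorial case analysis required to keep the ordering of the four time indices straight across the whole sum: as $i,j$ range over their respective ranges and $k\leq\ell$ are fixed, the relative order of $i+k$ versus $j$ (and of $i$ versus $j+\ell$, etc.) changes, so the cumulant bound~(\ref{eq:cum-gen-ass-bound}) attaches to different gaps in different regions of the $(i,j)$ grid. The clean way to handle this is to avoid enumerating all orderings by instead bounding the total sum by a sum over the \emph{diameter} or the relevant consecutive spacings, using translation invariance to reduce each double sum to $n$ times a single convergent sum in the gap variable; since $s_0<1$, the tail sums are finite and the overall $n^{-2}\cdot n\cdot(\text{bounded})$ scaling collapses to the claimed $n^{-s_0}$ once one tracks that at least one gap can be made to carry the decay while the free index ranges over $O(n)$ values. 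The constant $C'$ then depends only on $C_0,C_1,s_0$ through the tail sum $\sum_{m\geq0}(1+m)^{-s_0}$ and the constant $C_2$ from Lemma~\ref{lem:cons-emp-cov-centered}.
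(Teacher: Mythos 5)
Your route is the same as the paper's: expand $\cov(X_iX_{i+k},X_jX_{j+\ell})$ into the fourth cumulant plus two products of covariances (this is exactly the identity~(\ref{eq:identity-cum4covpairwise})), control the cumulant via Lemma~\ref{lem:cons-emp-cov-centered} and~(\ref{eq:cum-gen-ass-bound}) after sorting the four times, and reduce each double sum by translation invariance in $s-s'$. The architecture is therefore fine.

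There is, however, a genuine error in the quantitative step that is supposed to produce the rate. You assert repeatedly that the relevant sums converge --- ``yields a bounded sum'', ``since $s_0<1$, the tail sums are finite'', and that $C'$ depends on $s_0$ ``through the tail sum $\sum_{m\geq0}(1+m)^{-s_0}$''. For $s_0\in(0,1)$, which is exactly the range imposed by~\ref{item:hyp-consistency-common}, this series \emph{diverges}; it would be summable only for $s_0>1$. Moreover your accounting ``$n^{-2}\cdot n\cdot(\mathrm{bounded})$'' would yield the rate $n^{-1}$, not the claimed $n^{-s_0}$, so the argument is internally inconsistent as well as resting on a false premise. The correct bookkeeping, as in the paper, uses the growth of the \emph{partial} sums,
\begin{equation*}
\sum_{\tau=-n}^{n}(1+|\tau|)^{-s_0}\;\leq\;\frac{4}{1-s_0}\,(1+n)^{1-s_0}\;,
\end{equation*}
so that each double sum is at most $n^{-2}\cdot n\cdot O(n^{1-s_0})=O(n^{-s_0})$; this is precisely where the exponent $s_0$ (rather than $1$) comes from, and $C'$ depends on $s_0$ through the factor $1/(1-s_0)$, not through a (divergent) tail sum. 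A smaller point: for the cumulant term you propose bounding ``by a sum over the diameter'', but~(\ref{eq:cum-gen-ass-bound}) involves specific \emph{consecutive} gaps of the sorted times, not the diameter, so one cannot avoid the short case analysis (three orderings, as in the paper) verifying that in each configuration one of the two admissible gaps equals $|s-s'+j|$ for some $j\in\{0,k,\ell\}$; only then does the translation-invariance reduction apply.
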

The following result follows.
\begin{cor}\label{cor:covar-estim}
  Let $X$ be a weakly stationary $L^4$ process 
  satisfying~\ref{item:hyp-consistency-common} with covariance function $r$.
Then there exists a constant
  $C'$ only depending on $C_0,C_1$ and $s_0$ such that, for all $0\leq k\leq \ell<n$, 
 \begin{equation}
   \label{eq:empcov-cov-noncentered}
\max_{0\leq k<n}\E\, \abs{\hat r_n(k)-r(k)}\leq C'\, n^{-s_0/2}\;.
 \end{equation}
\end{cor}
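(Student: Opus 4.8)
The plan is to deduce the corollary from Proposition~\ref{prop:cons-emp-cov-centered} by combining two reductions: passing from the non-centered estimator $\widetilde r_n$ to the centered one $\widehat r_n$, and passing from the variance bound to an $L^1$ bound on $|\widehat r_n(k)-r(k)|$. First I would reduce to the mean-zero case. Since $\widehat r_n$ is invariant under replacing $X$ by $X-\E X_0$ (the centering in~(\ref{eq:emp-cov-def}) already subtracts $\bar X_n$), I may assume $\E X_0=0$ without loss of generality, so that $r(k)=\cov(X_0,X_k)=\E[X_0X_k]$.

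Next I would control the bias and variance of $\widetilde r_n(k)$ separately and then bound $\E|\widehat r_n(k)-r(k)|$ via
\[
\E\,\abs{\widehat r_n(k)-r(k)}\le \left(\E\,\abs{\widehat r_n(k)-r(k)}^2\right)^{1/2}
= \left(\var(\widehat r_n(k))+\left(\E\widehat r_n(k)-r(k)\right)^2\right)^{1/2}.
\]
For the variance term, the decomposition~(\ref{eq:decomp-emp-cov}) writes $\widehat r_n(k)=\widetilde r_n(k)-R^r_n(k)$, and I would argue that $\var(\widehat r_n(k))$ is dominated by $\var(\widetilde r_n(k))\le C'\,n^{-s_0}$ from Proposition~\ref{prop:cons-emp-cov-centered} (taking $k=\ell$), up to the contribution of the remainder $R^r_n(k)$. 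Since $X$ is centered, $\bar X_n$ has mean zero and, under~(\ref{eq:cov-gen-ass-bound}), $\var(\bar X_n)=\mathcal O(n^{-s_0})$; combined with the $L^4$ moment bounds this makes $R^r_n(k)$ of order $n^{-s_0/2}$ in $L^2$ uniformly in $0\le k<n$, so it contributes at most the target rate. For the bias term, $\E\widetilde r_n(k)=\frac{n-k}{n}\,r(k)$, hence $\E\widetilde r_n(k)-r(k)=-\frac{k}{n}\,r(k)$, which is bounded by $|r(k)|\le C_0$ times $k/n$; this is $\mathcal O(1)$ uniformly but must be handled carefully for large $k$.

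The main obstacle I anticipate is the uniformity over all $0\le k<n$, particularly for $k$ comparable to $n$, where the bias $\frac{k}{n}r(k)$ need not be small. The resolution is that the covariance decay~(\ref{eq:cov-gen-ass-bound}) gives $|r(k)|\le C_0(1+k)^{-s_0}$, so the bias is bounded by $C_0\,\frac{k}{n}(1+k)^{-s_0}\le C_0\,n^{-s_0}\cdot\frac{k}{n^{1-s_0}(1+k)^{-s_0}\cdot n^{s_0-1}}$; more cleanly, $\frac{k}{n}(1+k)^{-s_0}\le \frac{k^{1-s_0}}{n}\le n^{-s_0}$ since $k<n$ and $s_0\in(0,1)$, which is even smaller than the claimed $n^{-s_0/2}$. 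Thus the bias is negligible compared to the variance contribution, and the $n^{-s_0/2}$ rate is entirely dictated by taking the square root of the $n^{-s_0}$ variance bound. Assembling these estimates and taking the maximum over $k$ yields~(\ref{eq:empcov-cov-noncentered}) with a constant $C'$ depending only on $C_0,C_1,s_0$.
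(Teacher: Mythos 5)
Your overall strategy matches the paper's: reduce to the centered case, get the variance of $\widetilde r_n(k)$ from Proposition~\ref{prop:cons-emp-cov-centered}, bound the bias $-\frac kn r(k)$ by $C_0\,k^{1-s_0}/n\leq C_0\,n^{-s_0}$ using~(\ref{eq:cov-gen-ass-bound}), and control the remainder $R^r_n(k)$. The bias analysis and the uniformity discussion are correct and are exactly what the paper does.

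There is, however, one step that is asserted rather than proved, and it is not as innocent as you make it sound: the claim that $R^r_n(k)$ is $O(n^{-s_0/2})$ \emph{in $L^2$}. Because you first pass to $\bigl(\E|\widehat r_n(k)-r(k)|^2\bigr)^{1/2}$, you are committed to bounding $\|R^r_n(k)\|_2$, and by~(\ref{eq:decomp-emp-cov-remainder}) this requires $\|\bar X_n^2\|_2=\|\bar X_n\|_4^2$, i.e.\ a quantitative bound on $\E\,\bar X_n^4$. This does not follow from $\var(\bar X_n)=\mathcal O(n^{-s_0})$ together with the mere fact that $X$ is $L^4$: one must actually expand $\E\bigl(\sum_j X_j\bigr)^4$ into covariance products and a fourth-order cumulant sum and invoke~(\ref{eq:cum-gen-ass-bound}) (via Lemma~\ref{lem:cons-emp-cov-centered}) to get $\E\,\bar X_n^4=\mathcal O(n^{-s_0})$, whence $\|\bar X_n^2\|_2=\mathcal O(n^{-s_0/2})$ and similarly for the cross term. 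So the route is salvageable but the key estimate is missing. The paper sidesteps this entirely: since the target is an $L^1$ bound, it first applies the triangle inequality $\E|\widehat r_n(k)-r(k)|\leq\E|\widetilde r_n(k)-r(k)|+\E|R^r_n(k)|$, uses Cauchy--Schwarz only on the first term (where Proposition~\ref{prop:cons-emp-cov-centered} applies), and bounds $\E|R^r_n(k)|$ directly by $n^{-2}\bigl(\v\sum_{j\leq n}X_j+(\v\sum_{j\leq n}X_j)^{1/2}(\v\sum_{k<j\leq n-k}X_j)^{1/2}\bigr)=\mathcal O(n^{-s_0})$, which needs nothing beyond second moments and~(\ref{eq:var-sum-trivial}). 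You should either carry out the fourth-moment computation explicitly or, more economically, restructure as the paper does so that only variances of partial sums are needed.
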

Another possible application of Proposition~\ref{prop:cons-emp-cov-centered} is
the pointwise Kernel estimation of the spectral density $f$ wherever it is well
defined and smooth. Let $J$ denotes a two times continuously differentiable
function with support $[-1/2,1/2]$ and such that $\int J=1$.
For any $\bandwidth>0$ and $\lambda_0\in[-\pi,\pi]$, let $J_{\bandwidth,\lambda_0}$ denotes its
$\lambda_0$-shifted, $\bandwidth$-scaled and $(2\pi)$-periodic version: 
$$
J_{\bandwidth,\lambda_0}(\lambda)=\frac1{\bandwidth}\sum_{k\in\Z}J\left(\frac{\lambda-\lambda_0-2\pi
    k}{\bandwidth}\right)\;.
$$
Define the Kernel estimator of $f(\lambda_0)$
$$
\hat f_{n,\bandwidth}(\lambda_0)=\int_{\tore}I_n \, J_{\bandwidth,\lambda_0}\;.
$$
Let $\mu$ denote the spectral measure of $X$ and suppose that it admits a
density $f$ in the neighborhood of $\lambda_0$, 
and that this density is continuous at $\lambda_0$. Then, it is easy to show that
\begin{equation}
  \label{eq:kernel-conv-bias}
\lim_{\bandwidth\to0}\int J_{\bandwidth,\lambda_0}\,\rmd\mu=f(\lambda_0)
\end{equation}
and the rate of convergence as $\bandwidth\to0$ can be obtained from the smoothness
index of $f$ at $\lambda_0$. This deterministic limit can be interpreted as a
control on the bias of the estimator  $\hat f_{n,\bandwidth_n}(\lambda_0)$ of
$f(\lambda_0)$. The deviation is bounded by the following result. 
\begin{cor}\label{cor:spdens-estim}
  Let $\mu$ be the spectral density associated to the covariance function
  $r$. Define $I_n$ and $\hat r_n$ by~(\ref{eq:periodo-def})
  and~(\ref{eq:emp-cov-def}).  Let $J$ be a kernel function as above and define
  the kernel estimator $\hat f_{n,\bandwidth}$ accordingly.
  Then~(\ref{eq:empcov-cov-noncentered}) implies that there exists a constant
  $C''$ only depending on $C'$, $r(0)$ and $J$ such that, for any $\lambda_0\in[-\pi,\pi]$,
\begin{equation}
  \label{eq:kernel-conv-dev}
  \E\,\abs{\hat f_{n,\bandwidth}(\lambda_0)-\int J_{h,\lambda_0}\,\rmd\mu}\leq C''\,n^{-s_0/2}\,\bandwidth^{-1}\;.
\end{equation}
\end{cor}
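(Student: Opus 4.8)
The plan is to reduce everything to a comparison of Fourier coefficients. Writing $J_{\bandwidth,\lambda_0}(\lambda)=\sum_{m\in\Z}b_m\,\rme^{\rmi m\lambda}$ for its Fourier series, Parseval's formula turns the two quantities of interest into lag-domain sums: on the one hand $\int J_{\bandwidth,\lambda_0}\,\rmd\mu=\sum_{m\in\Z}b_m\,r(m)$, since $r(m)=\int\rme^{\rmi m\lambda}\,\rmd\mu(\lambda)$; on the other hand, using the standard identity $I_n(\lambda)=\frac1{2\pi}\sum_{|m|<n}\hat r_n(|m|)\,\rme^{-\rmi m\lambda}$ relating the periodogram to the empirical covariances, $\hat f_{n,\bandwidth}(\lambda_0)=\int_{\tore}I_n\,J_{\bandwidth,\lambda_0}=\sum_{|m|<n}\hat r_n(|m|)\,b_m$. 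Setting $\hat r_n(|m|)=0$ for $|m|\geq n$, the error becomes a single sum
\[
\hat f_{n,\bandwidth}(\lambda_0)-\int J_{\bandwidth,\lambda_0}\,\rmd\mu=\sum_{m\in\Z}\bigl(\hat r_n(|m|)\,\1_{|m|<n}-r(m)\bigr)\,b_m\;.
\]

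The first step is a uniform lag bound on the coefficients of this sum. For $|m|<n$, Corollary~\ref{cor:covar-estim} gives $\E\,\abs{\hat r_n(|m|)-r(m)}\le C'n^{-s_0/2}$; for $|m|\geq n$ the term reduces to $-r(m)$, and the covariance decay furnished by~\ref{item:hyp-consistency-common}, namely $|r(m)|\le C_0(1+|m|)^{-s_0}\le C_0 n^{-s_0}\le C_0 n^{-s_0/2}$, gives the same order. Hence, uniformly in $m$, $\E\,\abs{\hat r_n(|m|)\1_{|m|<n}-r(m)}\le \tilde C\,n^{-s_0/2}$ with $\tilde C=\max(C',C_0)$, and therefore
\[
\E\,\abs{\hat f_{n,\bandwidth}(\lambda_0)-\int J_{\bandwidth,\lambda_0}\,\rmd\mu}\le \tilde C\,n^{-s_0/2}\sum_{m\in\Z}|b_m|\;.
\]

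The second step is to control the $\ell^1$ norm of the Fourier coefficients, which is where the bandwidth factor and the smoothness of $J$ enter and which I expect to be the main obstacle. By the periodization formula, $b_m=\frac1{2\pi}\,\rme^{-\rmi m\lambda_0}\,\widehat J(m\bandwidth)$ where $\widehat J(\xi)=\int_\R J(u)\,\rme^{-\rmi\xi u}\,\rmd u$. Since $J$ is twice continuously differentiable with compact support, integrating by parts twice yields $|\widehat J(\xi)|\le \min\bigl(\|J\|_{L^1},\,\|J''\|_{L^1}\,\xi^{-2}\bigr)$. Splitting $\sum_{m}|b_m|=\frac1{2\pi}\sum_m|\widehat J(m\bandwidth)|$ at $|m|\le\bandwidth^{-1}$ (about $2\bandwidth^{-1}$ terms, each $\le\|J\|_{L^1}$) and $|m|>\bandwidth^{-1}$ (where $\sum_{|m|>\bandwidth^{-1}}\bandwidth^{-2}m^{-2}\lesssim\bandwidth^{-1}$) gives $\sum_m|b_m|\le C_J\,\bandwidth^{-1}$ with $C_J$ depending only on $J$. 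Combining the two steps yields $\E\,\abs{\hat f_{n,\bandwidth}(\lambda_0)-\int J_{\bandwidth,\lambda_0}\,\rmd\mu}\le \tilde C\,C_J\,n^{-s_0/2}\,\bandwidth^{-1}$, which is the claim with $C''=\tilde C\,C_J$.

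The one genuinely delicate point is this second step: obtaining the sharp $\bandwidth^{-1}$ rate (rather than a worse power of $\bandwidth^{-1}$) requires the two derivatives of $J$ to produce the quadratic decay of $\widehat J$, so that the tail $\sum_{|m|>\bandwidth^{-1}}(m\bandwidth)^{-2}$ contributes at the same $\bandwidth^{-1}$ order as the bulk term. Everything else is bookkeeping of constants; the dependence of $C''$ on $r(0)$ enters only through the covariance bounds applied to the lag-domain terms (in particular the $m=0$ term).
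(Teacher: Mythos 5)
Your proof is correct and follows essentially the same route as the paper: the paper's Lemma~\ref{lem:parseval-like-decomp} is exactly your Parseval/lag-domain decomposition, and the $\bandwidth^{-1}$ bound on $\sum_m|b_m|$ via the quadratic decay of $\widehat J$ is the paper's argument verbatim. The one genuine deviation is your treatment of the lags $|m|\geq n$: you invoke the covariance decay $|r(m)|\leq C_0(1+|m|)^{-s_0}$ from~\ref{item:hyp-consistency-common}, which is not part of the corollary's stated hypothesis (only~(\ref{eq:empcov-cov-noncentered}) is assumed) and makes your constant depend on $C_0$ rather than only on $C'$, $r(0)$ and $J$; the paper instead bounds the tail by $\sup_\lambda\bigl|J_{\bandwidth,\lambda_0}(\lambda)-\sum_{|k|<n}c_k\rme^{\rmi\lambda k}\bigr|\cdot(\hat r_n(0)+r(0))$ and uses $\E\,\hat r_n(0)\leq r(0)$, keeping the dependence as stated (at the price of a tail term of order $\bandwidth^{-2}n^{-1}$, which is only of the claimed order when $\bandwidth^{-1}=O(n^{1-s_0/2})$ --- a regime restriction your variant avoids).
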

As usual, the deviation bound~(\ref{eq:kernel-conv-dev}) (where $\bandwidth=\bandwidth_n$
should not converge to 0 at a rate faster than $n^{s_0/2}$) has to be balanced with the
convergence~(\ref{eq:kernel-conv-bias}) (where $\bandwidth=\bandwidth_n$ should
converge to 0, the faster the better).

\subsection{Parametric Whittle estimation}
\label{sec:param-whittle-estim}
Although $h^*$ is an unknown element in the infinite dimensional space $\cont$,
our approach is parametric in nature in the sense that we now assume that
$(d^*,h^*)$ belongs to a known compact subset $K$ of $[0,1/2]\times\cont$. In
practice, to get a good approximation of an element of $\cont$, only a finite
number of its Fourier coefficients needs to be estimated. More generally we
denote by $(K_n)$ a sequence of subsets of $K$ in which we can always find
$(d^*,h_n^*)$ such that $h^*_n$ approximates $h^*$ well for $n$ large. More
precisely we consider the following assumption.
\begin{enumerate}[resume*=hyp]
\item\label{item:hyp-consistency} Let $K$ be a compact subset of
  $[0,1/2]\times\cont$ such that, for all $(d,h)\in K$, $f_d+h>0$ on
  $\rset$, and
  let $(K_n)$ be a sequence of subsets of $K$ such that for a well chosen
  sequence $(h^*_n)\in\cont^\N$, we have $(d^*,h^*_n)\in K_n$ for all $n\in\N$ and
  $h^*_n$ converges to $h^*$ uniformly.
\end{enumerate}
\begin{Rem}\label{rem:hn-Kn}
  If $h\in K$ is parameterized by
  finitely many parameters, one can take  $K_n=K$ for all $n\geq1$,
  in which case the last assertion of~\ref{item:hyp-consistency} is immediately
  satisfied for all $(d^*,h^*)\in K$ by taking $h^*_n=h^*$ for all $n$. 
\end{Rem}
An infinite dimensional setting can be set up as follows.  For any $s,C>0$, let
$H(s,C)$ denote the ball of even, real and locally integrable $(2\pi)$-periodic
functions $h:\R\to\R$ such that the Fourier coefficients
$$
c_k(h)=\intperarg{h(\lambda)\,\rme^{\rmi\lambda k}}{\lambda}
\quad\text{satisfy}\quad  
\left|c_k(h)\right|\leq C\,(1+|k|)^{-1-s}\;,\qquad k\in\Z\,.
$$
For any non-negative integer $m$, let moreover $\mathcal{P}_m$ denote the set
of even real trigonometric polynomials of degree at most $m$. For any locally
integrable $(2\pi)$-periodic function $h$, denote by $p_m[h]$ the
projection of $h$ onto $\mathcal{P}_m$, that is,
$$
p_m[h](\lambda)=c_0(h)+\sum_{k=1}^m2\,c_k(h)\,\cos(\lambda\,k)\;,\quad \lambda\in\rset\;.
$$
For any $s,C>0$, it is easy to show that $\sup|h-P_m[h]|={\cal O}(m^{-s})$
uniformly in $h\in H(s,C)$ as $m\to\infty$.  The following result can be used
to build a parameter space $K$ and a sequence $(K_n)$
satisfying~\ref{item:hyp-consistency} from a given set
$A\subset[0,1/2]\times H(s,C)$ of couples $(d,h)$ containing the true
parameters.

\begin{lem}\label{lem:Kn-infinite-dim}
  Let $s,C>0$ and $A\subseteq[0,1/2]\times H(s,C)$ such that $f_d+h>0$ on
  $\rset$ for all $(d,h)\in A$. Suppose that $A$ is closed
  in $[0,1/2]\times\cont$ and let $(d^*,h^*)\in A$. Then there exists a
  positive integer $m_0$ such that $f_d+p_m[h]>0$ on
  $\rset$ for all $(d,h)\in A$ and $m\geq m_0$. Moreover, for any diverging
  sequence $(m_n)$ of integers larger than or equal to $m_0$,
  Assumption~\ref{item:hyp-consistency} holds 
  by setting $K_n=\{(d,p_{m_n}[h])~:~(d,h)\in A\}$, for all $n\geq1$ and
  $K=A\cup(\bigcup_n K_n)$. 
\end{lem}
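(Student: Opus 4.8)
The plan is to reduce both assertions to a single uniform positivity estimate on $A$ and then to exploit the uniform approximation bound $\sup_{h\in H(s,C)}\|h-p_m[h]\|_\infty=\mathcal{O}(m^{-s})$ recalled just above the statement. The first step I would take is to establish that $A$ is \emph{compact} in $[0,1/2]\times\cont$. Each $h\in H(s,C)$ satisfies $\|h\|_\infty\le\sum_k|c_k(h)|\le C\sum_k(1+|k|)^{-1-s}<\infty$, and the same summable coefficient bound furnishes a common modulus of continuity, so by the Arzel\`a--Ascoli theorem $H(s,C)$ is relatively compact in $\cont$; it is also closed, since each coefficient functional $h\mapsto c_k(h)$ is continuous for the sup norm. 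Hence $[0,1/2]\times H(s,C)$ is compact, and $A$, being closed in $[0,1/2]\times\cont$ and contained in it, is compact.

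Next I would prove that $\delta_0:=\inf_{(d,h)\in A}\inf_{\lambda}\bigl(f_d(\lambda)+h(\lambda)\bigr)>0$. For each fixed $(d,h)\in A$ this infimum is positive: off the origin $f_d+h$ is continuous with $f_d$ bounded, while near the origin $f_d$ blows up when $d>0$ and equals the constant $1/(2\pi)$ when $d=0$. To make the bound uniform I would argue by contradiction. If $\delta_0=0$, choose $(d_n,h_n)\in A$ and $\lambda_n$ with $(f_{d_n}+h_n)(\lambda_n)\to0$, and by compactness extract $(d_n,h_n)\to(d,h)\in A$ and $\lambda_n\to\lambda_\ast$. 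If $\lambda_\ast\neq0$, the joint continuity of $(d,\lambda)\mapsto f_d(\lambda)$ off the origin and the uniform convergence $h_n\to h$ give the limit $(f_d+h)(\lambda_\ast)>0$, a contradiction. If $\lambda_\ast=0$ and $d>0$, then $f_{d_n}(\lambda_n)=\tfrac1{2\pi}(2|\sin(\lambda_n/2)|)^{-2d_n}\to+\infty$ while $h_n(\lambda_n)$ stays bounded, again a contradiction. Finally, if $\lambda_\ast=0$ and $d=0$, the elementary bound $f_{d_n}(\lambda_n)\ge\tfrac1{2\pi}2^{-2d_n}\to\tfrac1{2\pi}$ together with $h_n(\lambda_n)\to h(0)$ and $\tfrac1{2\pi}+h(0)>0$ (since $(0,h)\in A$) yields $\liminf(f_{d_n}+h_n)(\lambda_n)>0$, the last contradiction. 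With $\delta_0>0$ secured, the first assertion is immediate: picking $m_0$ so that $\sup_{g\in H(s,C)}\|g-p_m[g]\|_\infty<\delta_0$ for all $m\ge m_0$, we obtain, pointwise and for every $(d,h)\in A$ and $m\ge m_0$, $f_d+p_m[h]=(f_d+h)+(p_m[h]-h)\ge\delta_0-\sup_{g\in H(s,C)}\|g-p_m[g]\|_\infty>0$.

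For the second part I would verify Assumption~\ref{item:hyp-consistency} for $K_n=\{(d,p_{m_n}[h]):(d,h)\in A\}$ and $K=A\cup\bigcup_n K_n$. Positivity of $f_d+h$ holds on $A$ by hypothesis and on each $K_n$ by the first part (as $m_n\ge m_0$); the inclusion $K_n\subseteq K$ is immediate from the definition of $K$; and the approximating sequence is $h^\ast_n=p_{m_n}[h^\ast]$, which lies in $K_n$ because $(d^\ast,h^\ast)\in A$ and converges uniformly to $h^\ast$ by the $\mathcal{O}(m_n^{-s})$ bound. The one genuinely delicate point is the compactness of $K$, which I would prove by sequential compactness. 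Given a sequence in $K$: if infinitely many terms lie in $A$, or in a single $K_n$, their compactness gives a convergent subsequence with limit in $K$; the new case is a sequence $(d_j,p_{m_{n_j}}[h_j])$ with $m_{n_j}\to\infty$.

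The main obstacle is precisely this last case, where the naive operator bound fails because $\|p_m\|$ grows with $m$. I would circumvent it by passing to a subsequence with $(d_j,h_j)\to(d,h)\in A$ and splitting
$$
\|p_{m_{n_j}}[h_j]-h\|_\infty\le\|p_{m_{n_j}}[h_j]-h_j\|_\infty+\|h_j-h\|_\infty\;,
$$
bounding the first term by $\sup_{g\in H(s,C)}\|g-p_{m_{n_j}}[g]\|_\infty=\mathcal{O}(m_{n_j}^{-s})\to0$, \emph{uniformly over} $H(s,C)$, and the second by the uniform convergence $h_j\to h$; thus $(d_j,p_{m_{n_j}}[h_j])\to(d,h)\in A\subseteq K$. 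This uniform-in-$H(s,C)$ approximation bound is what rescues the argument and is the crux of the proof, the positivity near the frequency origin being the only other place that requires care.
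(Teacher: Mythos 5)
Your proof is correct and follows essentially the same route as the paper's: compactness of $H(s,C)$ (hence of $A$) via Arzel\`a--Ascoli, a uniform lower bound $\inf_{(d,h)\in A}\inf(f_d+h)>0$, the uniform-in-$H(s,C)$ bound $\sup|h-p_m[h]|=\mathcal{O}(m^{-s})$ to get $m_0$, and a sequential-compactness argument for $K$ that handles the case $m_{n_j}\to\infty$ exactly as the paper's parenthetical remark does. The only cosmetic difference is that you re-derive the uniform positivity by the case analysis at $\lambda_\ast=0$ inline, whereas the paper delegates this to its Lemma~\ref{lem:pos-param-dens} (whose proof is the same case analysis, with your three cases being its Cases 1, 3 and 4).
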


The proof of this lemma is postponed to Section~\ref{sec:cons-param-whittle}.
We can now state the consistency of our estimator which, in the same flavor as
in \cite[Theorem~8.2.1]{gir2012}, only requires the observed process to be
ergodic. Its proof is also postponed to Section~\ref{sec:cons-param-whittle}. 
\begin{theo}\label{theo:cons}
  Suppose that the process $X$ satisfies~\ref{item:hyp-consistency-ergo} and admits a spectral density
  of the form~(\ref{eq:sp-dens-general}) with parameter $(c^*,d^*,h^*)$
  satisfying~\ref{item:hyp-consistency} for some subsets $K$ and $(K_n)$ 
  of $[0,1/2]\times\cont$.
  
  Let $(\hat d_{n},\hat h_{n})\in K_n$ such that, a.s., as $n\to\infty$,
  \begin{equation}
    \label{eq:def-param-edstim}
  \Lambda_n(\hat d_{n},\hat h_{n}) \leq \inf_{(d,h)\in K_n}  \Lambda_n(d,h)+o(1)\;,
\end{equation}
where $\Lambda_n$ is defined by~(\ref{eq:whittle-contrast}),
 and define $\hat c_{n}$ by~(\ref{eq:c-parm-estim}). Then, a.s.,  $\hat d_{n}$
 and $\hat c_{n}$ converge to $d^*$ and $c^*$, and $\hat h_{n}$ converges to
 $h^*$ uniformly. 
\end{theo}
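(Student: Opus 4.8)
The plan is to treat~(\ref{eq:def-param-edstim}) as a standard argmin ($M$-estimation) problem driven by the Whittle contrast~(\ref{eq:whittle-contrast}), following the template of~\cite[Theorem~8.2.1]{gir2012}. First I would introduce the deterministic limiting contrast
\[
\Lambda(d,h)=\ln\left(\intper{\frac{f}{f_d+h}}\right)+\intper{\ln\left(f_d+h\right)}\;,
\]
obtained by replacing $I_n$ in~(\ref{eq:whittle-contrast}) by the true spectral density $f$ of~(\ref{eq:sp-dens-general}). The argument then rests on two ingredients: (a) an identification result, stating that $\Lambda$ has a strict global minimum over $K$ at $(d^*,h^*)$, and (b) an ergodic convergence result, stating that almost surely $\Lambda_n(d_n,h_n)\to\Lambda(d',h')$ whenever $(d_n,h_n)\to(d',h')$ in $K$. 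Granting these, consistency is routine: using $(d^*,h^*_n)\in K_n$ together with~(\ref{eq:def-param-edstim}) gives $\limsup_n\Lambda_n(\hat d_n,\hat h_n)\le\lim_n\Lambda_n(d^*,h^*_n)=\Lambda(d^*,h^*)$ (the last limit being the instance of (b) with $d$ fixed at $d^*$ and $h^*_n\to h^*$, which is clean since only $h$ moves). As $K$ is compact, it suffices to show that every limit point $(d',h')$ of $(\hat d_n,\hat h_n)$ equals $(d^*,h^*)$; extracting a convergent subsequence and invoking (b) yields $\Lambda(d',h')\le\Lambda(d^*,h^*)$, hence $(d',h')=(d^*,h^*)$ by (a). The convergence of $\hat h_n$ is then uniform, since the topology on $\cont$ is the sup norm.

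For the identification (a), I would substitute $f=c^*(f_{d^*}+h^*)$ and compute, with $\psi=(f_{d^*}+h^*)/(f_d+h)$,
\[
\Lambda(d,h)-\Lambda(d^*,h^*)=\ln\left(\intper{\psi}\right)-\intper{\ln\psi}\;,
\]
which is nonnegative by Jensen's inequality applied to the concave $\ln$ and the probability measure $\intpermu$, with equality if and only if $\psi$ is $\intpermu$-a.e.\ constant, i.e.\ $f_d+h=\kappa\,(f_{d^*}+h^*)$ for some $\kappa>0$. It then remains to show that this proportionality forces $(d,h)=(d^*,h^*)$, and this is where the explicit form~(\ref{eq:arfima}) enters: since $f_d(\lambda)\sim\frac1{2\pi}|\lambda|^{-2d}$ while $h$ stays bounded as $\lambda\to0$, matching the order of the singularity at the origin on both sides forces $d=d^*$, and then $\kappa=1$ and $h=h^*$.

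The ergodic convergence (b) is the main obstacle. The deterministic term $\intper{\ln(f_d+h)}$ is continuous in $(d,h)$ by dominated convergence (using $f_d+h\ge\epsilon>0$ on the compact set $K$), so everything reduces to the almost sure convergence of $\intper{I_n/(f_d+h)}$ to $\intper{f/(f_d+h)}$. For a \emph{fixed} continuous integrand this follows from the Birkhoff ergodic theorem, which gives $\widehat r_n(k)\to r(k)$ and $\intper{I_n}\to r(0)$ almost surely: approximating $1/(f_d+h)$ uniformly by trigonometric polynomials (Weierstrass), the polynomial part converges by the ergodic theorem while the remainder is controlled uniformly in $n$ by $\|\,\cdot\,\|_\infty\,\intper{I_n}$. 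The delicate point is to extend this along a moving sequence $(d_n,h_n)\to(d',h')$, i.e.\ a stochastic equicontinuity estimate for $(d,h)\mapsto\intper{I_n/(f_d+h)}$ uniform in $n$. The map $(d,h)\mapsto 1/(f_d+h)$ is continuous for the sup norm away from the origin but, because of the interplay between $f_d$ blowing up and $d\to0$, \emph{not} uniformly so near $\lambda=0$. I would therefore split the frequency domain, treating $[-\pi,\pi]\setminus(-\delta,\delta)$ by genuine equicontinuity, and the band $(-\delta,\delta)$ by the crude bound $1/(f_d+h)\le1/\epsilon$ combined with the smallness of $\intper{I_n\,\1_{(-\delta,\delta)}}$, which converges to the integrable tail $\intper{f\,\1_{(-\delta,\delta)}}$ (small for small $\delta$), and finally letting $\delta\to0$.

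Once $(\hat d_n,\hat h_n)\to(d^*,h^*)$ is established, the convergence $\hat c_n\to c^*$ follows from~(\ref{eq:c-parm-estim}) and the same ergodic argument: $\hat c_n=\intper{I_n/(f_{\hat d_n}+\hat h_n)}\to\intper{f/(f_{d^*}+h^*)}=c^*\,\intper{1}=c^*$, whence the estimator $\hat f_n=\hat c_n\,(f_{\hat d_n}+\hat h_n)$ of the spectral density is consistent as well.
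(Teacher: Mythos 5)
Your proposal is correct in outline and follows the same classical Wald/argmin consistency scheme as the paper, but the key uniformity step is implemented quite differently. You prove a continuous-convergence statement ($\Lambda_n(d_n,h_n)\to\Lambda(d',h')$ along any convergent sequence in $K$) via a stochastic equicontinuity argument, splitting frequencies into $\{|\lambda|\geq\delta\}$ (where $(d,h,\lambda)\mapsto 1/(f_d+h)$ is genuinely uniformly continuous) and $\{|\lambda|<\delta\}$ (handled by the crude bound $1/(f_d+h)\leq 1/a_K$ together with the smallness of $\intper{I_n\1_{(-\delta,\delta)}}$), and then run a subsequence argument on the compact $K$. The paper never proves continuous convergence of $\Lambda_n$: it instead establishes the two one-sided bounds $\limsup_n\Lambda_n(d^*,h^*_n)\leq\Lambda^*$ and $\liminf_n\inf_{K_0}\Lambda_n>\Lambda^*$ directly, using the monotone envelopes $\underline{f}_{(d,\epsilon)}\leq f_{d'}\leq\overline{f}_{(d,\epsilon)}$ of~(\ref{eq:functional-bounds}) to bound $\Lambda_n$ over small balls by functionals with \emph{fixed} continuous integrands (to which the ergodic theorem applies), a Lipschitz-in-$h$ lemma for the truncated denominators $(f_d+h)\vee a_K$, and a finite covering of $K_0$ with a uniform Jensen gap $\eta$. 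The two routes buy roughly the same thing; yours is closer to the textbook $M$-estimation template and arguably more transparent, while the paper's envelope trick sidesteps the need to quantify equicontinuity in $d$ near $\lambda=0$, which is exactly where your map fails to be uniformly continuous and where your $\delta$-truncation does the work instead.

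Two points you take for granted deserve attention. First, the uniform positivity $a_K:=\inf_{(d,h)\in K}\inf_\lambda(f_d+h)>0$, which you use both for the crude bound near the origin and for the dominated-convergence steps, does \emph{not} follow immediately from compactness of $K$ and pointwise positivity, because $(d,h,\lambda)\mapsto f_d(\lambda)+h(\lambda)$ is not continuous at points with $\lambda=0$; the paper devotes Lemma~\ref{lem:pos-param-dens} (a four-case analysis) to this, and you should cite or reprove it. Second, in the identification step your conclusion ``$\kappa=1$ and $h=h^*$'' from $f_d+h=\kappa(f_{d^*}+h^*)$ uses the unboundedness of $f_{d^*}$ at the origin and hence requires $d^*>0$; when $d^*=0$ the parametrization can in principle admit proportional competitors inside $K$. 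The paper's own strict-Jensen step has the same blind spot (it only invokes ``do not coincide almost everywhere'', which is weaker than non-proportionality), so this is a shared caveat rather than a defect specific to your argument, but it is worth flagging when $d^*=0$ is allowed.
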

Assumption~\ref{item:hyp-consistency} provides a new framework of parametric
models, different from the ones classically used in Whittle parameter
estimation, and which seems to be well adapted for many examples of trawl
processes, see Section~\ref{sec:example-trawl}. However it also includes
many known cases. Let us examine the celebrated ARFIMA model, in which the
spectral density takes the form
\begin{equation}
  \label{eq:arfima-dens}
f(\lambda)=\sigma_*^2\,f_{d^*}(\lambda)\,\left|\frac{1+\sum_{k=1}^p\theta^*_k\rme^{-\rmi k\lambda}}{1-\sum_{k=1}^q\phi^*_k\rme^{-\rmi k\lambda}}\right|^2\;,  
\end{equation}
where, for some positive integers
$p$ and $q$,  the MA and AR coefficients
$\theta^*=(\theta^*_1,\dots,\theta^*_p)$ and $\phi^*=(\phi^*_1,\dots,\phi^*_q)$ are assumed to make the corresponding
ARMA process canonical. In the following this will be denoted by
$(\phi,\theta)\in\Theta_{p,q}$, defined by
\begin{multline}
  \label{eq:ThetaCanonical}
\Theta_{p,q}=\left\{(\phi,\theta)\in\rset^{p+q}~:~\text{$\Phi$ and $\Theta$
    have no common roots}\right.\\
\left.    \text{ and for all $z\in\cset$ such that $|z|\leq1$,}\Phi(z)\neq0\text{ and } \Theta(z)\neq0\right\}  \;,
\end{multline}
where $\Phi$ and $\Theta$ are the 
AR and MA polynomials defined by 
$$
\Phi(z):=1-\sum_{k=1}^p\phi_kz^k\text{ and }
\Theta(z):=1+\sum_{k=1}^q\theta_kz^k\;.
$$
The corresponding reduced Whittle contrast reads
\begin{equation}
  \label{eq:whittle-contrast-arfima}
\widetilde\Lambda_n(d,(\phi,\theta)) =
\ln\left(\intperarg{\frac{I_n(\lambda)}
  {f_d(\lambda)}\;\frac{|\Phi(\rme^{-\rmi\lambda})|^2}{|\Theta(\rme^{-\rmi\lambda})|^2}}{\lambda} \right)
\; ,
\end{equation}
where $f_{d}$ is defined by~(\ref{eq:arfima}) and $\intpermu$ is the Lebesgue
measure on $[-\pi,\pi]$ divided by $2\pi$.  The form~(\ref{eq:arfima-dens}) is
in fact a special case of~(\ref{eq:sp-dens-general}) by setting
\begin{align}
  \label{eq:hdtheta-arfima}
h^*(\lambda)&=f_{d^*}(\lambda)\left(\left|\frac{\Theta(\rme^{-\rmi\lambda})\,\Phi(1)}{\Phi(\rme^{-\rmi\lambda})\,\Theta(1)}\right|^2-1\right)\\
    \label{eq:csigma2-arfima}
  c^*&=\sigma_*^2\,\left|\frac{\,\Theta(1)}{\Phi(1)}\right|^2\;.
\end{align}
Note that $h^*$ is indeed continuous.
The ARFIMA linear processes have been
extensively studied. However the usual proof of the consistency relies on the
Hannan's approach of \cite{hannan1973} but it does not hold if $d=0$ is
included in the set of parameters. 
Here, as a consequence of
Theorem~\ref{theo:cons}, we get the following, which provides an
alternative proof.
\begin{cor}\label{cor:arfima}
Let $p,q$ be
  two positive integers and $\tilde K$ be a compact subset of
  $[0,1/2)\times\Theta_{p,q}$.   Suppose that the process $X$ satisfies~\ref{item:hyp-consistency-ergo} and
  admits a spectral density of the form~(\ref{eq:arfima-dens}), with
  $(d^*,(\phi^*,\theta^*))\in\tilde K$ and $\sigma_*>0$. 
  
  Let $(\hat d_n,\hat\vartheta_n)\in\tilde K$ such that, a.s., as $n\to\infty$,
  \begin{equation}
    \label{eq:def-param-edstim-arfima}
  \widetilde\Lambda_n(\hat d_n,\hat\vartheta_n)\leq\inf_{(d,\vartheta)\in\tilde
  K}  \widetilde\Lambda_n(d,\vartheta)+o(1)\;,    
  \end{equation}
  where $\widetilde\Lambda_n$ is defined by~(\ref{eq:whittle-contrast-arfima}).
  Define moreover
\begin{align}
    \label{eq:sigma-parm-estim-arfima}
\hat\sigma^2_n=\exp\left(\widetilde\Lambda_n(\hat d_n,\hat\vartheta_n) \right)\;.
\end{align}
Then, a.s.,  $\hat d_{n}$, $\hat \vartheta_{n}$ and $\hat\sigma^2_n$ converge to $d^*$,
 $\vartheta^*=(\phi^*,\theta^*)$ and $\sigma_*^2$.
\end{cor}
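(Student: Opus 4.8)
The plan is to deduce the corollary from Theorem~\ref{theo:cons} through the explicit reparametrization recorded in~\eqref{eq:hdtheta-arfima}--\eqref{eq:csigma2-arfima}. Define the map $\psi$ sending $(d,(\phi,\theta))\in[0,1/2)\times\Theta_{p,q}$ to $(d,h_{d,\phi,\theta})$, where $h_{d,\phi,\theta}$ is the right-hand side of~\eqref{eq:hdtheta-arfima} with $(d^*,\phi^*,\theta^*)$ replaced by $(d,\phi,\theta)$, so that $f_d+h_{d,\phi,\theta}=f_d\,g$ with $g=\abs{\Theta(\rme^{-\rmi\lambda})\,\Phi(1)/(\Phi(\rme^{-\rmi\lambda})\,\Theta(1))}^2$. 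First I would check that $\psi$ takes values in $[0,1/2)\times\cont$: the factor $g$ is smooth, strictly positive and even with $g(0)=1$, hence $g-1={\cal O}(\lambda^2)$ near the origin and $h_{d,\phi,\theta}=f_d\,(g-1)={\cal O}(\abs{\lambda}^{2-2d})$ remains bounded and continuous at $0$ despite the blow-up of $f_d$ (this is where $d<1/2$ is used). On the compact set $\tilde K$, where $d$ stays bounded away from $1/2$ and $\Phi,\Theta$ stay bounded away from having roots on the unit circle, these bounds are uniform, so $\psi$ is continuous into $\cont$ for the sup norm; being a continuous injection from a compact set, with injectivity provided by the identifiability of canonical ARMA spectra (the defining property of $\Theta_{p,q}$), $\psi$ is a homeomorphism onto its image.

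Next I would show that the two contrasts agree up to an additive constant. Since $f_d+h_{d,\phi,\theta}=f_d\,g$,
$$
\Lambda_n(\psi(d,(\phi,\theta)))=\ln\left(\intper{\frac{I_n}{f_d\,g}}\right)+\intper{\ln(f_d\,g)}\;.
$$
Writing $1/g=\abs{\Phi(\rme^{-\rmi\lambda})/\Theta(\rme^{-\rmi\lambda})}^2\,\abs{\Theta(1)/\Phi(1)}^2$ identifies the first term as $\widetilde\Lambda_n(d,(\phi,\theta))+\ln\abs{\Theta(1)/\Phi(1)}^2$. For the second term, the classical identities $\intper{\ln\abs{1-\rme^{-\rmi\lambda}}}=0$ and $\intper{\ln\abs{\Phi(\rme^{-\rmi\lambda})}}=\intper{\ln\abs{\Theta(\rme^{-\rmi\lambda})}}=0$ (valid because $\Phi,\Theta$ have no root in the closed unit disk) give $\intper{\ln f_d}=-\ln(2\pi)$ and $\intper{\ln g}=\ln\abs{\Phi(1)/\Theta(1)}^2$. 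The two contributions $\pm\ln\abs{\Theta(1)/\Phi(1)}^2$ cancel, leaving the exact identity $\Lambda_n\circ\psi=\widetilde\Lambda_n-\ln(2\pi)$ on $\tilde K$.

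The corollary now follows. Put $K=\psi(\tilde K)$ and $K_n=K$; then $K$ is compact in $[0,1/2]\times\cont$, satisfies $f_d+h>0$, and since $h$ is finitely parametrized, Remark~\ref{rem:hn-Kn} shows that~\ref{item:hyp-consistency} holds with $h^*_n=h^*$. Because $\Lambda_n\circ\psi$ and $\widetilde\Lambda_n$ differ by a constant and $\psi$ maps $\tilde K$ onto $K$, the image $(\hat d_n,\hat h_n):=\psi(\hat d_n,\hat\vartheta_n)$ is a near-minimizer of $\Lambda_n$ over $K$ in the sense of~\eqref{eq:def-param-edstim}. Theorem~\ref{theo:cons} then gives, a.s., $\hat d_n\to d^*$, $\hat h_n\to h^*$ uniformly and $\hat c_n\to c^*$. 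Applying the continuous inverse $\psi^{-1}$ transfers the convergence of $(\hat d_n,\hat h_n)$ into $\hat\vartheta_n\to\vartheta^*$; finally, comparing~\eqref{eq:c-parm-estim} and~\eqref{eq:sigma-parm-estim-arfima} through the same factorization $f_{\hat d_n}+\hat h_n=f_{\hat d_n}\,\hat g$ yields $\hat\sigma^2_n=\hat c_n\,\abs{\hat\Phi(1)/\hat\Theta(1)}^2$, which converges to $c^*\,\abs{\Phi(1)/\Theta(1)}^2=\sigma_*^2$ by~\eqref{eq:csigma2-arfima} and the nonvanishing of $\Phi,\Theta$ at $1$.

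The main obstacle is the first step: verifying that $\psi$ is continuous into $\cont$ with the sup norm --- taming the singularity of $f_d$ at the origin by the quadratic vanishing of $g-1$, uniformly over $\tilde K$ --- and that it is injective, so that the sup-norm convergence of $\hat h_n$ can be pushed back to convergence of the ARMA coefficients $\hat\vartheta_n$. The contrast reduction and the application of Theorem~\ref{theo:cons} are then essentially mechanical.
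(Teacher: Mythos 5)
Your proposal is correct and follows essentially the same route as the paper: reparametrize via $(d,\vartheta)\mapsto(d,f_d\mathrm{R}(\vartheta))$, identify the two contrasts up to an additive constant, check~\ref{item:hyp-consistency} with $K_n=K=\Psi(\tilde K)$ via Remark~\ref{rem:hn-Kn}, apply Theorem~\ref{theo:cons}, and pull the convergence back through the (bi-)continuous reparametrization, converting $\hat c_n$ into $\hat\sigma^2_n$ by the factor $\abs{\Phi(1)/\Theta(1)}^2$. The only departures are cosmetic: you obtain the inverse's continuity from the compact-to-Hausdorff homeomorphism argument rather than the paper's explicit argmin construction of $\mathrm{R}^{-1}$, and your bookkeeping of the constant ($\Lambda_n\circ\psi=\widetilde\Lambda_n-\ln(2\pi)$) is in fact more careful than the paper's stated exact equality, which is harmless since only the constant offset matters.
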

\begin{proof}
See Section~\ref{sec:arfima-case:-proof}.
\end{proof}

\section{Examples of discrete time trawl processes}
\label{sec:example-trawl}
\subsection{Random line seed}

As explained in \cite[Example~1]{DJLS-spa}, any causal linear process is a trawl
process by setting the seed process to be the random line seed
$\gamma(t)=t\epsilon$, where $\epsilon$ is a random variable with zero mean and
finite variance.

The parametric estimation in the linear case is a well known topic, usually
treated using ARFIMA parametrization, see e.g. \cite[Section~8.3.2]{gir2012} for a
complete statistical analysis of this model.

\subsection{Lévy seed and non-increasing sequence}
\label{sec:examples}

Consider the two following assumptions
\begin{enumerate}[resume*=hyp]
\item\label{item:hyp-gamma} The process $\gamma$ is a L{\'e}vy process with finite
  variance normalized so that $ \v \,\gamma(1)=1$.
\item \label{item:hyp-a} The sequence $a$ is non-increasing and there exist $
  c > 0$, and $\alpha^*>1$ such that~(\ref{g4}) holds.
\end{enumerate}
They imply~(\ref{eq:hyp-a1}) since then we have, for all $t\geq0$, $\E\,\gamma(t)=\delta\,t$ for some drift
$\delta$ and  $ \v \,\gamma(t)=t$. By Lemma~\ref{lemme1} and
Eq.~(\ref{eq:covariance-trawl}), the trawl process $X$ defined by~(\ref{g1})
satisfies~\ref{item:hyp-consistency-ergo} and its auto-covariance function
  $r$ is given by
\begin{equation} \label{rk-exact-formula}
  r(k)=\sum_{j\geq k}a_j\;,\qquad k\in\nset\;.
\end{equation}
If \ref{item:hyp-gamma}
and~\ref{item:hyp-a} hold and $\gamma(1)$ admits a finite
$q$-th moment, we easily have that, for all $t_1\leq \dots\leq t_q$ in $\Z$,
\begin{align*}
\cum{X_{t_1}\ ,\ \dots\ ,\
  X_{t_q}}&=\sum_{j\geq0}\cum{\gamma(a_{t_k-t_1+j}),\
            k=1,\dots,q}\\
          &=\kappa_q\;\sum_{k\geq t_q-t_1}a_{k}\;,  
\end{align*}
where $\kappa_q$ is the $q$-th order cumulant of $\gamma(1)$. 
We then obtain 
\begin{align*}
\cum{X_{t_1}\ ,\ X_{t_2}\ ,\ X_{t_3}\ ,\
  X_{t_4}}&={\cal O}((t_4-t_1)^{1-\alpha^*})\;.
\end{align*}
So, by Lemma~\ref{lem:cons-emp-cov-centered}, if $q=4$, $X$
satisfies~\ref{item:hyp-consistency-common} with $s_0=\alpha^*-1$.
Theorem~\ref{theo:trawl-gen-ass} shows that
Condition~\ref{item:hyp-consistency-common} continues to hold for more general
trawl processes, provided some adequate moment conditions, but with $s_0$
possibly higher than $\alpha^*-1$ (see Section~\ref{sec:other-seeds} for
examples).

For such a process, we can specify $(a_k)$ so that the spectral density is of
the form~(\ref{eq:sp-dens-general}) with $(d^*,h^*)$ lying within a parameter
space $K$ satisfying Condition~\ref{item:hyp-consistency}.
A very special case, detailed in \cite[Example~5]{DJLS-spa}, consists in setting
\begin{equation}
  \label{eq:farim-seq-ak}
a_k=c^*\,\left(r_k^{(d^*)}-r_{k+1}^{(d^*)}\right)\,,\qquad k\in\N\;,
\end{equation}
where, for all $d<1/2$,  $r^{(d)}$ is defined as the auto-covariance function of ARFIMA$(0,d,0)$ with unit variance
innovation, that is,
\begin{equation}
  \label{eq:farim-cov}
r^{(d)}(k)=\intperarg{\left|1-\rme^{-\rmi\lambda}\right|^{-2d}\rme^{\rmi\lambda k}}{\lambda}\,,\qquad k\in\Z\;.  
\end{equation}
It is shown in \cite{DJLS-spa} that, for any $d^*\in(0,1/2)$ such a sequence $(a_j)$
satisfies~\ref{item:hyp-a} with $\alpha^*=2(1-d^*)\in(1,2)$, so that, under~\ref{item:hyp-gamma},
following~(\ref{rk-exact-formula}) and~(\ref{eq:farim-cov}), the corresponding
trawl process has a spectral density of the form~(\ref{eq:sp-dens-general})
with $h^*=0$.

We check in the following section that more general seed processes and trawl
sequences can be used.

\subsection{More general seeds and sequences}
\label{sec:other-seeds}

In this section, in contrast to~\ref{item:hyp-a}, we consider trawl sequences $(a_j)$ that may not be
non-increasing but we specify~(\ref{g4}) by assuming that, there exists $c>0$
and $\alpha^*\in(1,2)$ such that
\begin{equation} 
  \label{eq:g4-bis}
0\leq    a_j \ = \  c \ j^{-\alpha^*} (1+ O(j^{-1}))\quad\text{as}\quad j \to \infty  \;.
\end{equation}
We also consider the non Lévy seed processes introduced in
\cite{DJLS-spa}, for which the covariance structure can still be derived precisely.
Let us examine here the mixed Poisson seed and the Binomial seed processes of
their Examples 3 and 4. The first case extends the (thus Lévy) Poisson seed by
setting $\gamma(t)=N(\zeta \,t)$, where $N$ is a homogeneous Poisson counting
process with unit intensity and $\zeta$ is a positive random variable
independent of $N$ and with finite variance. Then we have, for all $u,v\geq0$,
$\E\,\gamma(u)=u\,\E\zeta$ and
$\cov(\gamma(u),\gamma(v))=(u\wedge v)\E\zeta+uv\var(\zeta)$.
Thus, for any
sequence $(a_j)$ satisfying~(\ref{eq:g4-bis}), Condition~(\ref{eq:hyp-a1})
holds and~\ref{item:hyp-consistency-ergo} follows from Lemma~\ref{lemme1} and
Eq.~(\ref{eq:covariance-trawl}) yields the following auto-covariance function
for $X$~:
$$
r(k)=\E\zeta\,\sum_{j=0}^\infty (a_j\wedge
a_{j+k})+\var(\zeta)\sum_{j=0}^\infty a_ja_{j+k}\;,\qquad k\in\N\;.
$$
If moreover $\E\zeta^6<\infty$, then~(\ref{eq:hyp-a1-p}) holds with $p=3$, and,
by Theorem~\ref{theo:trawl-gen-ass}~\ref{item:trawl-gen-ass2}, we
get~\ref{item:hyp-consistency-common} with $s_0=(\alpha^*-1)/2$. If we only
assume that $\E\zeta^4<\infty$,
then~(\ref{eq:cond-weak-dep-consistency-commen-p2add}) holds with $p=2$ and
$s_0=(\alpha^*-1)/4$, so that
Theorem~\ref{theo:trawl-gen-ass}~\ref{item:trawl-gen-ass1} gives
that~\ref{item:hyp-consistency-common} holds this time only with
$s_0=(\alpha^*-1)/4$.

The Binomial seed process of \cite[Example~4]{DJLS-spa} is defined for some given
$n\in\nset^*$ by setting $\gamma(t)=\sum_{i=1}^n\1_{\{U_i\leq t\}}$ with the
$U_i$'s i.i.d. and uniform on $[0,1]$. In this case, we have that, for all
$u\geq1$, $\gamma(u)=n$ and, for all $u,v\in[0,1]$, $\E\,\gamma(u)=n\,u$ and
$\cov(\gamma(u),\gamma(v))=n(u\wedge v-uv)$. Thus, for any sequence $(a_j)$
satisfying~(\ref{eq:g4-bis}), similarly to the previous
case,~\ref{item:hyp-consistency-ergo} holds and the trawl process $X$ has
auto-covariance function $r$ given by
$$
r(k)=n\,\sum_{j=0}^\infty (\tilde a_j\wedge
\tilde a_{j+k})-n\,\sum_{j=0}^\infty \tilde a_j\tilde a_{j+k}\;,\qquad k\in\N\;,
$$
where, for all $j\in\N$, $\tilde a_j=a_j\1_{\{a_j<1\}}$. Also, for the binomial
seed and $(a_j)$ satisfying~(\ref{eq:g4-bis}),~(\ref{eq:hyp-a1-p}) holds for any integer
$p$, and~\ref{item:hyp-consistency-common} holds with $s_0=(\alpha^*-1)/2$ by 
Theorem~\ref{theo:trawl-gen-ass}~\ref{item:trawl-gen-ass2}.

Having checked that the trawl process satisfies~\ref{item:hyp-consistency-ergo}
and~\ref{item:hyp-consistency-common} for these seeds, we now turn to the form
of its spectral density and show that it is indeed of the
form~(\ref{eq:sp-dens-general}) and can be used with
Lemma~\ref{lem:Kn-infinite-dim} to form a parameter space $K$ that
satisfies~\ref{item:hyp-consistency}.
  
\begin{prop}\label{prop:other-seeds}
  Assume~\ref{item:basic}. Suppose that $\gamma$ is Lévy seed process, a mixed
  Poisson seed process or a binomial seed process. Suppose moreover that
  $\gamma(1)$ has finite positive variance and $(a_j)$
  satisfies~(\ref{eq:g4-bis}) with $\alpha^*\in(1,2)$. Then the trawl process
  defined by~(\ref{g1}) has a spectral density of the
  form~(\ref{eq:sp-dens-general}) with $d^*=1-\alpha^*/2\in(0,1/2)$ and 
  $h^*\in H(\alpha^*-1,C)$ for some $C>0$.
\end{prop}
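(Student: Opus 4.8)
The plan is to reduce the claim to a single asymptotic expansion of the autocovariance $r$ of $X$, and then to read off $h^*$ from its Fourier coefficients. In each of the three cases the covariance formulas recalled above write $r(k)$ as a fixed positive multiple of
\[
\rho_1(k):=\sum_{j\ge0}\bigl(a_j\wedge a_{j+k}\bigr)
\]
plus a (bounded) multiple of $\rho_2(k):=\sum_{j\ge0}a_ja_{j+k}$; for the binomial seed $a_j$ is replaced by $\tilde a_j=a_j\1_{\{a_j<1\}}$, which differs from $a_j$ for only finitely many $j$ and shares its asymptotics. Setting $d^*=1-\alpha^*/2$, it then suffices to prove that for a suitable $c^*>0$,
\[
r(k)=c^*\,r^{(d^*)}(k)+O(k^{-\alpha^*})\qquad(k\to\infty),
\]
where $r^{(d^*)}$ is the ARFIMA covariance (\ref{eq:farim-cov}). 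Indeed, $r$ and $r^{(d^*)}$ are real and even, so defining $h^*$ as the function with Fourier coefficients $c_k(h^*)=(2\pi)^{-1}\bigl(r(k)/c^*-r^{(d^*)}(k)\bigr)$ produces real, even coefficients with $\abs{c_k(h^*)}=O\bigl((1+\abs{k})^{-\alpha^*}\bigr)=O\bigl((1+\abs{k})^{-1-(\alpha^*-1)}\bigr)$, i.e. $h^*\in H(\alpha^*-1,C)$ for $C$ large enough; absolute summability then makes $h^*$ continuous, and by construction $c^*(f_{d^*}+h^*)$ has autocovariance $r$, hence is the spectral density of $X$, of the form (\ref{eq:sp-dens-general}).

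The contribution of $\rho_2$ is negligible: from a uniform bound $a_j\le C'(1+j)^{-\alpha^*}$ and $(1+j+k)^{-\alpha^*}\le(1+k)^{-\alpha^*}$ one gets $\rho_2(k)\le (C')^2(1+k)^{-\alpha^*}\sum_{j\ge0}(1+j)^{-\alpha^*}=O(k^{-\alpha^*})$. The core is therefore the expansion of $\rho_1$. I would use the identity $a_j\wedge a_{j+k}=a_{j+k}-\max(a_{j+k}-a_j,0)$ to split
\[
\rho_1(k)=T_k-E_k,\qquad T_k:=\sum_{m\ge k}a_m,\quad E_k:=\sum_{j\ge0}\max(a_{j+k}-a_j,0)\;.
\]
Inserting $a_m=c\,m^{-\alpha^*}+O(m^{-\alpha^*-1})$ and comparing the tail with an integral yields $T_k=\tfrac{c}{\alpha^*-1}\,k^{1-\alpha^*}+O(k^{-\alpha^*})$, which carries the leading power.

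I expect the main obstacle to be the bound $E_k=O(k^{-\alpha^*})$: one must show that the non-monotonicity of $(a_j)$ permitted by (\ref{eq:g4-bis}) cannot inflate the remainder. The key point is that for $k$ large the ``ascending'' set $\{j:a_{j+k}>a_j\}$ is bounded uniformly in $k$. Writing $a_j=c\,j^{-\alpha^*}(1+\theta_j)$ with $\abs{\theta_j}\le C''/j$ for $j$ large, the inequality $a_{j+k}>a_j$ forces $(1+k/j)^{-\alpha^*}>1-C'''/j$; since $1-(1+x)^{-\alpha^*}\ge(1-2^{-\alpha^*})\min(x,1)$, this gives $\min(k,j)\le C''''$, so for $k\ge k_0$ one necessarily has $j<J_*$ with $J_*$ independent of $k$ (the finitely many small indices where the asymptotics are not yet valid being absorbed into $J_*$). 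On this bounded set each term satisfies $\max(a_{j+k}-a_j,0)\le a_{j+k}\le 2c\,k^{-\alpha^*}$, whence $E_k\le 2cJ_*\,k^{-\alpha^*}$. This gives $\rho_1(k)=\tfrac{c}{\alpha^*-1}k^{1-\alpha^*}+O(k^{-\alpha^*})$, hence $r(k)=\kappa\,k^{1-\alpha^*}+O(k^{-\alpha^*})$ with $\kappa>0$ (equal to $\var(\gamma(1))\,c/(\alpha^*-1)$, $\E\zeta\,c/(\alpha^*-1)$ and $n\,c/(\alpha^*-1)$ in the L\'evy, mixed Poisson and binomial cases). To conclude I would invoke the classical expansion $r^{(d^*)}(k)=B\,k^{1-\alpha^*}(1+O(1/k))$ with $B=\Gamma(\alpha^*-1)/\bigl(\Gamma(\alpha^*/2)\Gamma(1-\alpha^*/2)\bigr)>0$, coming from the Gamma-function formula for the ARFIMA covariance; setting $c^*=\kappa/B>0$ matches the leading $k^{1-\alpha^*}$ terms and leaves the desired $O(k^{-\alpha^*})$ remainder, which finishes the argument.
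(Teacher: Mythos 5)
Your proof is correct and follows essentially the same route as the paper's: the same decomposition of $r$ into the $\sum_j(a_j\wedge a_{j+k})$ term and the $\sum_j a_ja_{j+k}$ term, the same $O(k^{-\alpha^*})$ bound on the product term, the same reduction of the min-term to the tail sum $\sum_{j\ge k}a_j$ (your ``bounded ascending set'' argument plays the role of the paper's Lemma~\ref{lem:sum-wedge-lemma}), and the same comparison with the ARFIMA covariance asymptotics~(\ref{eq:asymp-autocov-arfima}). The only cosmetic difference is that the paper matches $\sum_{j\ge k}a_j$ with $r^{(d^*)}(k)$ via the telescoping sequence $a_k^*=r^{(d^*)}(k)-r^{(d^*)}(k+1)$, whereas you compute the tail sum by integral comparison and match leading constants directly; both yield the same $O(k^{-\alpha^*})$ remainder.
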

\begin{proof}
  See Section~\ref{sec:proof-trawl-processes}.
\end{proof}

\section{Weighted weak dependence indices}
\label{sec:weight-weak-depend}

Here we introduce a somewhat general setting that will be used later to derive
some important properties on the memory of Trawl processes. They can be,
however, of independent interest.

We use the classical weak-dependence concept.
\begin{Def} [\cite{Dedecker2007}]\label{defwd}
 A random process $(X_t)_{t\in \Z}$ is said to be $\theta-$weakly dependent if
\begin{equation}\label{depf}
\left|\cov \left(f(X_{i_1},\ldots,X_{i_u}),g(X_{j_1},\ldots,X_{j_v})\right)\right|\le
\theta_r\, v\;,
\end{equation}
for  $i_1\le\cdots\le  i_u\le j_1-r\le j_1\le\cdots\le j_v$ and functions $f:\R^u\to\R$ with $\|f\|_\infty\le 1$ and $g:\R^v\to\R$ with
$$|g(y_1,\ldots,y_v)-g(x_1,\ldots,x_v)|\leq |y_1-x_1|+\cdots+|y_v-x_v|\;. $$
\end{Def}
\begin{Def}
  A time series $(X_k)$ is said to be a causal Bernoulli shift process (CBS) if
  there exists an iid sequence $(\gamma_j)_{j\in\Z}$ valued in
  $(E,\mathcal{E})$ and a measurable function $\Phi:E^\nset\to\R$ such that,
  for all $k\in\Z$, $X_k=\Phi((\gamma_{k-j})_{j\geq0})$. The $L^q$ coefficients
  $(\pi^{(q)}_r)_{r\geq1}$ of $(X_k)$ are then defined
  by
    \begin{align}
    \label{eq:couplingargument}
    \pi^{(q)}_r&=\left\|\Phi((\gamma_j)_{j\geq0})-\Phi((\gamma_j)_{0\leq
              j<r},(\gamma'_j)_{j\geq r})\right\|_q\;,
    \end{align}
where $(\gamma'_j)_{j\in\Z}$ is an
  independent copy of $(\gamma_j)_{j\in\Z}$.
\end{Def}
Provided that a CBS process is well defined in $\L^2$, it is weakly dependent.
\begin{lem}\label{lemme1_theta-weak}
Let $X$ be an  $\L^{2}$ centered CBS process. Then it is $\pi^{(2)}-$weakly dependent. 
\end{lem}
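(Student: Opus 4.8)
The plan is to construct, for every admissible index configuration, a coupled copy of the ``future'' block that is simultaneously independent of the ``past'' block and identically distributed to the genuine future block; the covariance then reduces to a coupling error that is controlled directly by the coefficients $\pi^{(2)}$.

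Fix $r\ge1$ and indices $i_1\le\cdots\le i_u\le j_1-r\le j_1\le\cdots\le j_v$, together with $f$ satisfying $\|f\|_\infty\le1$ and $g$ as in Definition~\ref{defwd}. Write $F=f(X_{i_1},\dots,X_{i_u})$ and $G=g(X_{j_1},\dots,X_{j_v})$. Let $(\gamma'_\ell)_{\ell\in\Z}$ be an independent copy of $(\gamma_\ell)_{\ell\in\Z}$ and splice $\gamma^*_\ell=\gamma_\ell\,\1_{\{\ell>i_u\}}+\gamma'_\ell\,\1_{\{\ell\le i_u\}}$. Setting $X^*_{j_m}=\Phi((\gamma^*_{j_m-j})_{j\ge0})$ and $G^*=g(X^*_{j_1},\dots,X^*_{j_v})$, I would isolate two structural facts. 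First, since $(\gamma^*_\ell)_\ell$ is itself i.i.d.\ with the same marginal law as $(\gamma_\ell)_\ell$, the block $(X^*_{j_1},\dots,X^*_{j_v})$ has the same distribution as $(X_{j_1},\dots,X_{j_v})$, so $\E[G^*]=\E[G]$. Second, for each $m$ the innovations kept in $X^*_{j_m}$ carry indices $j_m-j>i_u$ while the replaced ones are drawn from $(\gamma'_\ell)$; hence $(X^*_{j_m})_{1\le m\le v}$, and thus $G^*$, is $\sigma((\gamma_\ell)_{\ell>i_u},(\gamma'_\ell)_{\ell\le i_u})$-measurable, which is independent of $F$, the latter being $\sigma((\gamma_\ell)_{\ell\le i_u})$-measurable.

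These two facts together eliminate any spurious constant from the bound: indeed $\E[F]\E[G]=\E[F]\E[G^*]=\E[F\,G^*]$, so
\begin{equation*}
\cov(F,G)=\E[F\,G]-\E[F\,G^*]=\E\big[F\,(G-G^*)\big]\;.
\end{equation*}
Bounding $|F|\le1$, then using the Lipschitz property of $g$ from Definition~\ref{defwd}, followed by $\E|\cdot|\le\|\cdot\|_2$ and stationarity together with the definition~(\ref{eq:couplingargument}), I obtain
\begin{equation*}
|\cov(F,G)|\le\E|G-G^*|\le\sum_{m=1}^v\E|X_{j_m}-X^*_{j_m}|\le\sum_{m=1}^v\|X_{j_m}-X^*_{j_m}\|_2=\sum_{m=1}^v\pi^{(2)}_{j_m-i_u}\;,
\end{equation*}
where the relative cutoff for the $m$-th term is $r_m=j_m-i_u\ge r$.

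The last step, and the only point where I expect genuine care to be needed, is to replace each $\pi^{(2)}_{r_m}$ by $\pi^{(2)}_r$, i.e.\ to use that $r\mapsto\pi^{(2)}_r$ is non-increasing. In $\L^2$ this is elementary via the orthogonal Hoeffding decomposition $X_0=\sum_T\Phi_T$ into mutually orthogonal components depending on disjoint blocks $T$ of innovation lags: replacing the lags $\ge r$ by independent copies yields $\|X_0-X^{(r)}_0\|_2^2=2\sum_{T:\,T\cap\{j\ge r\}\neq\emptyset}\|\Phi_T\|_2^2$, which manifestly decreases as $r$ grows. Granting this monotonicity, each summand is at most $\pi^{(2)}_r$ and the previous display gives $|\cov(F,G)|\le v\,\pi^{(2)}_r$, which is exactly~(\ref{depf}) with $\theta_r=\pi^{(2)}_r$; hence $X$ is $\pi^{(2)}$-weakly dependent. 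In summary, the substance lies entirely in verifying the measurability and equidistribution of the coupling and the monotonicity of the coefficients, the remainder being a one-line Lipschitz estimate.
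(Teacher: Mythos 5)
Your proof is correct, and it follows the same coupling strategy as the paper, but with a different splice point that changes the bookkeeping in two places. The paper replaces, for each future index $j_k$, the innovations at lags $\ge r$ \emph{relative to $j_k$}, so that each coupling error is exactly $\pi^{(2)}_r$ and no monotonicity of the coefficients is needed; the price is that the coupled block $(X'_{j_1},\dots,X'_{j_v})$ is no longer equidistributed with the original one (different coordinates splice at different absolute times), so the paper must center $f$ at $m=\E f(X_{i_1},\dots,X_{i_u})$ to make the cross term vanish by independence alone, and then applies Cauchy--Schwarz with $\|F-m\|_2\le1$. You instead splice at the absolute time $i_u$, which makes the coupled block genuinely equidistributed (so you can work with $\E[F(G-G^*)]$ and $|F|\le1$ directly), but produces coupling errors $\pi^{(2)}_{j_m-i_u}$ with $j_m-i_u\ge r$, so you do need the monotonicity of $r\mapsto\pi^{(2)}_r$. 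You are right that this is the only point requiring genuine care, and your Hoeffding/Efron--Stein argument for it is sound; an equivalent shortcut is the identity $(\pi^{(2)}_r)^2=2\E[X_0^2]-2\E[(\E[X_0\mid\gamma_0,\dots,\gamma_{r-1}])^2]$, whose second term is non-decreasing in $r$ by the tower property. Net effect: both routes yield the same bound $v\,\pi^{(2)}_r$; the paper's choice of coupling is slightly more economical because it sidesteps the monotonicity lemma, while yours has the advantage that the coupled future block is an honest copy of the original, which is the cleaner conceptual picture.
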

\begin{proof}
We write $X_k=X'_k+(X_k-X'_k)$ where $(X'_k)$ is defined  by
$X'_k=\Phi((\gamma_{k-j})_{0\leq j<r},(\gamma'_{k-j})_{j\geq r}))$.  
Observe now that $X'_k$ is independent of $\sigma(\gamma_i,\,i\leq k-r)$,
hence of $\sigma(X_i,\,i\leq k-r)$. On the other hand we have that
$$
\E (X_k-X'_k)^2=\pi^{(2)}_r\;.
$$
Now take $f:\R^u\to\R$ with $\|f\|_\infty\le 1$ and $g:\R^v\to\R$ Lipschitz and
$i_1\le\cdots\le  i_u\le j_1-r\le j_1\le\cdots\le j_v$. Denoting
$$
m=\E\,f(X_{i_1},\ldots,X_{i_u})\;, 
$$
we get that $ \left|\cov
  \left(f(X_{i_1},\ldots,X_{i_u}),g(X_{j_1},\ldots,X_{j_v})\right)\right|$ is
bounded from above by
$$
\E\left[\left|f(X_{i_1},\ldots,X_{i_u})-m\right|\,\left(\sum_{k=1}^v|X_{j_k}-X'_{j_k}|\right)\right]\;.
$$
And we conclude with the Cauchy-Schwartz inequality.
\end{proof}
Using the same proof we can include polynomial terms in the functions $f$
and $g$.
\begin{Def} \label{defwdweighted}
 Let $\past{p},\fut{p}\geq1$. A random process $(X_t)_{t\in \Z}$ is said to be $(\past{p},\fut{p})$-weighted $\theta-$weakly dependent if
\begin{equation}\label{depf-weighted}
\left|\cov \left(f(X_{i_1},\ldots,X_{i_u}),g(X_{j_1},\ldots,X_{j_v})\right)\right|\le
\theta_r\, v \;,
\end{equation}
for all $i_1\le\cdots\le  i_u\le j_1-r\le j_1\le\cdots\le j_v$, all 
 functions $f:\R^u\to\R$ satisfying
$$
\left|f(x_{i_1},\ldots,x_{i_u})\right|\leq \frac1{1+u}\left( 1+\sum_{k=1}^u|x_{i_k}|\right)^{\past{p}}\;,
$$
and  all functions $g:\R^v\to\R$ satisfying
$$
\frac{|g(y_1,\ldots,y_v)-g(x_1,\ldots,x_v)|\;(1+2v)}{\left(1+|y_1|+\cdots+|y_v|+|x_1|+\cdots+|x_v|\right)^{\fut{p}-1}}\leq 
\left(|y_1-x_1|+\cdots+|y_v-x_v|\right)\;.
$$ 
\end{Def}
\begin{Rem}\label{rem:weighted-weak-depend}
  Note that in Definition~\ref{defwdweighted}, the conditions on $f$ and $g$
  are weaker as $\futpast{p}$ increases. Namely, if $1\leq \futpast{p}\leq \futpast{p'}$, then a $(\past{p'},\fut{p'})$-weighted
  $\theta-$weakly dependent random process $(X_t)_{t\in \Z}$ is also
  $(\past{p},\fut{p})$-weighted $\theta-$weakly dependent.
\end{Rem}
Using this new definition, we get the following result.
\begin{lem}\label{lemme1weights}
Let $(X_k)$ be an  $\L^{2p}$ centered CBS process. Then, for any $\futpast{p}\geq1$
such that $\past{p}+\fut{p}=2p$,
it is $(\past{p},\fut{p})$-weighted $\theta$-weakly
  dependent with
$$
\theta_r\leq \left(1\vee\|X_0\|^{2p-1}_{2p} \right)\; \pi^{(2p)}_r\;.
$$
\end{lem}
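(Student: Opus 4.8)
The plan is to mimic the coupling argument used for Lemma~\ref{lemme1_theta-weak}, replacing the Cauchy--Schwarz inequality by a three-factor Hölder inequality tailored to the polynomial growth permitted in Definition~\ref{defwdweighted}. Fix $r\geq1$ and indices $i_1\le\cdots\le i_u\le j_1-r\le j_1\le\cdots\le j_v$, and let $f,g$ satisfy the two weighted conditions. As in the proof of Lemma~\ref{lemme1_theta-weak}, introduce the decoupled future variables $X'_{j_k}=\Phi((\gamma_{j_k-\ell})_{0\le\ell<r},(\gamma'_{j_k-\ell})_{\ell\ge r})$. These depend only on $\gamma$-coordinates of index $\ge j_k-r+1\ge j_1-r+1>i_u$ and on the independent copy $(\gamma')$, so $g(X'_{j_1},\dots,X'_{j_v})$ is independent of $f(X_{i_1},\dots,X_{i_u})$. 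Since $X'_{j_k}$ has the same law as $X_{j_k}$, we moreover have $\E[g(X_j)-g(X'_j)]=0$, whence
\[
\cov\!\bigl(f(X_{i_1},\dots,X_{i_u}),g(X_{j_1},\dots,X_{j_v})\bigr)=\E\!\left[f(X_{i_1},\dots,X_{i_u})\bigl(g(X_{j_1},\dots,X_{j_v})-g(X'_{j_1},\dots,X'_{j_v})\bigr)\right]\;,
\]
so, in contrast with Lemma~\ref{lemme1_theta-weak}, no centering of $f$ is needed here.

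Next I would insert the two defining bounds. Writing $A=1+\sum_{k=1}^u|X_{i_k}|$, $B=1+\sum_{k=1}^v(|X_{j_k}|+|X'_{j_k}|)$ and $D=\sum_{k=1}^v|X_{j_k}-X'_{j_k}|$, the condition on $f$ gives $|f(X_i)|\le A^{\past p}/(1+u)$ and the weighted Lipschitz condition on $g$ gives $|g(X_j)-g(X'_j)|\le B^{\fut p-1}D/(1+2v)$. Hence the covariance is bounded in absolute value by $(1+u)^{-1}(1+2v)^{-1}\,\E[A^{\past p}B^{\fut p-1}D]$. The three factors $A^{\past p}$, $B^{\fut p-1}$, $D$ carry homogeneity degrees $\past p$, $\fut p-1$ and $1$, summing to $\past p+\fut p=2p$, which is exactly the available moment order. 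I would therefore apply Hölder's inequality with the conjugate exponents $2p/\past p$, $2p/(\fut p-1)$ and $2p$, valid since
\[
\frac{\past p}{2p}+\frac{\fut p-1}{2p}+\frac{1}{2p}=\frac{\past p+\fut p-1+1}{2p}=1\;,
\]
which yields $\E[A^{\past p}B^{\fut p-1}D]\le\|A\|_{2p}^{\past p}\,\|B\|_{2p}^{\fut p-1}\,\|D\|_{2p}$.

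It then remains to estimate the three $L^{2p}$ norms. The coupling term is treated exactly as in Lemma~\ref{lemme1_theta-weak}: by stationarity of the Bernoulli shift and the definition~(\ref{eq:couplingargument}), each $\|X_{j_k}-X'_{j_k}\|_{2p}=\pi^{(2p)}_r$, so Minkowski gives $\|D\|_{2p}\le v\,\pi^{(2p)}_r$; this is where the announced factor $v$ is produced. For the two weights, Minkowski together with stationarity ($\|X_{i_k}\|_{2p}=\|X'_{j_k}\|_{2p}=\|X_0\|_{2p}$) gives $\|A\|_{2p}\le 1+u\|X_0\|_{2p}$ and $\|B\|_{2p}\le 1+2v\|X_0\|_{2p}$. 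Setting $N=1\vee\|X_0\|_{2p}$ and using $1+u\|X_0\|_{2p}\le(1+u)N$ and $1+2v\|X_0\|_{2p}\le(1+2v)N$, the normalizing prefactors $(1+u)^{-1}$ and $(1+2v)^{-1}$ are meant to soak up the combinatorial factors and collapse the power of $N$ to $N^{\past p+\fut p-1}=N^{2p-1}$, delivering $\theta_r\le N^{2p-1}\pi^{(2p)}_r=(1\vee\|X_0\|_{2p}^{2p-1})\,\pi^{(2p)}_r$.

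The delicate point, and the step I expect to require the most care, is precisely this last bookkeeping: one must check that the prefactors $1/(1+u)$ and $1/(1+2v)$ built into Definition~\ref{defwdweighted} genuinely neutralize the powers of $(1+u)$ and $(1+2v)$ coming out of Minkowski, so that no residual dependence on the number of arguments survives in $\theta_r$. I would also handle separately the degenerate case $\fut p=1$, in which $B^{\fut p-1}\equiv1$, the middle Hölder factor disappears, and the estimate reduces to a two-factor Hölder with exponents $2p/\past p$ and $2p$; the same prefactor accounting then applies. Once the estimate is established for a single admissible split $(\past p,\fut p)$ with $\past p+\fut p=2p$, the identity of exponents shows it holds for every such pair, completing the argument.
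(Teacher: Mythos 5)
Your overall route is the paper's: the same coupling $X'_{j_k}=\Phi((\gamma_{j_k-\ell})_{0\le\ell<r},(\gamma'_{j_k-\ell})_{\ell\ge r})$ followed by a three-factor H\"older inequality with exponents $2p/\past{p}$, $2p/(\fut{p}-1)$ and $2p$, and your norm estimates for the three factors are exactly those of the paper. However, your very first identity contains a genuine error. You claim that $\E[g(X_{j_1},\dots,X_{j_v})-g(X'_{j_1},\dots,X'_{j_v})]=0$ because each $X'_{j_k}$ has the same law as $X_{j_k}$. Equality of the \emph{marginal} laws is true, but what you need is equality of the \emph{joint} laws of the two vectors, and this fails for $v\ge2$: the coupling replaces $\gamma_m$ by $\gamma'_m$ for $m\le j_k-r$, a cutoff that moves with $k$, so the primed variables no longer share their remote past with one another the way the unprimed ones do. (Take $\Phi(x_0,x_1,\dots)=x_0+x_1$, $j_1=0$, $j_2=1$, $r=1$: then $\cov(X_0,X_1)=\var(\gamma_0)$ while $\cov(X'_0,X'_1)=0$.) Consequently $\cov(f,g)=\E[f\,(g(X_j)-g(X'_j))]$ does not hold in general and your starting inequality is unjustified.

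The repair is the one already used in Lemma~\ref{lemme1_theta-weak} and in the paper's proof of the present lemma: center $f$ rather than $g$. Since $g(X'_{j_1},\dots,X'_{j_v})$ is independent of $f(X_{i_1},\dots,X_{i_u})$ (this part of your argument is correct), writing $m=\E\,f(X_{i_1},\dots,X_{i_u})$ gives $\E[(f-m)\,g(X'_{j_1},\dots,X'_{j_v})]=0$ and hence $\cov(f,g)=\E[(f-m)(g(X_j)-g(X'_j))]$, with no distributional identity needed. The rest of your computation then goes through with $|f|$ replaced by $|f-m|$ in the first H\"older factor, at the cost of at most a factor $2$ from $\|f-m\|_{2p/\past{p}}\le2\|f\|_{2p/\past{p}}$ (a factor the paper elides in this lemma but keeps in Lemma~\ref{lemme1weights-better}). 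Your closing worry about whether the prefactors $1/(1+u)$ and $1/(1+2v)$ really absorb the powers of $(1+u)$ and $(1+2v)$ produced by Minkowski is well placed, but the paper's own proof asserts the same bounds there, so that bookkeeping issue is not specific to your argument; the decoupling step above is the one substantive gap.
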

\begin{proof}
  Let us now prove the bound of the  $p$-weighted $\theta-$weak dependence
  coefficient $\theta^{(p)}_r$.
  We use the same notation as in the proof of Lemma~\ref{lemme1_theta-weak} but this time
  with $f$ and $g$ as in Definition~\ref{defwdweighted}.
  We then obtain that
  $$
  \left|\cov
  \left(f( X_{i_1},\ldots, X_{i_u}),g( X_{j_1},\ldots, X_{j_v})\right)\right|
$$
is bounded from above by
\begin{equation}
  \label{eq:wigth-mixing-bound-before-holder}
\frac1{1+2v}\,\E\left[\left|f( X_{i_1},\ldots, X_{i_u})-m\right|\,\left(\sum_{k=1}^v|X_{j_k}-X'_{j_k}|\right)\,\left(1+\sum_{k=1}^v| X_{j_k}|+|X'_{j_k}|\right)^{\fut{p}-1}\right]\;.
\end{equation}
Using the H{\"o}lder inequality with $\past{p}/(2p)+1/(2p)+(\fut{p}-1)/2p=1$, we obtain
\begin{equation}
  \label{eq:wigth-mixing-bound}
  \left|\cov
  \left(f( X_{i_1},\ldots, X_{i_u}),g( X_{j_1},\ldots, X_{j_v})\right)\right|\leq A\;B\;C\;,
\end{equation}
with
\begin{align*}
  A&=\left\|f( X_{i_1},\ldots, X_{i_u})-m\right\|_{2}
     \leq \left\|f( X_{i_1},\ldots, X_{i_u})\right\|_{2p/\past{p}}\\
  B&=\left\|\sum_{k=1}^v|X_{j_k}-X'_{j_k}|\right\|_{2p}\leq v\,\pi_r^{(2p)}\\
  C&=\frac
     1{1+2v}\,\left\|\left(1+\sum_{k=1}^v\left(| X_{j_k}|+|X'_{j_k}|\right)\right)^{\fut{p}-1}\right\|_{2p/(\fut{p}-1)}
    \;.
\end{align*}
We immediately have that, by the assumption on $f$  that
$$
A\leq\frac 1{1+u}\,\left(1+\sum_{k=1}^u
  \left\| X_{i_k}\right\|_{2p}\right)^{\past{p}}\leq1\vee
  \left\| X_{0}\right\|_{2p}^{\past{p}}\;.
$$
Finally, we note that
$C\leq
\left(1\vee\left\| X_0\right\|_{2p}^{\fut{p}-1}\right)$.
The result follows from~(\ref{eq:wigth-mixing-bound}) and the above bounds of
$A,B$ and $C$.
\end{proof}

We also obtained this lemma with an improved weighted weakly dependent
coefficient by conceding a bit of moment condition.

\begin{lem}\label{lemme1weights-better}
Let $(X_k)$ be an  $\L^{2p}$ centered CBS process. Then, for any $\futpast{p}\geq1$
  \begin{equation}
    \label{eq:cond-p-futpast-p}
  \past{p}+\fut{p}=p+1\;.    
  \end{equation}
it is $(\past{p},\fut{p})$-weighted $\theta$-weakly
  dependent with
$$
\theta_r\leq2\,\left(1\vee\left\| X_0\right\|_{2p}^{p}\right)\,\pi_r^{(2)}\;.
$$
where $C_p$ is a positive constant only depending on $p$ and $S_0(2p)$ is defined in~(\ref{eq:Sr2p}).
\end{lem}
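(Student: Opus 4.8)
The plan is to follow the coupling argument used in the proofs of Lemma~\ref{lemme1_theta-weak} and Lemma~\ref{lemme1weights} verbatim up to the Hölder step, and then to redistribute the Hölder exponents so that the coupling factor is measured in $\L^2$ rather than in $\L^{2p}$. Concretely, I would fix $i_1\le\cdots\le i_u\le j_1-r\le j_1\le\cdots\le j_v$ and $f,g$ as in Definition~\ref{defwdweighted}, introduce the coupled variables $X'_{j_k}=\Phi((\gamma_{j_k-l})_{0\le l<r},(\gamma'_{j_k-l})_{l\ge r})$, and use that $(X'_{j_1},\dots,X'_{j_v})$ is independent of $(X_{i_1},\dots,X_{i_u})$ together with $\E[f(X_{i_1},\dots,X_{i_u})-m]=0$ (with $m=\E f$) to write
\[
\cov(f,g)=\E\left[(f-m)\,\bigl(g(X_{j_1},\dots,X_{j_v})-g(X'_{j_1},\dots,X'_{j_v})\bigr)\right]\;.
\]
Applying the weighted-Lipschitz bound on $g$ then reproduces exactly the bound~(\ref{eq:wigth-mixing-bound-before-holder}), which is the common starting point shared with Lemma~\ref{lemme1weights}.

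The key step is the choice of Hölder exponents. Instead of the triple $(2p/\past{p},2p,2p/(\fut{p}-1))$ used for Lemma~\ref{lemme1weights}, I would split the product $|f-m|\cdot\bigl(\sum_k|X_{j_k}-X'_{j_k}|\bigr)\cdot\bigl(1+\sum_k|X_{j_k}|+|X'_{j_k}|\bigr)^{\fut{p}-1}$ using the exponents $(2p/\past{p},\,2,\,2p/(\fut{p}-1))$. The corresponding reciprocals are $\past{p}/(2p)+1/2+(\fut{p}-1)/(2p)$, and these sum to $1$ precisely when $\past{p}+\fut{p}-1=p$, that is, under the constraint~(\ref{eq:cond-p-futpast-p}). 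This is exactly where the hypothesis $\past{p}+\fut{p}=p+1$ enters, and it is the structural reason the coupling factor can now be taken in $\L^2$, producing $\pi_r^{(2)}$ in place of $\pi_r^{(2p)}$.

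It then remains to bound the three resulting norms. For the coupling factor, Minkowski's inequality and stationarity of the coupling give $\|\sum_{k=1}^v|X_{j_k}-X'_{j_k}|\|_2\le v\,\pi_r^{(2)}$, which furnishes both the factor $v$ required in~(\ref{depf-weighted}) and the announced $\pi_r^{(2)}$. For the factor involving $f$, centering costs a factor $2$ (the exponent $2p/\past{p}\ge2$, since $\past{p}\le p$, does not in general allow the variance reduction available in the $\L^2$ case), and the growth assumption on $f$ together with stationarity gives $\|f-m\|_{2p/\past{p}}\le 2\,(1\vee\|X_0\|_{2p}^{\past{p}})$; similarly the weight factor obeys $\frac1{1+2v}\|(1+\sum_k|X_{j_k}|+|X'_{j_k}|)^{\fut{p}-1}\|_{2p/(\fut{p}-1)}\le 1\vee\|X_0\|_{2p}^{\fut{p}-1}$, exactly as in Lemma~\ref{lemme1weights} (the boundary case $\fut{p}=1$ being trivial, the weight factor then reducing to $1$). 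Multiplying the three bounds and using $\past{p}+(\fut{p}-1)=p$ together with $(1\vee a^{\past{p}})(1\vee a^{\fut{p}-1})=1\vee a^{p}$ for $a=\|X_0\|_{2p}\ge0$ yields $\theta_r\le 2\,(1\vee\|X_0\|_{2p}^{p})\,\pi_r^{(2)}$, as claimed. The only genuinely delicate point is the bookkeeping ensuring the exponents are admissible and recognizing that only $\L^2$ control of the coupling is now needed while $\L^{2p}$ moments of $X_0$ still suffice for the two remaining factors; everything else is a routine repetition of the earlier argument.
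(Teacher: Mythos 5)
Your proposal is correct and follows essentially the same route as the paper's own proof: starting from the coupling bound~(\ref{eq:wigth-mixing-bound-before-holder}) and applying H\"older with the exponents $2p/\past{p}$, $2$, $2p/(\fut{p}-1)$, whose reciprocals sum to one exactly under~(\ref{eq:cond-p-futpast-p}), then bounding the three factors as you do. Your explicit remarks on the boundary case $\fut{p}=1$ and on the identity $(1\vee a^{\past{p}})(1\vee a^{\fut{p}-1})=1\vee a^{p}$ are welcome additions that the paper leaves implicit.
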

\begin{proof}
  We use again the upper bound~(\ref{eq:wigth-mixing-bound-before-holder}) of
  $$
  \left|\cov
  \left(f( X_{i_1},\ldots, X_{i_u}),g( X_{j_1},\ldots, X_{j_v})\right)\right|\;,
$$
but we apply the H{\"o}lder inequality with the weights 
$\past{p}/(2p)+1/2+(\fut{p}-1)/(2p)=1$ (which holds by~(\ref{eq:cond-p-futpast-p})) and obtain that
\begin{equation}
  \label{eq:wigth-mixing-bound-new}
  \left|\cov
  \left(f( X_{i_1},\ldots, X_{i_u}),g( X_{j_1},\ldots, X_{j_v})\right)\right|\leq A'\;B'\;C'\;,
\end{equation}
with
\begin{align*}
  A'&=\left\|f( X_{i_1},\ldots, X_{i_u})-m\right\|_{2p/\past{p}}\leq
      2\ \left\|f( X_{i_1},\ldots, X_{i_u})\right\|_{2p/\past{p}}\\
  B'&=\left\|\sum_{k=1}^v|X_{j_k}-X'_{j_k}|\right\|_{2}\leq
      v\,\pi_{r}^{(2)}\\
  C'&=\frac
      1{1+2v}\,\left\|\left(1+\sum_{k=1}^v\left(| X_{j_k}|+|X'_{j_k}|\right)\right)^{\fut{p}-1}\right\|_{2p/(\fut{p}-1)}
      \;.
\end{align*}
We immediately have that, by the assumption on $f$  that
$$
A'\leq\frac 2{1+u}\,\left(1+\sum_{k=1}^u
  \left\| X_{i_k}\right\|_{2p}\right)^{\past{p}}\leq 2\left(1\vee
  \left\| X_{0}\right\|_{2p}^{\past{p}}\right)\;.
$$
Finally, we note that, similarly,
$C'\leq
\left(1\vee\left\| X_0\right\|_{2p}^{\fut{p}-1}\right)$.
The result follows from~(\ref{eq:wigth-mixing-bound-new}) and the above bounds of
$A',B'$ and $C'$.
\end{proof}

\section{Proofs}\label{sec:proofs}
\subsection{On trawl processes}
\label{sec:proof-trawl-processes}\begin{proof}[Proof of Lemma~\ref{lemme1}]
  We prove the result under~(\ref{eq:hyp-a1}) and~(\ref{eq:hyp-a1-p}). The case
  where~(\ref{eq:hyp-a1-p}) is not assumed corresponds to setting $p=1$ in the
  following. 
  By the Rosenthal Inequality for sums of independent random
  variables, see \cite[Theorem~2.9]{petrov95}, we have, for any
  $1\leq i\leq k$, for some constant $C_p$ only depending on $p$,
  \begin{equation}
    \label{eq:rosenthal-p-trawl}
    \left\|\sum_{j=i}^k\gamma_j(a_j)\right\|_{2p}\leq   C_p\,
\sum_{j=i}^k \left|\E\,
  \gamma(a_j)\right|+
\left(\sum_{j=i}^k\left\|\tilde{\gamma}(a_j)\right\|_{2p}^{2p}\right)^{1/2p}
+\left(\sum_{j=i}^k\v\,\gamma(a_j)\right)^{1/2}
  \end{equation}
The convergence of~(\ref{g1}) in $\L^{2p}$ follows.

It follows that we can write $X_k$ as $X_k=F((\gamma_{k-j})_{j\geq0})$ with $F$
measurable from $\R^{\R}$ to $\R$, with $\R^{\R}$ endowed by the $\sigma$-field
$\mathcal{B}(\R)^{\otimes\R}$ (the smallest one that makes the $\R^\R\to\R$
mapping $x\mapsto x(t)$ measurable for all $t\in\R$). Since
$(\gamma_{j})_{j\in\Z}$ is i.i.d., it is ergodic, and so is $(X_k)_{k\in\Z}$.

All the other assertions of the lemma are obvious.
\end{proof}
Using the same idea and the results of Section~\ref{sec:weight-weak-depend}, we
now prove  Theorem~\ref{theo:trawl-gen-ass}.
\begin{proof}[Proof of Theorem~\ref{theo:trawl-gen-ass}]
  Let us  now prove Theorem~\ref{theo:trawl-gen-ass}. Note
  that
  $$
  r(k)=\sum_{j\geq 0}\cov \Big(\gamma(a_j),\gamma(a_{k+j})\Big)\leq
  \left(\sum_{j\geq 0} \v\ \gamma(a_j)\right)^{1/2}  \left(\sum_{j\geq k} \v\ \gamma(a_{j})\right)^{1/2}\;.
  $$
  Hence~(\ref{eq:cond-weak-dep-consistency-commen-p3}) implies~(\ref{eq:cov-gen-ass-bound}).

  It remains to show~(\ref{eq:pairwise-cov-gen-ass-bound})
  and~(\ref{eq:13wise-cov-gen-ass-bound}). We use that $(\tilde X_k)$ defined
  in~(\ref{eq:centered-trawl}) can be written the causal Bernoulli shift
  process 
  $$
  \tilde
  X_k=\Phi((\gamma_{k-j})_{j\geq0})\quad\text{with}\quad\Phi((\gamma_j)_{j\geq0})=\sum_{j=0}^\infty\tilde{\gamma}_j(a_j)\;. 
  $$
  where $\Phi$ is a measurable mapping on $E^\nset$, with $E=\R^{\R}$ endowed
  with $\mathcal{B}(\R)^{\otimes\R}$.  Then the $L^q$ coefficients defined in~(\ref{eq:couplingargument})
  with  $(\gamma'_j)_{j\geq0}$ denoting an
  independent copy of $(\gamma_j)_{j\geq0}$, satisfy, for all $r\in\nset$,
  and $q\geq2$,
  \begin{align*}
    \pi^{(q)}_r
    &=     \left\|
      \sum_{j=r}^\infty\left(\gamma_j(a_j)-\gamma'_j(a_j)\right)\right\|_q\\
      &\leq 2\,C_qS_r(q)\;,
  \end{align*}
  where the second inequality follows from~(\ref{eq:rosenthal-p-trawl}) by setting
\begin{equation}
  \label{eq:Sr2p}
S_r(q)=
\left(\sum_{j=r}^\infty\left\|\tilde{\gamma}(a_j)\right\|_{q}^{q}\right)^{1/q}
+\left(\sum_{j=r}^\infty\v\,\gamma(a_j)\right)^{1/2}\;.   
\end{equation}
  
  We now separate the two cases. 
  
  In the case where $p=2$ and
  both~(\ref{eq:cond-weak-dep-consistency-commen-p3})
  and~(\ref{eq:cond-weak-dep-consistency-commen-p2add}) hold, we apply
  Lemma~\ref{lemme1weights} with $\past{p}=\fut{p}=2$ and $\past{p}=3$,
  $\fut{p}=1$, successively. This gives~(\ref{eq:pairwise-cov-gen-ass-bound})
  and~(\ref{eq:13wise-cov-gen-ass-bound}), respectively.

  In the case where $p=3$, we only need~(\ref{eq:cond-weak-dep-consistency-commen-p3}) to hold, as we can
  apply Lemma~\ref{lemme1weights-better} with $\past{p}=\fut{p}=2$ and
  $\past{p}=3$, $\fut{p}=1$, successively. 
\end{proof}

The following lemma is useful for proving Proposition~\ref{prop:other-seeds}. 
\begin{lem}\label{lem:sum-wedge-lemma}
  Let $\alpha>1$. Let $(b_k)$ be a non-negative sequence such that
  $b_k=(1+k)^{-\alpha}\,(1+O(k^{-1}))$ as $k\to\infty$. Then we have, as $k\to\infty$,
  $$
  \sum_{j\geq0} (b_j\wedge b_{j+k})= \sum_{j\geq k} b_j +O(k^{-\alpha})\;. 
  $$
\end{lem}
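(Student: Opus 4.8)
The plan is to separate the monotone part of the sum from the defect created by the non-monotonicity of $(b_j)$. If $(b_j)$ were non-increasing we would have $b_j\wedge b_{j+k}=b_{j+k}$, hence exactly $\sum_{j\geq0}(b_j\wedge b_{j+k})=\sum_{j\geq k}b_j$; the announced error $O(k^{-\alpha})$ must therefore absorb the fluctuations permitted by the relative correction $O(k^{-1})$. I start from the identity $b_j\wedge b_{j+k}=b_{j+k}-(b_{j+k}-b_j)_+$ (since $\min(a,b)=b-(b-a)_+$), which yields
$$
\sum_{j\geq0}(b_j\wedge b_{j+k})=\sum_{j\geq k}b_j-\sum_{j\geq0}(b_{j+k}-b_j)_+\;.
$$
It then suffices to show that the defect $D_k:=\sum_{j\geq0}(b_{j+k}-b_j)_+$, which is non-negative, is $O(k^{-\alpha})$.

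Writing $u_j=(1+j)^{-\alpha}$ for the ideal profile, the hypothesis provides an integer $j_1$ and a constant $\kappa>0$ such that, for $j\geq j_1$, $\abs{b_j-u_j}\leq\kappa\,(1+j)^{-\alpha-1}$ and $0\leq b_j\leq 2u_j$. I would fix a constant $M\geq\max(j_1,2\kappa/c_\alpha)$, with $c_\alpha:=1-2^{-\alpha}>0$ to be used below, and split $D_k$ at $j=M$. On the low range $0\leq j\leq M$ I only use $0\leq(b_{j+k}-b_j)_+\leq b_{j+k}\leq 2(1+k)^{-\alpha}$ (valid for $k$ large, since then $j+k\geq k\geq j_1$); as there are finitely many such terms, this part is $O(k^{-\alpha})$.

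The core of the argument is the high range $j>M$, where I expect the positive part to vanish identically once $k$ is large. The error bound gives $b_{j+k}-b_j\leq-(u_j-u_{j+k})+2\kappa\,(1+j)^{-\alpha-1}$, so it is enough to dominate the fluctuation by the genuine gap. The key estimate is
$$
u_j-u_{j+k}=(1+j)^{-\alpha}\left[1-\left(1+\tfrac{k}{1+j}\right)^{-\alpha}\right]\geq c_\alpha\,(1+j)^{-\alpha}\,\min\!\left(\tfrac{k}{1+j},\,1\right)\;,
$$
where the inequality follows from the concavity of $x\mapsto1-(1+x)^{-\alpha}$ (so that this function is $\geq c_\alpha\min(x,1)$). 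Comparing with $2\kappa(1+j)^{-\alpha-1}$ and distinguishing the regimes $1+j\leq k$ (where the bracket is $\geq c_\alpha$) and $1+j>k$ (where it is $\geq c_\alpha k/(1+j)$), the choices $M\geq2\kappa/c_\alpha$ and $k\geq2\kappa/c_\alpha$ force $u_j-u_{j+k}\geq2\kappa(1+j)^{-\alpha-1}$, hence $(b_{j+k}-b_j)_+=0$ for every $j>M$. Summing the two ranges gives $D_k=O(k^{-\alpha})$, which is the claim.

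I expect the main obstacle to be exactly this gap-versus-fluctuation comparison in the high range: one must verify that in neither regime can the correction term overcome the monotone gap once $j$ exceeds the fixed threshold $M$. It is also worth stressing that the low range cannot be treated by the same gap bound, since for $j$ of order one the fluctuation $2\kappa(1+j)^{-\alpha-1}$ is itself of order one; there the cruder bound by $b_{j+k}$ is essential to recover the rate $k^{-\alpha}$.
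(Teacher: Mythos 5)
Your proof is correct and follows essentially the same route as the paper: both arguments reduce the problem to controlling the defect below $\sum_{j\geq k}b_j$, handle the finitely many small-$j$ terms by the crude bound $b_{j+k}=O(k^{-\alpha})$, and for $j$ beyond a fixed threshold show that the monotone gap between $(1+j)^{-\alpha}$ and $(1+j+k)^{-\alpha}$ dominates the $O((1+j)^{-\alpha-1})$ fluctuation allowed by the hypothesis. The only differences are cosmetic: you package the two bounds through the identity $\min(a,b)=b-(b-a)_+$ and use a concavity estimate to kill the positive part entirely in the high range, whereas the paper keeps the upper and lower bounds separate and tolerates a summable defect $C\sum_{j\geq j_0}(j+k+1)^{-\alpha-1}=O(k^{-\alpha})$ there.
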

\begin{proof}
  First observe that, for all $k\in\nset$, 
  \begin{equation}
    \label{eq:ub-sum-wedge-lemma}
      \sum_{j\geq0} (b_j\wedge b_{j+k})\leq \sum_{j\geq0} b_{j+k} \;.
    \end{equation}
    Now, there exists $C>0$ such that for all $j\in\N$,
  \begin{equation}
    \label{eq:ub-lb-interm-sum-wedge-lemma}
         (1+j)^{-\alpha}\,(1-C\,(j+1)^{-1})\leq b_j\leq      (1+j)^{-\alpha}\,(1+C\,(j+1)^{-1})\;.
       \end{equation}
       It
    follows from the first inequality that, for all $j\in\N$ and $k\geq (1+C)/\alpha$, 
    \begin{align*}
      \frac{b_j}{(j+k+1)^{-\alpha}}
                                 &  \geq (1-C\,(j+1)^{-1})\,(1+\alpha^{-1}(1+C)(j+1)^{-1})^{\alpha} \\
      &= 1+(j+1)^{-1}+O(j^{-2})      \quad\text{as $j\to\infty$.}
    \end{align*}
In particular, the latter term is larger than or equal to $1$ for $j$ large
enough and it follows that there exists $j_0$ only depending on $\alpha$ and
$C$ such that, for all $j\geq j_0$ and $k\geq  (1+C)/\alpha$,
$$
b_j\geq(j+k+1)^{-\alpha}\geq b_{j+k}-C\,(j+k+1)^{-1}\;,
$$
where we used the second inequality of~(\ref{eq:ub-lb-interm-sum-wedge-lemma}).
This now implies that, for all  $k\geq
(1+C)/\alpha$, 
$$
\sum_{j\geq0} (b_j\wedge b_{j+k}) \geq \sum_{j\geq j_0} b_{j+k} - C\sum_{j\geq
  j_0} (j+k+1)^{-\alpha-1}=\sum_{j\geq 0} b_{j+k}- \sum_{j=0}^{j_0-1} b_{j+k} - C\sum_{j\geq
  j_0} (j+k+1)^{-\alpha-1}  \;. 
$$
Since $j_0$ is fixed the two last term in the previous display are
$O(k^{-\alpha})$ as $k\to\infty$ and we conclude
from~(\ref{eq:ub-sum-wedge-lemma}). 
\end{proof}

We can now provide the proof of Proposition~\ref{prop:other-seeds}.

\begin{proof}[Proof of Proposition~\ref{prop:other-seeds}]
  From what precedes, we know that under these assumptions, the trawl process
  has an auto-covariance function of the form
  \begin{align}
    \label{eq:decomp-r-general-seed}
  r(k)=A\;S(k)+B\; P(k)\;,\qquad
    k\in\N\;,    \\
    \nonumber
    \text{with}\quad S(k)=\sum_{j=0}^\infty (\tilde a_j\wedge
\tilde a_{j+k})\quad    \text{and}\quad P(k)=\sum_{j=0}^\infty \tilde a_j\tilde a_{j+k}\;.
  \end{align}
  where $A>0$, $B\in\rset$ and $\tilde a_j= a_j\1_{\{a_j<\bar a\}}$ with
  $\bar a$ some positive constant.  We treat the two terms in the right-hand
  side of~(\ref{eq:decomp-r-general-seed}) separately.

\noindent\textbf{Term} $S$: Since $\tilde a_k=a_k$ for $k$ large enough, $(\tilde a_k)$ also satisfies
Condition~(\ref{eq:g4-bis}) and Lemma~\ref{lem:sum-wedge-lemma} gives
that
\begin{equation}
  \label{eq:Sk:first}
S(k)=\sum_{j\geq k} \tilde a_j + O(k^{-\alpha^*})=\sum_{j\geq k} a_j + O(k^{-\alpha^*})\;.
\end{equation}
Recall the definition of $r^{(d)}$
in~(\ref{eq:farim-cov}). Define, for all $k\geq0$,
$$
a_k^*=r^{(d^*)}(k)-r^{(d^*)}(k+1)=r^{(d^*)}(k)\frac{1-2d^*}{k+1-d^*}\;,
$$
where the second equality is derived in \cite[Example~5]{DJLS-spa}. By
\cite[Theorem~72.1]{gir2012} and its proof, we have for any $d\in(-1/2,1/2)$, 
\begin{equation}
  \label{eq:asymp-autocov-arfima}
  r_k^{(d)}=\frac{\Gamma(1-2d)}{\Gamma(1-d)\Gamma(d)}\;k^{-1+2d}\,(1+O(k^{-1}))\;.
\end{equation}
Hence the previous equation and the definition of $d^*$ give that
\begin{equation}
  \label{eq:asymp-autocov-diff-arfima}
a_k^*= \frac{\Gamma(2-2d^*)}{\Gamma(1-d^*)\Gamma(d^*)}\;k^{-\alpha^*}\,(1+O(k^{-1}))\;.
\end{equation}
And Condition~(\ref{eq:g4-bis}) is equivalent to have
$$
a_k=c^*\,\left(a^*_k+O(k^{-\alpha^*-1})\right)\,
$$
with $c^*>0$ only depending on $c$ and $\alpha^*$. Inserting this
in~(\ref{eq:Sk:first}) and using the definition of $a^*$, we obtain
$$
S(k)=c^* \, r^{(d^*)}(k) + O(k^{-\alpha^*})\;. 
$$
This, with the definition~(\ref{eq:farim-cov}) implies
$$
S(k)=\intperarg{\left(c^*\left|1-\rme^{-\rmi\lambda}\right|^{-2d^*}+h^*_S\right)\rme^{\rmi\lambda
    k}}{\lambda}\,,\qquad k\in\N\;,   
$$
where $h^*_S\in H(\alpha^*-1,C_S)$ for some $C_S>0$. 

\noindent\textbf{Term} $P$: It only remains to prove that $P$ defined
in~(\ref{eq:decomp-r-general-seed}) satisfies $P(k)=O(k^{-\alpha^*})$ as
$k\to\infty$ (so that the associated Fourier series 
belongs to  $H(\alpha^*-1,C_P)$ for some $C_P>0$). This follows immediately by
observing that~(\ref{eq:g4-bis}) with $\alpha^*>1$ implies, for some constant
$C>0$ and all $k\in\N$, 
$$
|P(k)|\leq C\,\sum_{j=0}^{\infty}(j+1)^{-\alpha^*}(j+k+1)^{-\alpha^*}\leq C\,\left(\sum_{j=0}^{\infty}(j+1)^{-\alpha^*}\right)\,(k+1)^{-\alpha^*}
\;.
$$
This concludes the proof. 
\end{proof}

\subsection{Convergence of the empirical covariance function}
\label{sec:postponed-proofs}
We start with the proof of Lemma~\ref{lem:cons-emp-cov-centered}.
\begin{proof}[Proof of Lemma~\ref{lem:cons-emp-cov-centered}]
    Let $r$ denote the autocovariance function of $X$. 
  Let $t_1\leq t_2 \leq t_3\leq t_4$ in $\Z$. We use the identities
  \begin{align}
    \nonumber
\cov(X_{t_1}X_{t_2}\ ,\ X_{t_3}X_{t_4})=  \cum{X_{t_1}\ ,\ X_{t_2}\ ,\ X_{t_3}\ ,\ X_{t_4}}&+
                                                                                             r(t_1-t_3)r(t_2-t_4)\\
    \label{eq:identity-cum4covpairwise}
                                                                                           &+r(t_1-t_4)r(t_2-t_3)\;,\\
\cov(X_{t_1}X_{t_2}X_{t_3}\ ,\ X_{t_4})=  \cum{X_{t_1}\ ,\ X_{t_2}\ ,\ X_{t_3}\ ,\ X_{t_4}}&
    \nonumber
       +r(t_4-t_1)r(t_3-t_2)\\
    \nonumber
                                                                                           &+r(t_4-t_2)r(t_3-t_1)\\
    \label{eq:identity-cum4cov13}
    &+r(t_4-t_3)r(t_2-t_1)\;.     
  \end{align}
  This, with the
  bound~(\ref{eq:cov-gen-ass-bound}), allows to go back and forth
  from~(\ref{eq:pairwise-cov-gen-ass-bound})
  or~(\ref{eq:13wise-cov-gen-ass-bound}) to~(\ref{eq:cum-gen-ass-bound}).
\end{proof}
We can now prove Proposition~\ref{prop:cons-emp-cov-centered}.
\begin{proof}[Proof of Proposition~\ref{prop:cons-emp-cov-centered}]
  We have, using again the identity displayed in~(\ref{eq:cum-gen-ass-bound}),
  \begin{align}
    \nonumber
  \abs{\cov(\widetilde r_n(k)\ ,\ \widetilde r_n(\ell))}&\leq \frac1{n^2}\sum_{s=1}^{n-k}\sum_{s'=1}^{n-\ell}
                                                          \abs{\cov(X_{s}X_{s+k}\ ,\ X_{s'}X_{s'+\ell})}\\
        \label{eq:basic-bound-centered-emp-cov-I}
                                                        &\leq\frac1{n^2}\sum_{s=1}^{n-k}\sum_{s'=1}^{n-\ell}
                                                          \abs{\cum{X_{s}\ ,\
                                                          X_{s+k}\ ,\ X_{s'}\
                                                          ,\ X_{s'+\ell}}}\\
        \label{eq:basic-bound-centered-emp-cov-II}
                                                        &+\frac1{n^2}\sum_{s=1}^{n-k}\sum_{s'=1}^{n-\ell}\abs{r(s-s')r(s-s'+k-\ell)}\\
        \label{eq:basic-bound-centered-emp-cov-III}
        &+\frac1{n^2}\sum_{s=1}^{n-k}\sum_{s'=1}^{n-\ell}\abs{r(s-s'+k)r(s-s'-\ell)}
                                                          \;.    
  \end{align}
  Using~(\ref{eq:cov-gen-ass-bound}), we get
  that~(\ref{eq:basic-bound-centered-emp-cov-II})
  and~(\ref{eq:basic-bound-centered-emp-cov-III}) are both less than or equal to
  $$
  C_0^2\frac1{n^2}\sum_{s=1}^{n}\sum_{s'=1}^{n}(1+|s-s'|)^{-s_0}\leq
  C_0^2\frac1{n}\sum_{\tau=-n+1}^{n-1}(1+|\tau|)^{-s_0}\leq C\ n^{-s_0}\;,
  $$
  where $C>0$ only depends on $C_0$ and $s_0$. To
  get~(\ref{eq:empcov-cov-gen-ass}), it thus only remains to show that a
  similar bound holds for the term appearing
  in~(\ref{eq:basic-bound-centered-emp-cov-I}). To this end we use the
  bound~(\ref{eq:cum-gen-ass-bound}) that we have showed to hold
  under~\ref{item:hyp-consistency-common} in
  Lemma~\ref{lem:cons-emp-cov-centered}. More precisely we use the bound on left-hand
  side of the $\wedge$ sign in ~(\ref{eq:cum-gen-ass-bound}) in the first
  following case and  the bound on right-hand
  side of the $\wedge$ sign for all the other cases:
  \begin{enumerate}
  \item For $s'\leq s\leq s+k\leq s'+\ell$,
    $$
    \abs{\cum{X_{s}\ ,\ X_{s+k}\ ,\ X_{s'}\ ,\ X_{s'+\ell}}}\leq C_2\ (1+|s-s'|)^{-s_0}\;.
    $$
  \item For $s\leq s', s+k\leq s'+\ell$,
    $$
    \abs{\cum{X_{s}\ ,\ X_{s+k}\ ,\ X_{s'}\ ,\ X_{s'+\ell}}}\leq C_2\ (1+|s+k-s'|)^{-s_0}\;.
    $$
  \item For $s'\leq s, s'+\ell\leq \leq  s+k$,
    $$
    \abs{\cum{X_{s}\ ,\ X_{s+k}\ ,\ X_{s'}\ ,\ X_{s'+\ell}}}\leq C_2\ (1+|s'+\ell-s|)^{-s_0}\;.
    $$
  \end{enumerate}
(The case $s\leq s'\leq s'+\ell\leq s+k$ can only occur if $s=s'$ and $\ell=k$
since we assumed $0\leq k\leq \ell$, so is included in the first case.)
 Hence we get that the term in~(\ref{eq:basic-bound-centered-emp-cov-I}) is
 bounded from above by
 $$
 \frac{C_2}{n^2}\ \max_{j=0,k,\ell}\ \sum_{s=1}^{n}\sum_{s'=1}^{n}
 (1+|s-s'+j|)^{-s_0}\leq \frac{C_2}{n}\
 \sum_{\tau=-2n}^{2n}(1+|\tau|)^{-s_0}\leq C\ n^{-s_0}\;,
 $$
 where $C$ only depends on $C_2$ and $s_0$. 
\end{proof}
Next, we prove Corollary~\ref{cor:covar-estim}.
\begin{proof}[Proof of Corollary~\ref{cor:covar-estim}]
  Since $\hat r_n(k)$, $I_n$, $r$ and $f$ are invariant by centering, we can assume
  in the following that $X$ is centered without loss of generality. 

  Using Proposition~\ref{prop:cons-emp-cov-centered} and  $\E\, \tilde
  r_n(k)=(1-k/n)\ r(k)$, we have, for all $0\leq k<n$, 
  $$
  \E\,  \left(\tilde r_n(k)-r(k)\right)^2\leq C\,n^{-s_0}+\left(\frac kn \
    r(k)\right)^2
  \leq n^{-s_0}\left(C\,+C_0(1+k)^{2-2s_0}n^{s_0-2}\right) \leq n^{-s_0}\,(C\,+C_0)\;,
    $$
    where the second inequality follows from~(\ref{eq:cov-gen-ass-bound})
    in~\ref{item:hyp-consistency-common}. The same bound gives that, for all $\ell\geq1$,
  \begin{equation}
    \label{eq:var-sum-trivial} 
  \v\ \sum_{j=1}^{\ell}X_j=\sum_{\tau=-\ell+1}^{\ell-1}(\ell-|\tau|)r(\tau)\leq
  \frac{2C_0}{1-s_0} \, (2+\ell)^{2-s_0}\;,    
  \end{equation}
 and  we get, with~(\ref{eq:decomp-emp-cov-remainder}), for all $0\leq k<n$,
  $$
  \E \abs{R^r_n(k)}\leq n^{-2}\left(\v\ \sum_{j=1}^{n}X_j
    +\left(\v\
      \sum_{j=1}^{n}X_j\right)^{1/2}\left(\v\ \sum_{j=k+1}^{n-k}X_j\right)^{1/2}
  \right)\leq  \frac{2C_0}{1-s_0} \, (2+n)^{-s_0}\;.
$$
With~(\ref{eq:decomp-emp-cov}), we conclude
that~(\ref{eq:empcov-cov-noncentered}) holds.
\end{proof}

Let $\tore$ denote $\R/2\pi\Z$. Recall that the spectral measure $\mu$ of  a
weakly stationary process  $X$ is a finite measure on $\tore$ such that the
covariance function of $X$ 
satisfies
$$
r(k)=\int \rme^{\rmi\lambda}\;\mu(\rmd\lambda)\;,\quad k\in\Z\;.
$$
We derive the following useful lemma.
\begin{lem}\label{lem:parseval-like-decomp}
    Let $X$ be weakly stationary process with spectral measure $\mu$ and define
    the periodogram and the empirical covariance by~(\ref{eq:periodo-def})
    and~(\ref{eq:emp-cov-def}). Let $h:\R\to\R$
be a  $(2\pi)$-periodic bounded function. Then,
   for all $0\leq m<n$ and $(c_k)_{-m\leq k\leq
     m}\in\C^{2m+1}$,
   $$
 \abs{\int_{\tore}{I_n\, h}-\int h\;\rmd\mu}\leq\sum_{k=-m}^m\abs{\hat
   r_n(k)-r(k)}\,\abs{c_k}+\sup_{\lambda\in\R}\abs{h(\lambda)-\sum_{k=-m}^mc_k\rme^{\rmi\lambda
   k}}\,\left(\hat r_n(0)+r(0)\right)\;.
$$
\end{lem}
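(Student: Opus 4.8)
The plan is to reduce the whole statement to one exact Parseval-type identity relating the periodogram to the empirical covariances, and then to handle the approximation of $h$ by the trigonometric polynomial $P(\lambda)=\sum_{k=-m}^m c_k\,\rme^{\rmi\lambda k}$ crudely, through its sup norm. The starting point is the elementary expansion of the periodogram. Writing $Y_j=X_j-\bar X_n$ and expanding the modulus in~(\ref{eq:periodo-def}), one collects the double sum according to the lag $j-j'$ and recognizes the empirical covariance~(\ref{eq:emp-cov-def}), which yields $I_n(\lambda)=\frac1{2\pi}\sum_{|k|<n}\hat r_n(k)\,\rme^{-\rmi\lambda k}$ with the convention $\hat r_n(-k)=\hat r_n(k)$. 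Integrating against $\rme^{\rmi\lambda k}$ over $\tore$ and using orthogonality of the exponentials gives the identity $\int_{\tore}I_n\,\rme^{\rmi\lambda k}=\hat r_n(k)$ for every $|k|<n$; the companion identity for the spectral measure, $\int_{\tore}\rme^{\rmi\lambda k}\,\rmd\mu=r(k)$, is simply the definition of $\mu$. The normalization in~(\ref{eq:periodo-def}) and that of $\mu$ are exactly what make these two relations hold with the same $\int_{\tore}$.

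Next I would split $h=P+g$ with $P(\lambda)=\sum_{k=-m}^m c_k\,\rme^{\rmi\lambda k}$ and remainder $g=h-P$, so that $\int_{\tore}I_n h-\int h\,\rmd\mu=\bigl(\int_{\tore}I_nP-\int P\,\rmd\mu\bigr)+\bigl(\int_{\tore}I_ng-\int g\,\rmd\mu\bigr)$. For the polynomial part, linearity combined with the two Fourier identities from the previous step, which apply since $|k|\le m<n$, gives $\int_{\tore}I_nP-\int P\,\rmd\mu=\sum_{k=-m}^m c_k\,(\hat r_n(k)-r(k))$. Its modulus is bounded by $\sum_{k=-m}^m|\hat r_n(k)-r(k)|\,|c_k|$, which is precisely the first term on the right-hand side of the claimed inequality.

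For the remainder part I would use only positivity. Since $I_n\ge0$ and $\mu$ is a nonnegative measure, $\bigl|\int_{\tore}I_n g\bigr|\le \sup_\lambda|g(\lambda)|\int_{\tore}I_n$ and $\bigl|\int g\,\rmd\mu\bigr|\le\sup_\lambda|g(\lambda)|\,\mu(\tore)$; taking $k=0$ in the two Fourier identities gives $\int_{\tore}I_n=\hat r_n(0)$ and $\mu(\tore)=r(0)$, so the remainder contributes at most $\sup_\lambda\bigl|h(\lambda)-\sum_{k=-m}^m c_k\,\rme^{\rmi\lambda k}\bigr|\,(\hat r_n(0)+r(0))$. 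Adding the two bounds through the triangle inequality yields the lemma. There is no genuine difficulty beyond bookkeeping: the essential content is the combination of an exact identity on the trigonometric-polynomial part with a trivial sup-norm control of the remainder, the latter made possible precisely by the nonnegativity of both $I_n$ and $\mu$. The only points demanding care are that the lag restriction $|k|<n$ holds (guaranteed by $m<n$) and that the symmetry conventions $\hat r_n(-k)=\hat r_n(k)$ and $r(-k)=r(k)$ are used consistently so that the negative-index terms in the sum match the stated bound.
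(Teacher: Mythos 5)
Your proof is correct and follows essentially the same route as the paper: split $h$ into the trigonometric polynomial $\sum_{k=-m}^m c_k\rme^{\rmi\lambda k}$ plus a remainder, use the exact identities $\int_{\tore}I_n\,\rme^{\rmi\lambda k}=\hat r_n(k)$ and $\int\rme^{\rmi\lambda k}\,\rmd\mu=r(k)$ for the polynomial part, and bound the remainder in sup norm via the nonnegativity of $I_n$ and $\mu$ together with $\int_{\tore}I_n=\hat r_n(0)$ and $\mu(\tore)=r(0)$.
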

\begin{proof}
Let  $0\leq m<n$ and $(c_k)_{-m\leq k\leq
    m}\in\C^{2m+1}$, and denote $h_m(\lambda)=\sum_{k=-m}^mc_k\rme^{\rmi\lambda
    k}$ and $\epsilon_m=\sup|h-h_m|$. We write
  $$
  \int_{\tore}{I_n\, h}-\int h\;\rmd\mu=
  \int_{\tore}{I_n\, h_m}-\int h_m\;\rmd\mu+
  \int_{\tore}{I_n\, (h-h_m)}-\int (h-h_m)\;\rmd\mu\;.
  $$
  Replacing $h_m$ by its definition, we get
  $$
    \int_{\tore}{I_n\, h_m}-\int h_m\;\rmd\mu=\sum_{k=-m}^mc_k\,(\hat r_n(k)-r(k))\;.
  $$
  Then, by definition of $\epsilon_m$, we have
  $$
\abs{  \int_{\tore}I_n\,(h-h_m)}\leq\epsilon_m\,\int_{\tore}{I_n}=\epsilon_m\,\hat r_n(0)\;,
$$
and, similarly $\abs{  \int{(h-h_m)}}\rmd\mu\leq\epsilon_m\,r(0)$. The
result follows.
\end{proof}
We can no prove Corollary~\ref{cor:spdens-estim}.
\begin{proof}[Proof of  Corollary~\ref{cor:spdens-estim}]
  For $\bandwidth$ small enough, since $J$ is compactly supported, we have, for all
  $\lambda_0\in\R$ and $k\in\Z$,
  $$
  c_k(J_{\bandwidth,\lambda_0})=\frac1{2\pi \bandwidth}\int
  J((\lambda-\lambda_0)/\bandwidth)\,\rme^{\rmi\lambda k}\;\rmd\lambda=\rme^{-\rmi\lambda_0 k}\,J^*(\bandwidth\,k)\;,
  $$
  where $J^*(\xi)=\int J(x)\,\rme^{\rmi x\xi}\,\rmd x$ is the Fourier transform
  of $J$.   Since $J$ is two times continuously differentiable and has compact
  support, we have $\abs{J^*(\xi)}={\cal O}(|\xi|^{-2})$ as
  $|\xi|\to\infty$. Hence $J_{\bandwidth,\lambda_0}$ has absloutely summable Fourier
  coefficients and the following identity holds
  $$
  J_{\bandwidth,\lambda_0}(\lambda)=\sum_{k\in\Z}  c_k(J_{\bandwidth,\lambda_0})\,\rme^{\rmi\lambda k}\;,\quad\lambda\in\R\;.
  $$
  Applying Lemma~\ref{lem:parseval-like-decomp} with $m=n-1$ and $c_k=
  c_k(J_{\bandwidth,\lambda_0})$ we get that
   $$
   \abs{\int_{\tore}{I_n\, J_{\bandwidth,\lambda_0}}-\int J_{\bandwidth,\lambda_0}\;\rmd\mu}\leq\sum_{k=-n}^n\abs{\hat
   r_n(k)-r(k)}\,\abs{J^*(\bandwidth\,k)}+\left(\sum_{|k|>n}\abs{J^*(\bandwidth\,k)}\right)\,\left(\hat r_n(0)+r(0)\right)\;.
$$
Applying~(\ref{eq:empcov-cov-noncentered}) and  $\abs{J^*(\xi)}={\cal O}(|\xi|^{-2})$, we get 
$$
 \abs{\int_{\tore}{I_n\, J_{\bandwidth,\lambda_0}}-\int J_{\bandwidth,\lambda_0}\;\rmd\mu}\leq
 C'\,n^{-s_0/2}\,\left(\sum_{k=-n}^n\abs{J^*(\bandwidth\,k)}\right)+ C_1\,(\bandwidth\,n)^{-1}\,r(0)(1+C')\;,
$$
where $C_1$ only depends on $J$. The result then follows from the fact that
$$
\lim_{n\to\infty,\bandwidth\to0}\frac1\bandwidth\sum_{k=-n}^n\abs{J^*(\bandwidth\,k)}=\int |J^*|
<\infty\;. 
$$
\end{proof}
\subsection{Consistency of parametric Whittle estimation}
\label{sec:cons-param-whittle}

We first introduce some notation valid throughout this section and derive
useful lemmas. For any $d\in\R$ and $\epsilon>0$, we define
\begin{align*}
  \underline{f}_{(d,\epsilon)}(\lambda)=\frac{2^{-2(d+\epsilon)}}{2\pi}\, \left|\sin\frac\lambda2\right|^{-2(d-\epsilon)}\quad\text{and}\quad
  \overline{f}_{(d,\epsilon)}(\lambda)= \frac{2^{-2(d-\epsilon)}}{2\pi}\,\left|\sin\frac\lambda2\right|^{-2(d+\epsilon)}\;,
\end{align*}
so that, for all $d'\in[d-\epsilon,d+\epsilon]$ and $\lambda\in\R$,
\begin{equation}
    \label{eq:functional-bounds}
\underline{f}_{(d,\epsilon)}(\lambda)\leq f_{d'}(\lambda)=
\frac{2^{-2d'}}{2\pi}\,\left|\sin\frac\lambda2\right|^{-2d'}\leq  \overline{f}_{(d,\epsilon)}(\lambda)\;.
\end{equation}
Finally we denote
\begin{equation}
    \label{eq:pos-param-dens}
    a_K:= \inf_{(d,h)\in K}\inf_{\lambda\in\R}\left(f_d(\lambda)+h(\lambda)\right)\;.
  \end{equation}
 We now introduce the useful lemmas.  
\begin{lem}
  \label{lem:lipschitz-contrast}
  Let $a>0$ and $f:\R\to\R$ be $(2\pi)$-periodic. Let $g:\R\to\R_+$ be
  $(2\pi)$-periodic and such that $\intper{g}>0$. Then
  $h\mapsto\ln\intper{\frac{g}{(f+h)\vee a}}$ is $(1/a)$-Lipschitz on
  $\cont$. If moreover $\intper{\ln (f\vee a) }<\infty$, $h\mapsto\intper{\ln
    ((f+h)\vee a) }$  is also $(1/a)$-Lipschitz on
  $\cont$.
\end{lem}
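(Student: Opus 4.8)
The goal is to bound the difference of the two maps evaluated at $h_1$ and $h_2$ in $\cont$ by $(1/a)\|h_1-h_2\|_\infty$.

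For the first map, the plan is to use the elementary inequality that, for a nonnegative measurable function $\phi$ with $\intper{\phi}>0$, the map $u\mapsto\ln\intper{\phi\,u}$ is Lipschitz when $u$ is bounded away from $0$, but here it is cleaner to work directly with the two truncated denominators. First I would set $D_i=(f+h_i)\vee a$ for $i=1,2$, so that $D_i\geq a>0$ pointwise and $|D_1-D_2|\leq|h_1-h_2|\leq\|h_1-h_2\|_\infty$ pointwise (the truncation $x\mapsto x\vee a$ is $1$-Lipschitz). The key step is the pointwise bound
\begin{equation*}
  \abs{\frac{g}{D_1}-\frac{g}{D_2}}=\frac{g\,\abs{D_2-D_1}}{D_1D_2}\leq\frac{g}{D_2}\,\frac{\abs{D_1-D_2}}{D_1}\leq\frac{g}{D_2}\,\frac{\|h_1-h_2\|_\infty}{a}\;,
\end{equation*}
using $D_1\geq a$. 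Integrating gives $\abs{\intper{\frac{g}{D_1}}-\intper{\frac{g}{D_2}}}\leq\frac{\|h_1-h_2\|_\infty}{a}\intper{\frac{g}{D_2}}$. Then I would apply the mean value form of the logarithm: since $\ln$ has derivative $1/t$ and both integrals $\intper{g/D_i}$ are positive (as $g\geq0$, $\intper{g}>0$ and $D_i<\infty$ a.e.), the difference of logarithms is bounded by the difference of arguments divided by their minimum. Dividing the displayed bound by $\intper{g/D_2}$ directly yields $\abs{\ln\intper{\frac{g}{D_1}}-\ln\intper{\frac{g}{D_2}}}\leq\frac{1}{a}\|h_1-h_2\|_\infty$, which is the claimed $(1/a)$-Lipschitz property. (One checks symmetrically that the roles of $D_1,D_2$ can be exchanged, so no asymmetry issue arises.)

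For the second map, under the additional hypothesis $\intper{\ln(f\vee a)}<\infty$, the plan is even simpler: the function $x\mapsto\ln(x\vee a)$ is $(1/a)$-Lipschitz on $\R$ because on $\{x>a\}$ its derivative is $1/x\leq1/a$ and on $\{x\leq a\}$ it is constant. Hence pointwise $\abs{\ln((f+h_1)\vee a)-\ln((f+h_2)\vee a)}\leq\frac{1}{a}\abs{h_1-h_2}\leq\frac{1}{a}\|h_1-h_2\|_\infty$, and integrating over $\intpermu$ (a probability measure) gives the result; the finiteness assumption only guarantees that $\intper{\ln((f+h)\vee a)}$ is well defined and finite for every $h\in\cont$, since $\abs{\ln((f+h)\vee a)-\ln(f\vee a)}\leq\frac1a\|h\|_\infty$ is bounded.

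The only mild obstacle is bookkeeping around the truncation and making sure all integrals are finite and the denominators are genuinely positive; once the two elementary Lipschitz facts (truncation is $1$-Lipschitz, and $\ln$ and $\ln\circ(\cdot\vee a)$ have derivatives bounded by $1/a$ on $[a,\infty)$) are isolated, the argument is routine. I expect no serious difficulty, only care in verifying that $\intper{g/D_2}>0$ so the logarithm step is legitimate, which follows from $g\geq0$, $\intper{g}>0$ and $D_2$ being finite $\intpermu$-a.e.
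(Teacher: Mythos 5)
Your proof is correct and follows essentially the same route as the paper's: bound the difference of the integrals by $\frac1a\sup|h_1-h_2|$ times one of the two integrals (using $|1/x-1/y|=|y-x|/(xy)$ together with the $1$-Lipschitz truncation), then conclude via $|\ln y-\ln x|\leq|y-x|/x$, handling the asymmetry by symmetrizing over the two denominators exactly as the paper does with its ``min of the two right-hand sides''; the second assertion is likewise proved in both cases by the pointwise $(1/a)$-Lipschitz property of $x\mapsto\ln(x\vee a)$.
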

\begin{proof}
We apply successively that, for all $0<x\leq y$, 
  $$
      \left|\frac1x-\frac1y\right|=\frac{|y-x|}{xy}
\quad\text{and}\quad
  \left|\ln y-\ln x\right|\leq\frac{|y-x|}{x}\;.
    $$
    We obtain, for all $h,\tilde h\in\cont$,
  \begin{align*}
\left|\intper{\frac{g}{(f_d+h)\vee a}}-\intper{\frac{g}{(f_d+\tilde h)\vee
      a}}\right|&\leq
                  \intper{\frac{g\,|h-\tilde h|}{((f_d+h)\vee a)\,((f_d+\tilde h)\vee a)}}    \\
    &\leq\frac1a \, \sup|h-\tilde h| \;\intper{\frac{g}{((f_d+h)\vee
      a)}}\\
        &\text{ or }\leq\frac1a \, \sup|h-\tilde h|
          \;\intper{\frac{g}{((f_d+\tilde h)\vee
      a)}}\;,
  \end{align*}
  hence is bounded from above by the min of the two last right-hand sides. 
  Taking the difference of the log's then yields 
  $$
  \left|\ln \intper{\frac{g}{(f_d+h)\vee a}}-\ln \intper{\frac{g}{(f_d+\tilde h)\vee
      a}}\right|\leq \frac1a \, \sup|h-\tilde h|\;. 
$$
Hence we get the first assertion.

Similarly, we get that, for all $h,\tilde h\in\cont$,
  $$
  \left|\ln ((f_d+h)\vee a)-\ln ((f_d+\tilde h)\vee
        a)\right|\leq\frac1a \,  \sup|h-\tilde h|\;. 
      $$
And we get the second assertion.
\end{proof}
\begin{lem}
  \label{lem:some-unif-ounds-for-fd}
  Let $d^*<1/2$, $h^*\in\cont$ and $a>0$. 
  For all $d\in\R$ and $h\in\cont$, we have
  \begin{align*}
\lim_{\epsilon\to0} \ln\intper{\frac{f_{d^*}+h^*}{({f}_{(d,\epsilon)}+h)\vee
      a}} =
    \ln\intper{\frac{f_{d^*}+h^*}{(f_d+h)\vee a}}  \;,  \\
\lim_{\epsilon\to0}     \intper{\ln(({f}_{(d,\epsilon)}+h)\vee
      a)}= \intper{\ln((f_d+h)\vee a)}\;,
  \end{align*}
  where ${f}_{(d,\epsilon)}$ denotes either $\underline{f}_{(d,\epsilon)}$ or $\overline{f}_{(d,\epsilon)}$.
\end{lem}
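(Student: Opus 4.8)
The plan is to pass to the limit $\epsilon\to0$ inside each integral, relying on the pointwise convergence $f_{(d,\epsilon)}(\lambda)\to f_d(\lambda)$ as $\epsilon\to0$, which holds for every $\lambda\not\equiv0\pmod{2\pi}$ (hence $\intpermu$-a.e.) and is immediate from the explicit formulas. A convenient preliminary observation is that both bounding densities factor through $f_d$: writing $\rho(\lambda)=2^{-1}\abs{\sin(\lambda/2)}\in(0,1/2]$, one checks directly that $\underline{f}_{(d,\epsilon)}=f_d\,\rho^{2\epsilon}$ and $\overline{f}_{(d,\epsilon)}=f_d\,\rho^{-2\epsilon}$. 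Since $0<\rho\leq1/2<1$, as $\epsilon\downarrow0$ the factor $\rho^{2\epsilon}$ increases to $1$ and $\rho^{-2\epsilon}$ decreases to $1$, so that $\underline{f}_{(d,\epsilon)}\uparrow f_d$ and $\overline{f}_{(d,\epsilon)}\downarrow f_d$ monotonically. The only delicate point throughout is the behaviour near $\lambda=0$, where $f_d$ may blow up; the strategy is to exhibit in each case an $\epsilon$-independent integrable bound so that dominated or monotone convergence applies.

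\textbf{First limit.} Here the numerator $f_{d^*}+h^*$ does not depend on $\epsilon$ and is $\intpermu$-integrable: $f_{d^*}$ has an $O(\abs\lambda^{-2d^*})$ singularity at the origin, integrable because $d^*<1/2$, while $h^*\in\cont$ is bounded. The denominator satisfies $(f_{(d,\epsilon)}+h)\vee a\geq a>0$, so the integrand is bounded in modulus by the fixed integrable function $\abs{f_{d^*}+h^*}/a$, uniformly in $\epsilon$. Since the integrand converges $\intpermu$-a.e. to $(f_{d^*}+h^*)/((f_d+h)\vee a)$, dominated convergence gives $\intper{\frac{f_{d^*}+h^*}{(f_{(d,\epsilon)}+h)\vee a}}\to\intper{\frac{f_{d^*}+h^*}{(f_d+h)\vee a}}$, and composing with the continuous map $\ln$ (the integrals being positive, as in the situations where the lemma is used) yields the first claim.

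\textbf{Second limit.} Now the integrand depends on $\epsilon$ and is unbounded at the origin, but only logarithmically, which makes integrability easy. By the monotonicity recorded above, $\ln((\underline{f}_{(d,\epsilon)}+h)\vee a)$ increases to $\ln((f_d+h)\vee a)$ as $\epsilon\downarrow0$ while staying above the constant $\ln a$, so monotone convergence applies directly. For the upper family, fix some $\epsilon_0>0$; then for $0<\epsilon\leq\epsilon_0$ we have $\ln a\leq\ln((\overline{f}_{(d,\epsilon)}+h)\vee a)\leq\ln((\overline{f}_{(d,\epsilon_0)}+h)\vee a)$, and the right-hand bound is $\intpermu$-integrable because it has only a logarithmic singularity at $0$ (namely $O(\log\abs\lambda)$); dominated convergence then gives the limit. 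In both cases the limiting integrand $\ln((f_d+h)\vee a)$ is itself integrable for the same reason, so the limits are finite.

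The step requiring the most care is the control near $\lambda=0$: one must \emph{not} attempt to exploit the divergence of the denominator, but instead use the crude lower bound $(f_{(d,\epsilon)}+h)\vee a\geq a$ in the first limit and the fixed dominating function $\ln((\overline{f}_{(d,\epsilon_0)}+h)\vee a)$ in the second. Once these $\epsilon$-uniform integrable bounds are in place, both statements follow from the monotone and dominated convergence theorems together with the continuity of $\ln$.
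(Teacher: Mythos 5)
Your proof is correct and follows essentially the same route as the paper: pointwise convergence of $f_{(d,\epsilon)}$ to $f_d$ off $2\pi\Z$, the domination $\frac{f_{d^*}+h^*}{(f_{(d,\epsilon)}+h)\vee a}\leq\frac1a(f_{d^*}+h^*)$ for the first limit, and the $\epsilon$-uniform integrable bound $\ln a\leq\ln((f_{(d,\epsilon)}+h)\vee a)\leq\ln((\overline{f}_{(d,\epsilon_0)}+h)\vee a)$ for the second, concluded by dominated convergence. The factorization $f_{(d,\epsilon)}=f_d\,\rho^{\pm2\epsilon}$ and the resulting monotone-convergence variant for the lower family are a pleasant extra but not a different argument.
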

\begin{proof}
  We have, for all $\lambda\notin2\pi\Z$, as $\epsilon\to0$,
  ${f}_{(d,\epsilon)}(\lambda)\to f_d(\lambda)$. Moreover, for $\epsilon\in(0,1)$,
  $$
  \frac{f_{d^*}+h^*}{({f}_{(d,\epsilon)}+h)\vee
    a}\leq \frac1a \, (f_{d^*}+h^*)\quad\text{and}\quad
\ln a\leq \ln(({f}_{(d,\epsilon)}+h)\vee a) \leq
\ln((\overline{f}_{(d,1)}+h)\vee a)\;.
$$
  We conclude by dominated
  convergence.  
\end{proof}
\begin{lem}
  \label{lem:some-continuity-using-fd}
  Let $d^*<1/2$, $h^*\in\cont$ and $a>0$. 
  For all $d\in\R$ and $h\in\cont$, we have
  \begin{align*}
\lim_{(d',\tilde h)\to(d,h)} \ln\intper{\frac{f_{d^*}+h^*}{(f_{d'}+\tilde h)\vee
      a}} = 
    \ln\intper{\frac{f_{d^*}+h^*}{(f_d+h)\vee a}}  \;,  \\
\lim_{(d',\tilde h)\to(d,h)}     \intper{\ln((f_{d'}+\tilde h)\vee
      a)}= \intper{\ln((f_d+h)\vee a)}\;.
  \end{align*}
\end{lem}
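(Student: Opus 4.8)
The plan is to treat both limits in the same way, separating the dependence on $\tilde h$ from the dependence on $d'$ by a triangle inequality, and then invoking the two preceding lemmas, which are tailored exactly for this. Since $\R\times\cont$ (with the sup norm on $\cont$) is a metric space, it suffices to bound $|\Psi(d',\tilde h)-\Psi(d,h)|$ for $(d',\tilde h)$ close to $(d,h)$, where $\Psi$ denotes either of the two functionals. Writing $g=f_{d^*}+h^*$, which is non-negative with $\intper{g}>0$ (as implicitly used in Lemma~\ref{lem:some-unif-ounds-for-fd}) so that all logarithms are well defined, I would split
$$|\Psi(d',\tilde h)-\Psi(d,h)|\leq |\Psi(d',\tilde h)-\Psi(d',h)|+|\Psi(d',h)-\Psi(d,h)|\;,$$
the first term controlling the move $\tilde h\to h$ at fixed $d'$, the second the move $d'\to d$ at fixed $h$.

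For the first term I would invoke Lemma~\ref{lem:lipschitz-contrast} with $f=f_{d'}$ and $g$ as above: it gives that $h\mapsto\ln\intper{\frac{g}{(f_{d'}+h)\vee a}}$ and $h\mapsto\intper{\ln((f_{d'}+h)\vee a)}$ are both $(1/a)$-Lipschitz, so the first term is at most $\frac1a\sup|\tilde h-h|$. The crucial point is that this Lipschitz constant does not depend on $d'$, which is what legitimises the two-step decomposition. For the second functional, applying the lemma requires $\intper{\ln(f_{d'}\vee a)}<\infty$; this holds because $f_{d'}\vee a\geq a$ and $\ln f_{d'}$ has only a logarithmic singularity at the origin (indeed $\ln f_{d'}(\lambda)=O(\log|\lambda|)$ near $0$), which is integrable.

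For the second term I would use the monotone sandwich provided by~(\ref{eq:functional-bounds}): for $|d'-d|\leq\epsilon$ we have $\underline{f}_{(d,\epsilon)}\leq f_{d'}\leq\overline{f}_{(d,\epsilon)}$ pointwise. Since $g\geq0$, the map $f\mapsto\ln\intper{\frac{g}{(f+h)\vee a}}$ is non-increasing in $f$, while $f\mapsto\intper{\ln((f+h)\vee a)}$ is non-decreasing; in either case $\Psi(d',h)$ is trapped between its values at $\underline{f}_{(d,\epsilon)}$ and $\overline{f}_{(d,\epsilon)}$. By Lemma~\ref{lem:some-unif-ounds-for-fd}, both of these bounds converge to $\Psi(d,h)$ as $\epsilon\to0$. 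Hence, given $\eta>0$, choosing $\epsilon$ small enough makes both bounds within $\eta$ of $\Psi(d,h)$, so that $|\Psi(d',h)-\Psi(d,h)|\leq\eta$ whenever $|d'-d|\leq\epsilon$. Combining with the first term, for $\sup|\tilde h-h|<a\eta$ and $|d'-d|\leq\epsilon$ we get $|\Psi(d',\tilde h)-\Psi(d,h)|<2\eta$, which establishes the claimed limits.

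The main obstacle is largely bookkeeping: one must track the two monotonicity directions correctly (non-increasing for the first functional, non-decreasing for the second) and ensure the outer logarithm stays continuous, which rests on $g\geq0$ and $\intper{g}>0$ keeping the inner integral positive. The one genuinely delicate point worth stating explicitly is the uniformity in $d'$ of the Lipschitz constant, which is what allows the clean separation of variables; without it one would instead have to control both variables simultaneously, for instance by a direct dominated-convergence argument along sequences $(d'_n,\tilde h_n)\to(d,h)$ using the integrable dominating function $\frac1a(f_{d^*}+\sup|h^*|)$ — an equally valid but less modular alternative that sidesteps the two preceding lemmas.
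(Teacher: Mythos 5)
Your proposal is correct and follows essentially the same route as the paper's proof: a Lipschitz bound in $h$ uniform in $d'$ via Lemma~\ref{lem:lipschitz-contrast}, followed by the sandwich from~(\ref{eq:functional-bounds}) combined with Lemma~\ref{lem:some-unif-ounds-for-fd}. You merely make explicit the monotonicity directions and the $\epsilon$--$\eta$ bookkeeping that the paper leaves implicit.
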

\begin{proof}
  By Lemma~\ref{lem:lipschitz-contrast}, we have for all $d'\in\R$ and
  $h,\tilde h\in\cont$, 
  $$
\left|  \ln\intper{\frac{f_{d^*}+h^*}{(f_{d'}+\tilde h)\vee
      a}} - \ln\intper{\frac{f_{d^*}+h^*}{(f_{d'}+h)\vee
      a}}\right| \leq\frac1a\,\sup|h-\tilde h|\;.
  $$
  Then using~(\ref{eq:functional-bounds}) and
  Lemma~\ref{lem:some-unif-ounds-for-fd}, we get the first assertion.
  The second assertion is proved similarly. 
\end{proof}
\begin{lem}\label{lem:pos-param-dens}
  Let $K$ be a compact subset of $\R\times\cont$ such that, for all
  $(d,h)\in K$, $f_d+h>0$ on $\R$. Suppose moreover that
    \begin{equation}
    \label{eq:strange-condition}
\left(  \{0\}\times\cont\right) \cap
  \closure_K(\R_-^*\times\cont)=\emptyset\;,    
\end{equation}
where for any $A\subset\rset\times\cont$, $\closure_K(A)$ denotes the closure of $A\cap K$ in $K$. 
  Then we have $\inf_{(d,h)\in K}\inf(f_d+h)>0$. 
\end{lem}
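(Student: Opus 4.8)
My plan is to argue by contradiction, exploiting the compactness of $K$. Writing $a_K$ for the quantity in~(\ref{eq:pos-param-dens}), note that positivity of each $f_d+h$ already gives $a_K\geq0$, so it suffices to rule out $a_K=0$. Assume then $a_K=0$. I would pick $(d_n,h_n)\in K$ with $\inf_\lambda(f_{d_n}+h_n)\to0$ together with near-minimizers $\lambda_n\in[-\pi,\pi]$ so that $f_{d_n}(\lambda_n)+h_n(\lambda_n)\to0$. Using compactness of $K$ in $\R\times\cont$ I extract a subsequence along which $d_n\to d$ and $h_n\to h$ uniformly, with $(d,h)\in K$, and then a further subsequence along which $\lambda_n\to\lambda_\infty\in[-\pi,\pi]$. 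The aim is to contradict $f_d+h>0$ in each case, keeping in mind from~(\ref{eq:functional-bounds}) that $f_d(\lambda)=\frac{2^{-2d}}{2\pi}\,\abs{\sin\frac\lambda2}^{-2d}$, whose behavior near $\lambda=0$ changes qualitatively with the sign of $d$.

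The routine cases are the following. If $\lambda_\infty\neq0$, then $(d',\lambda)\mapsto f_{d'}(\lambda)$ is jointly continuous near $(d,\lambda_\infty)$, so with $h_n\to h$ uniformly one gets $f_{d_n}(\lambda_n)+h_n(\lambda_n)\to f_d(\lambda_\infty)+h(\lambda_\infty)=0$, contradicting $f_d+h>0$. If $\lambda_\infty=0$ and $d>0$, then since $d_n\to d>0$ and $\abs{\sin(\lambda_n/2)}\to0$ one has $f_{d_n}(\lambda_n)\to+\infty$ while $h_n(\lambda_n)$ stays bounded, again a contradiction. If $\lambda_\infty=0$ and $d<0$, then $\abs{\sin(\lambda_n/2)}^{-2d_n}\to0$ forces $f_{d_n}(\lambda_n)\to0$, while $h_n(\lambda_n)\to h(0)$, so the sum tends to $h(0)=f_d(0)+h(0)>0$, a contradiction.

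The delicate case, which I expect to be the \emph{main obstacle}, is $\lambda_\infty=0$ with $d=0$: here $f_{d'}$ is discontinuous in $d'$ at the origin, and everything hinges on the sign of $d_n$ along the subsequence. I would split accordingly, using that at least one of the two sign patterns occurs infinitely often. If $d_n\geq0$ for infinitely many $n$, extract so that $d_n\geq0$ throughout; then $\abs{\sin(\lambda_n/2)}^{-2d_n}\geq1$ yields $f_{d_n}(\lambda_n)\geq 2^{-2d_n}/(2\pi)\to 1/(2\pi)$, and passing to the limit gives $0\geq\frac1{2\pi}+h(0)=f_0(0)+h(0)$, contradicting $f_0+h>0$ at the origin. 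If instead $d_n<0$ for infinitely many $n$, extract so that $d_n<0$ throughout; then $(d_n,h_n)\in K\cap(\R_-^*\times\cont)$ converges in $K$ to $(0,h)$, whence $(0,h)\in\closure_K(\R_-^*\times\cont)$, while also $(0,h)\in\{0\}\times\cont$, contradicting condition~(\ref{eq:strange-condition}). This is exactly the role of that hypothesis: it forbids approaching a point of $K$ with $d=0$ from the region $d<0$, which is the only configuration the continuity and blow-up arguments cannot handle. Having excluded every case, we conclude $a_K>0$.
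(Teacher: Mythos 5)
Your proposal is correct and follows essentially the same route as the paper: a minimizing (here, contradiction-seeking) sequence, compactness of $K\times[-\pi,\pi]$ to extract limits $(d,h,\lambda_\infty)$, and the same four-way case split on $\lambda_\infty$ and the sign of $d$, with condition~(\ref{eq:strange-condition}) invoked exactly as in the paper to force $d_n\geq0$ eventually in the case $\lambda_\infty=d=0$. The only cosmetic differences are your contradiction framing and your direct blow-up argument for the case $\lambda_\infty=0$, $d>0$, where the paper instead uses a quantitative lower bound involving $C_K=\sup_{(d,h)\in K}\sup|h|$.
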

\begin{proof}
  Let
  $$
  C_K=\sup_{(d,h)\in K,\lambda\in\R}|h(\lambda)|\;,
  $$
  which is finite since $K$ is compact and $(h,\lambda)\mapsto h(\lambda)$ continuous.
  
  We will need the following remark.   Let $\epsilon\in(0,\pi)$. We have, for all
  $d>0$ and  $\lambda\in[-\epsilon,\epsilon]$, 
  $$
  f_d(\lambda)=\frac{2^{-2d}}{2\pi}\,\left|\sin(\lambda/2)\right|^{-2d}\geq\frac{\epsilon^{-2d}}{2\pi}\;.
  $$
  Then we get, if $\epsilon^{-2d}/(2\pi)\geq 2C_K+1$, which is equivalent to
  $d\geq\ln(2\pi(2C_K+1))/(-2\ln\epsilon)$, for all $(d,h)\in K$ and
  $\lambda\in[-\epsilon,\epsilon]$,
  \begin{equation}
    \label{eq:bound-CK-small-lamb}
    f_d(\lambda)+h(\lambda)\geq \epsilon^{-2d}/(2\pi)-C_K\geq C_K+1\;.
  \end{equation}
  Let $(d_n,h_n,\lambda_n)$ be a sequence valued in $K\times[-\pi,\pi]$ such
  that
  $$
  a_K=\lim_{n\to\infty}\left(f_{d_n}(\lambda_n)+h_n(\lambda_n)\right)\;,
  $$
  where $a_K$ is defined by~(\ref{eq:pos-param-dens}).  By compactness, there
  is an increasing sequence of integers $(q_n)$ and
  $(d,h,\lambda)\in K\times[-\pi,\pi]$ such that
  $(d_{q_n},h_{q_n},\lambda_{q_n})$ converges to $(d,h,\lambda)$. We now
  separate four cases, which cover all possible cases: 1) $\lambda\neq0$, 2)
  $d<0$, 3) $\lambda=0$ and $d>0$ 4) $\lambda=d=0$.
  
  \noindent\textbf{Case 1)} Suppose that $\lambda\neq0$. Since the mapping
  $(d',\tilde h,\lambda')\mapsto f_{d'}(\lambda')+\tilde h(\lambda')$ is continuous on
  $\R\times\cont\times(\R\setminus(2\pi\Z))$, we get that
  $a_K=f_d(\lambda)+h(\lambda)>0$.

  \noindent\textbf{Case 2)} Suppose that $d<0$. This case is similar to Case
  1): it is sufficient to show that
  $(d',\tilde h,\lambda')\mapsto f_{d'}(\lambda')+\tilde h(\lambda')$ is
  continuous on $(-\infty,0)\times\cont\times\R$, which follows from the
  continuity of $(x,u)\mapsto u^x$ on $(0,\infty)\times\R_+$, which is easy to
  establish.

  \noindent\textbf{Case 3)} Suppose that $\lambda=0$ and $d>0$. Then there
  exists an arbitrarily small $\epsilon\in(0,\pi)$ such that $d\geq
  \ln(2\pi(2C_K+1))/(-4\ln\epsilon)$, and
  ~(\ref{eq:bound-CK-small-lamb}) implies, for $n$ large enough,
  $f_{d_{q_n}}(\lambda_{q_n})+h_{q_n}(\lambda_{q_n}) \geq C_K+1$ hence $a_K>0$. 
  
  \noindent\textbf{Case 4)} Suppose that $\lambda=d=0$. Thanks to Condition~(\ref{eq:strange-condition}), $d_{q_n}$ must be non-negative for $n$ large enough,
  in which case we have
  $$
  f_{d_{q_n}}(\lambda_{q_n})=\frac{2^{-2d_{q_n}}}
  {2\pi}\,\left|\sin(\lambda_{q_n}/2)\right|^{-2d_{q_n}}\geq\frac{2^{-2d_{q_n}}}
  {2\pi}\;,
  $$
  which tends to $1/(2\pi)$ as $n\to\infty$, while $h_{q_n}(\lambda_{q_n})$ tends to
  $h(0)$. Hence
  $$
  a_K= \lim_{n\to\infty}f_{d_{q_n}}(\lambda_{q_n})+h_{q_n}(\lambda_{q_n})\geq
  1/(2\pi)+h(0)=f_d(\lambda)+h(\lambda)\;,
  $$
  since $d=\lambda=0$. Again, we get $a_K>0$. 
\end{proof}
\begin{Rem}
  Condition~(\ref{eq:strange-condition}) means that any parameter $(0,h)$ in
  $K$ is isolated from parameters $(d,\tilde h)$ with $d<0$. This assumption
  cannot be avoided in Lemma~\ref{lem:pos-param-dens}.(As a counterexample,
  take $K=\{(d,-d)~:~d\in[-1/2,0]\}$ where here $-d$ is seen as the function in
  $\cont$ that is constant equal to $-d$). It is of course trivially satisfied
  if $K\subset\R_+\times\cont$.
\end{Rem}
We can now proceed with the proofs of Lemma~\ref{lem:Kn-infinite-dim} and
Theorem~\ref{theo:cons}.
\begin{proof}[Proof of Lemma~\ref{lem:Kn-infinite-dim}]
  We first recall why $H(s,C)$ is a compact subset of $\cont$. 
  For all $u,v\in\R$, we have
  $$
  |\rme^{\rmi k u}-\rme^{\rmi k v}|=\left|\int_0^{|u-v|}\rmi k\,\rme^{\rmi k
      x}\;\rmd x\right|\leq k\,|u-v|\;.
  $$
  We get that, for all $h\in H(s,C)$ and $u,v\in\R$,
  $$
  \left|h(u)-h(v)\right|\leq C\,\left(|u-v|\sum_{|k|\leq
      |u-v|}(1+|k|)^{-1-s}\,k+2\sum_{|k|> |u-v|}(1+|k|)^{-1-s}\right)={\cal O}(|u-v|^{s\wedge1})\;,
  $$
  where the ${\cal O}$ does not depend on $h$.  By the Arzelà–Ascoli theorem,
  we get that $H(s,C)$ is a compact subset of $\cont$. It follows that $A$, as
  a closed subset of $[0,1/2]\times\cont$ is also compact. Thus by
  Lemma~\ref{lem:pos-param-dens}, there exists $a_K>0$ such that
  $f_d+h\geq a_K$ for all $(d,h)\in A$. Since $\sup(|p_m(h)-h|)$ tends to 0
  uniformly in $h\in H(s,C)$ as $m\to\infty$, we get that there exists a
  positive integer $m_0$ such that $f_d+p_m[h]\geq a_K/2>0$ on $\rset$ for all
  $(d,h)\in A$ and $m\geq m_0$. Let $K_n$ and $K$ be defined as in the lemma
  for some diverging sequence $(m_n)$ of integers larger than or equal to
  $m_0$. It is straightforward to show that $K$ is compact (because for any
  increasing or constant sequence $(\alpha_k)_{k\in\nset}$ of integers and any
  sequence $((d_k,h_k))_{k\in\nset}$ valued and converging in $A$, we have that
  $(d_k,p_{m_{\alpha_k}}[h_k])$ converges in $K$).
  Assumption~\ref{item:hyp-consistency} easily follows by setting
  $h^*_n=p_{m_n}[h^*]$.
\end{proof}
\begin{proof}[Proof of Theorem~\ref{theo:cons}]
Define
$$
\Lambda^*:= \ln c^*+\intper{\ln(f_{d^*}+h^*)} \;.
$$
By~\ref{item:hyp-consistency}, we have $(d^*,h^*_n)\in K_n$. Thus
Equation~(\ref{eq:def-param-edstim}) implies $(\hat d_n,\hat h_n)\to(d^*,h^*)$
in $[0,1/2]\times\cont$ a.s. provided that
\begin{equation}
  \label{eq:proof-cons-theta-star}
\limsup_{n\to\infty}\Lambda_n(d^*,h^*_n)\leq \Lambda^*\quad\text{a.s.}  
\end{equation}
and that, for any $\epsilon_0>0$,
we have
\begin{equation}
  \label{eq:proof-cons-away-theta-star}
\liminf_{n\to\infty}\inf_{(d,h)\in K_0}\Lambda_n(d,h)> \Lambda^*\quad\text{a.s.}  \;,
\end{equation}
where $K_0$ is defined by
$$
K_0=\left\{(d,h)\in K~:~|d-d^*|+\sup|h-h^*|\geq\epsilon_0\right\}\;,
$$
We start with the proof of~(\ref{eq:proof-cons-theta-star}).
By definition of $a_K$ in~(\ref{eq:pos-param-dens}), we have, for
all $(d,h)\in K$,
\begin{align}\label{eq:Lambdastar-aK}
&  \Lambda^*:= \ln c^*+\intper{\ln((f_{d^*}+h^*)\vee a_K)} \;,\\
\label{eq:LambdaNstar-aK}&\Lambda_n(d,h)=  \ln\intper{\frac{I_n}
  {(f_d+h)\vee a_K}}\;+\intper{\ln \left((f_d+h)\vee a_K\right)}
\; .
\end{align}
And by Lemma~\ref{lem:pos-param-dens}, $a_K>0$. Note that
$\intper{I_n}=\hat r_n(0)/(2\pi)>0$ for $n$ large enough, a.s. Applying
Lemma~\ref{lem:lipschitz-contrast} with $a=a_K$ and since $h^*_n$ converges to
$h^*$ uniformly by~\ref{item:hyp-consistency}, we get
$$
\lim_{n\to\infty}\left|\Lambda_n(d^*,h^*_n)-\Lambda_n(d^*,h^*)\right|=0\quad\text{a.s.}
$$
Since $1/((f_{d^*}+h^*)\vee a_K)$ is continuous and $X$ is ergodic with
spectral density $f$ given by~(\ref{eq:sp-dens-general}), we have
$$
\lim_{n\to\infty}\intper{\frac{I_n}
  {(f_{d^*}+h^*)\vee a_K}}=\intper{\frac{c^*(f_{d^*}+h^*)}
  {(f_{d^*}+h^*)\vee a_K}}\quad\text{a.s.}
$$
(see e.g. \cite[Theorem~8.2.1]{gir2012}). By definition of $a_K$, this limit is
$c^*$ and, with the three previous displayed equation, we get~(\ref{eq:proof-cons-theta-star}).

We conclude with the proof of~(\ref{eq:proof-cons-away-theta-star}), given some
$\epsilon_0>0$. Equations~(\ref{eq:LambdaNstar-aK})
and~(\ref{eq:functional-bounds}) and Lemma~\ref{lem:lipschitz-contrast} with
$a=a_K$ yield,
for all $(d,h)$ and $(d',\tilde h)$ in $K$ such that $|d-d'|\leq\epsilon$ and
$\sup|h-\tilde h|\leq\epsilon$,
\begin{equation*}
  \Lambda_n(d',\tilde h)\geq
 \ln\intper{\frac{I_n}
  {(\overline{f}_{(d,\epsilon)}+h)\vee a_K}}\;+\intper{\ln \left((\underline{f}_{(d,\epsilon)}+h)\vee a_K\right)}-\frac{2\epsilon}{a_K}
\; .
\end{equation*}
Since $1/((\overline{f}_{(d,\epsilon)}+h)\vee a_K)$ is continuous and $X$ is ergodic, we have
$$
\lim_{n\to\infty}\intper{\frac{I_n}
  {(\overline{f}_{(d,\epsilon)}+h)\vee a_K}}=\intper{\frac{c^*(f_{d^*}+h^*)}
  {(\overline{f}_{(d,\epsilon)}+h)\vee a_K}}\quad\text{a.s.}
$$
The last two displays give that, for all  $(d,h)\in K$ and $\epsilon>0$,
\begin{multline}
  \label{eq:LB-LambdaN}
\liminf_{n\to\infty}\inf\left\{\Lambda_n(d',\tilde h)~:~(d',\tilde h)\in K,\,|d'-d|\leq\epsilon,\,\sup|h-\tilde h|\leq\epsilon\right\}\\\geq
 \ln\intper{\frac{c^*(f_{d^*}+h^*)}
  {(\overline{f}_{(d,\epsilon)}+h)\vee a_K}}\;+\intper{\ln \left((\underline{f}_{(d,\epsilon)}+h)\vee a_K\right)}-\frac{2\epsilon}{a_K}\quad\text{a.s.}
\end{multline}
For all $(d,h)\in K_0$, since $f_d+h$ and $f_{d^*}+h^*$ do not coincide almost
everywhere, the Jensen inequality and the definition of $\Lambda^*$ give that
$$
\ln\intper{\frac{c^*(f_{d^*}+h^*)}{(f_d+h)\vee a_K}}+\intper{\ln((f_d+h)\vee a_K)}>\Lambda^*\;.
$$
Since $K_0$ is compact, by Lemma~\ref{lem:some-continuity-using-fd}, we can find $\eta>0$ such that
$$
\inf_{(d,h)\in K_0}\ln\intper{\frac{c^*(f_{d^*}+h^*)}{(f_d+h)\vee
    a_K}}+\intper{\ln((f_d+h)\vee a_K)}\geq\Lambda^*+4\eta\;.
$$
Applying Lemma~\ref{lem:some-unif-ounds-for-fd} with $a=a_K$, we get that, for
all $(d,h)\in K_0$, there exists $\epsilon>0$ such that
\begin{align*}
\ln\intper{\frac{c^*(f_{d^*}+h^*)}{(\overline{f}_{(d,\epsilon)}+h)\vee
    a_K}}+\intper{\ln((\underline{f}_{(d,\epsilon)}+h)\vee
  a_K)}
  &\geq
    \ln\intper{\frac{c^*(f_{d^*}+h^*)}{(f_d+h)\vee a_K}}\\&+\intper{\ln((f_d+h)\vee a_K)}-2\eta\\
   & \geq\Lambda^*+2\eta\;.  
\end{align*}
Since $K_0$ is compact, we can thus cover $K_0$ with a finite collection
$(B_i)_{i=1,\dots,N}$, for which, for any $i=1,\dots,N$, there exists
$\epsilon_i\in(0,\eta\,a_K/2)$ and $(d_i,h_i)\in
K_0$ such that
$$
\ln\intper{\frac{c^*(f_{d^*}+h^*)}{(\overline{f}_{(d_i,\epsilon_i)}+h_i)\vee
    a_K}}+\intper{\ln((\underline{f}_{(d_i,\epsilon_i)}+h_i)\vee
  a_K)}\geq\Lambda^*+2\eta\;,
$$
and  $(d,h)\in B_i$ implies $|d-d_i|\leq\epsilon_i$ and
$\sup|h-h_i|\leq\epsilon_i$. Let $i=1,\dots,N$. Applying~(\ref{eq:LB-LambdaN})
with $d=d_i$, $h=h_i$ and $\epsilon=\epsilon_i$, we
get that
$$
\liminf_{n\to\infty}\inf_{(d',\tilde h)\in B_i}\Lambda_n(d',\tilde h)\geq \Lambda^*+\eta\quad\text{a.s.}
$$
Since 
$(B_i)_{i=1,\dots,N}$ covers  $K_0$, we
obtain~(\ref{eq:proof-cons-away-theta-star}).

We thus have proved that $(\hat d_n,\hat h_n)\to(d^*,h^*)$
in $[0,1/2]\times\cont$ a.s. and it only remains to show that $\hat c_n\to c^*$
a.s., where, by~(\ref{eq:c-parm-estim}) and the definition of $a_K$,
$$
\hat c_{n}=\intper{\frac{I_n} {(f_{\hat d_{n}}+\hat h_{n})\vee a_K}}\;.
$$
Let $\epsilon>0$ and suppose that $|\hat d_n-d^*|\leq\epsilon$ and $\sup|\hat
h_n-h^*|\leq\epsilon$, which happens for $n$ large enough, a.s. 
By (\ref{eq:functional-bounds}) and Lemma~\ref{lem:lipschitz-contrast}
successively, we get that 
$$
\ln \intper{\frac{I_n} {(\overline{f}_{(d^*,\epsilon)}+h^*)\vee a_K}}-\frac\epsilon{a_K}\leq \ln \hat c_{n}
\leq \ln \intper{\frac{I_n} {(\underline{f}_{(d^*,\epsilon)}+h^*)\vee a_K}}+\frac\epsilon{a_K}
$$
Letting $n$ tend to $\infty$ and then $\epsilon$ to zero (using Lemma~\ref{lem:some-unif-ounds-for-fd}), we get that
$$
\lim_{n\to\infty}\ln \hat c_n=\ln\intper{\frac{c^*(f_{d^*}+h^*)}{(f_{d^*}+h^*)\vee a_K}}\quad\text{a.s.}
$$
By definition of $a_K$, the latter integral is $c^*$ and the proof is concluded.
\end{proof}

\subsection{The arfima case: Proof of Corollary~\ref{cor:arfima}}
\label{sec:arfima-case:-proof}
  Denote, for any $d\in[0,1/2)$ and $\vartheta=(\phi,\theta)\in\Theta_{p,q}$,
  $$
  f_{d,\vartheta,\sigma^2}(\lambda)=\sigma^2\,f_{d}(\lambda)\,\left|\frac{\Theta(\rme^{-\rmi k\lambda})}{\Phi(\rme^{-\rmi k\lambda})}\right|^2\;.  
  $$
  From the discussion preceding Corollary~\ref{cor:arfima} and leading
  to~(\ref{eq:hdtheta-arfima}), we write
  $$
  f_{d,\vartheta,\sigma^2}=\sigma^2\frac{\left|\Theta(1)\right|^2}{\left|\Phi(1)\right|^2}\left(f_d+f_d\mathrm{R}(\vartheta)\right)\;,
  $$
  with $\mathrm{R}(\vartheta)$ defined for all $\vartheta\in\Theta_{p,q}$ as the
  $\rset\to\rset$ $(2\pi)$-periodic function
  $$
  [\mathrm{R}(\vartheta)](\lambda)=\left|\frac{\Theta(\rme^{-\rmi\lambda})\Phi(1)}{\Phi(\rme^{-\rmi\lambda})\Theta(1)}\right|^2-1\;.
  $$
Define the mapping
  \begin{align*}
    \Psi:[0,1/2)\times\Theta_{p,q}&\to[0,1/2)\times\cont\\
    (d,\vartheta)&\mapsto\left(d,f_d\mathrm{R}(\vartheta)\right)    
  \end{align*}
  and denote by $K=\Psi(\tilde K)$ its range over $\tilde K$.
  The following facts are established at the end of this proof section.
  \begin{enumerate}[label=(\roman*),series=fact-proof-corollary-arfima]
  \item\label{item:fact1}  We have $\widetilde{\Lambda}_n=\Lambda_n\circ\Psi$ over $[0,1/2)\times\Theta_{p,q}$.
  \item\label{item:fact2} We have, for all $d\in[0,1/2)$,
    $\vartheta=(\phi,\theta)\in\Theta_{p,q}$ and $\sigma^2>0$,
    $$
    \exp\circ\widetilde\Lambda_n(d,\vartheta) =
    \left|\frac{\Phi(1)}{\Theta(1)}\right|^2\intper{\frac{I_n}{f_d+f_d\mathrm{R}(\vartheta)}} \;.
    $$
  \item\label{item:bicontinuity-R} The mapping $\mathrm{R}$ is continuous and
    one-to-one on
    $\Theta_{p,q}$. We denote by
    $\mathrm{R}^{-1}$ its inverse, which is continuous on  $\mathrm{R}(K_0)$
    for all compact subset $K_0\subset\Theta_{p,q}$.
  \end{enumerate} Then the condition~(\ref{eq:def-param-edstim-arfima})
  defining $(\hat d_n,\hat\vartheta_n)$ is equivalent to
  have~(\ref{eq:def-param-edstim}) with $K_n=K$ and
  $\hat h_n=f_{\hat d_n}\mathrm{R}(\hat \vartheta_n)$ (and the same $\hat d_n$). To
  apply Theorem~\ref{theo:cons} on this sequence $(\hat d_n,\hat h_n)$, we need
  to check that~\ref{item:hyp-consistency} holds with $K_n=K$ and
  $h^*_n=h^*=f_{d^*}R_{\vartheta^*}$ (as in~(\ref{eq:hdtheta-arfima})) for all
  $n$. In this case, only the compactness of $K$ is non-trivial and since
  $\tilde K$ is compact, this compactness follows from the following assertion:
  \begin{enumerate}[label=(\alph*)]
  \item\label{item:remain-to-be-proved} The mapping $\Psi$ is continuous. 
  \end{enumerate}
  Assuming this fact proven, we can apply Theorem~\ref{theo:cons} and get that
  $(\hat d_n,\hat h_n)$ converges a.s. to $(d^*,h^*)=\Psi(d^*,\vartheta^*)$
  (that is, $h^*$ as in~(\ref{eq:hdtheta-arfima})).  Also by
  Fact~\ref{item:fact2} above,
  with~(\ref{eq:c-parm-estim}),~(\ref{eq:csigma2-arfima})
  and~(\ref{eq:sigma-parm-estim-arfima}), if we can apply
  Theorem~\ref{theo:cons}, then we also get that $\hat\sigma^2_n$ is a
  consistent estimator of $\sigma_*^2$.  Finally, it only remains to explain
  how to get that $\hat\vartheta_n$ converges to $\vartheta^*$ a.s. This follows
  from the assertion that $(\hat d_n,\hat h_n)=\Psi(\hat d_n,\hat\vartheta_n)$
  converges a.s. to $(d^*,h^*)=\Psi(d^*,\vartheta^*)$, provided that $\Psi$ can
  be continuously inversed on $K$. To summarize, to conclude the proof, we only need
  to prove the following assertion.
  \begin{enumerate}[label=(\alph*),resume]
  \item\label{item:remain-to-be-proved-bis} The mapping $\Psi$ is bijective and
    bi-continuous from $\tilde K$ to $K$ (its range). 
  \end{enumerate}
  Define the mapping
    \begin{align*}
    \mathrm{A}:(-1/2,1/2)\times\mathcal{A}&\to\cont\\
    (d,h)&\mapsto f_dh\;,    
  \end{align*}
  where, for any $C>0$,
  $$
  \mathcal{A}(C)=\left\{h\in\cont~:~\sup_{t\in\rset^*}|h(t)/t|\leq
    C\right\}\text{ and } \mathcal{A}=\left(\bigcup_{C>0}\mathcal{A}(C)\right) \;.
  $$
  Note that we have, for all $d\in[0,1/2)$, $h\in\mathcal{A}$ and
  $\vartheta\in\Theta_{p,q}$,
  \begin{align*}
    \Psi(d,\vartheta)&=\left(d,\mathrm{A}(d,\mathrm{R}(\vartheta))\right)\;,\\
    \Psi\left(d,\mathrm{R}^{-1}\left(\mathrm{A}(-d,h)\right)\right))&=(d,h)\;.
  \end{align*}
Hence Assertion~\ref{item:remain-to-be-proved-bis} follows from
Assertion~\ref{item:bicontinuity-R} among with the
following facts.
\begin{enumerate}[resume*=fact-proof-corollary-arfima]
\item\label{item:remain-to-be-proved-easy1} For all compact subset $K_0\subset\Theta_{p,q}$ there exists $C>0$ such
  that the range $\mathrm{R}(K_0)\subset\mathcal{A}(C)$. 
\item\label{item:remain-to-be-proved-easy2} For any $C>0$, $\mathrm{A}$ is continuous on
  $(-1/2,1/2)\times\mathcal{A}(C)$. 
\end{enumerate}
\noindent\textbf{Proof of Assertion~\ref{item:fact1}:} This follows directly
from the definitions of $\Lambda_n$ and $\tilde\Lambda_n$
in~(\ref{eq:whittle-contrast}) and~(\ref{eq:whittle-contrast-arfima}) and the
well known fact that, for all $d\in(-1/2,1/2)$ and $\vartheta\in\Theta_{p,q}$,
$$
\intper{\ln(f_d+f_d\mathrm{R}(\vartheta))}=\ln\left|\frac{\Phi(1)}{\Theta(1)}\right|^2\;.
$$

\noindent\textbf{Proof of Assertion~\ref{item:fact2}:} This is simple algebra
using the above definitions. 

\noindent\textbf{Proof of Assertions~\ref{item:bicontinuity-R} and~\ref{item:remain-to-be-proved-easy1}:} Using
standard properties of canonical ARMA processes, we have, for all
$\vartheta=(\phi,\theta)\in\Theta_{p,q}$,
  $$
  [\mathrm{R}(\vartheta)](\lambda)=\left|\frac{\Phi(1)}{\Theta(1)}\right|^2\;\sum_{k\geq1}\alpha_k(\vartheta)\,\left(\cos(k\lambda)-1\right)\;,
  $$
where for any $k\geq1$, the mapping $\vartheta\mapsto\alpha_k(\vartheta)$ is
  polynomial and for any compact subset $K_0\subset\Theta_{p,q}$, there exists
  $C_0>0$ and $\rho_0\in(0,1)$ such that, for all $k\geq1$,
  $$
\left|  \alpha_k(\vartheta)\right|\leq C_0\,\rho^k\;.
$$
Assertion~\ref{item:remain-to-be-proved-easy1} easily follows as well as the
continuity of $\mathrm{R}$ over $\Theta_{p,q}$. Also since
$\mathrm{R}(\vartheta)+1$ is the spectral density of the ARMA($p,q$) process
with ARMA polynomials $\Phi$ and $\Theta$, it is obvious that $\mathrm{R}$ is
one-to-one on $\Theta_{p,q}$. Let $K_0$ be a compact subset of
$\Theta_{p,q}$. Then for all $h\in\mathrm{R}(K_0)$, using standard arguments,
we can express the reciprocal $\mathrm{R}^{-1}(h)$ by
 $$
 \mathrm{R}^{-1}(h)=\arg\min_{\vartheta\in K_0}\mathcal{K}(h;\vartheta)\;,
 $$
 where, for all $\vartheta=(\phi,\theta)\in\Theta_{p,q}$ and $\tilde h\in\cont$,
 $$
 \mathcal{K}(\tilde h;\vartheta)=\intperarg
 {(\tilde h(\lambda)+1)\;\left|\frac{\Phi(\rme^{-\rmi\lambda})}{\Theta(\rme^{-\rmi\lambda})}\right|^2}{\lambda} \;.
$$
Since $(\tilde h,\vartheta)\mapsto\mathcal{K}(\tilde h;\vartheta)$ is
continuous on $\cont\times\Theta_{p,q}$, we get that $\mathrm{R}^{-1}$ is
obviously continuous on $R^{-1}(K_0)$ and Assertion~\ref{item:bicontinuity-R}
is proved.

\noindent\textbf{Proof of Assertion~\ref{item:remain-to-be-proved-easy2}:}
Let $d\in(-1/2,1/2)$ and $h\in\mathcal{A}(C)$ for some positive constant
$C$. Let $\epsilon>0$ such that $[d-\epsilon,d+\epsilon]\subset(-1/2,1/2)$.
Let
$\eta\in(0,\pi/2)$. Then, for all $d'\in[d-\epsilon,d+\epsilon]$ and
$\tilde h\in\mathcal{A}(C)$, we have, using~(\ref{eq:functional-bounds}),
$$
\sup_{|\lambda|\leq\eta}\left|f_d(\lambda)\,h(\lambda)-f_{d'}(\lambda)\,\tilde h(\lambda)\right|\leq 2\,C\,\sup_{|\lambda|\leq\eta}\left( \overline{f}_{(d,\epsilon)}(\lambda)\,|\lambda|\right)\;,
$$
which tends to 0 as $\eta\to0$. On the other hand, we clearly have that
$$
\lim_{(d',\tilde h)\to(d,h)}\sup_{\eta\leq|\lambda|\leq\pi}\left|f_d(\lambda)\,h(\lambda)-f_{d'}(\lambda)\,\tilde h(\lambda)\right|=0\;.
$$
The last two displays yield~\ref{item:remain-to-be-proved-easy2}, which
concludes the proof.

\section{Numerical experiments}
\label{sec:numer-exper}

\subsection{Simulated trawl processes}

We take a sample of exponents $\alpha^*\in(1,2): \alpha^*\in\{1.1,1.3,1.5,1.7,1.9\}$, and for each of them, we
generate two trawl processes obtained from two different sequences $(a_k)$ and two different seed processes: 
\begin{enumerate}
\item Poisson seed:  $\gamma(t)$ is a homogeneous Poisson counting process with
  unit intensity and power sequence~:  $a_k=10.\,(k+1)^{-\alpha^*}$.
\item Binomial seed with $n=10$ (see Section~\ref{sec:other-seeds}) and power sequence~:  $a_k=(k+1)^{-\alpha^*}$.
\end{enumerate}

We showed in Section~\ref{sec:other-seeds} that the
spectral densities of the two resulting trawl processes  are of the
form~(\ref{eq:sp-dens-general}) with $(d^*,h^*)\in[0,1/2]\times H(\alpha^*-1,C)$, for the Poisson (thus Lévy) seed and the  Binomial  seed. 

\subsection{Estimation of the trawl exponent}
To test our new estimator, we will compare it with local Whittle estimator for long range dependent sequences (\cite{Rob}). First, let us recall the definition of the local Whittle estimator.

Here the Hurst exponent is $H=(3-\alpha)/2$  and  the spectral density writes 
$$
f(\lambda)=\frac{c_0}{\Gamma(\alpha)\cos\Big(\frac{\pi (3-\alpha)}2\Big)}\lambda^{\alpha-2}(1+o(\lambda)), 
$$
Let $\lambda_j=2j\pi/n$ denote the canonical  frequencies  for $1\le j\le n/2$,
where $n$ is the sample size. The local
Whittle contrast is defined for a given bandwidth parameter $m\le n/2$ by
\begin{eqnarray*}\label{Rfunction}
R(\alpha)= \ln  \widehat G(\alpha)+\frac{\alpha-2}m \sum_{j=1}^m\ln\lambda_j, \  \widehat G(\alpha)=\frac1m \sum_{j=1}^m\lambda_j^{\alpha-2}
I_n(\lambda_j)\;,
\end{eqnarray*}
where $I_n$ is the usual periodogram, see~(\ref{eq:periodo-def}). Then the
local Whittle estimator $\hat\alpha_{\text{\tiny LW}}$ is computed through numerical minimization of
$R(\alpha)$ over $\alpha\in[1,2]$. In the non-linear case, such as trawl
processes with Poisson or binomial seed, the use of such an estimator is
theoretically justified in\cite{DGH} under the assumption
$\lim_{n\to\infty}\left(\frac mn+\frac1m\right)=0$.

The parametric Whittle estimator that we use is based on the
parameterization~(\ref{eq:sp-dens-general}). Thus we set
$\hat\alpha_{\text{\tiny PW}}=2(1-\hat d_n)$ with $\hat d_n$ the estimator
obtained through numerical minimization of $\Lambda_n(d,h)$ defined
by~(\ref{eq:whittle-contrast}) over $d\in[0,1/2]$ and $h\in\mathcal{P}_N$ (see
Section~\ref{sec:param-whittle-estim}), for a given $N$.

In our setting, both the local Whittle estimator $\hat\alpha_{\text{\tiny LW}}$
and the parametric Whittle estimator $\hat\alpha_{\text{\tiny PW}}$ rely on
tuning parameters, respectively denoted by $m$ and $N$.  Observe that $N$ and
$m$ have very different interpretations. As the bandwidth parameter $m$
increases, a larger range of frequencies is used in the estimation, thus
reducing the variance, and the estimator relies on the approximation
$f(\lambda)\approx c\lambda^{\alpha-2}$ also over a larger range of
frequencies, thus worsening the bias. In contrast, as $N$ increases, we expect
the variance to increase, since the number of parameters to estimate for $h$ is
larger, and the bias to decrease, since the approximation of $h$ by a
trigonometric polynomial is more accurate.

\subsection{Results}
We show here the comparison of the two estimators. We have to guess the
hyperparameter of the two estimators: The ``m'' for the local Whittle and the
number ``$N$'' of Fej{\'e}r kernels for the parametric estimator. We give our
results in function of the choice of these hyperparameters. For each
experiment, we write in bold the choice of hyperparameters minimizing the sum
of the square of the bias and the variance (the mean square error). In all
cases, but especially for the Binomial seed, we can see in the following tables
that our estimator outperforms the local Whittle estimator. A right choice for
the number of kernels seems to be around between $3$ and $5$, even if, best
results may be obtained for higher number of kernels, but this may be due to
local minima reached by numerical optimization.

\begin{table}[!h]\caption{Estimation results for a local Whittle estimator, when $\alpha\in \{1.1, 1.3, 1.5, 1.7, 1.9\}$,\\ $5000$ observations, $m\in \{20,50,100,200\}$, $100$ replications.} \vspace{-0.5cm} \footnotesize{ \begin{center} \begin{tabular}{|c|c|c|c|c|c||c|c|c|c|}\hline\hline $\alpha$   & Statistic  & \multicolumn{ 4}{|c||}{Poisson seed} & \multicolumn{ 4}{|c|}{Binomial seed}  \\  \hline           &  m  & 20 & 50 & 100 & 200 & 20 & 50 & 100 & 200 \\ \hline\hline
1.1            & bias($\widehat{\alpha}_{\text{\tiny LW}})$ & 0.0679  &  0.0179   &  -0.0193  & {\bf -0.0563} &  0.0441  & -0.0332 & {\bf -0.0776} & -0.0953  \\                & sd($\widehat{\alpha}_{\text{\tiny LW}})$ &  0.1973 &  0.1296   &  0.0854  &  {\bf 0.0513}  &   0.1943  & 0.1008 & {\bf 0.0467} & 0.0174 \\
\hline 1.3            & bias($\widehat{\alpha}_{\text{\tiny LW}})$ & 0.0305 & -0.0072 & {\bf -0.0495} & -0.0941 & -0.0887 & {\bf -0.1294} & -0.1795 & -0.2352 \\                  & sd($\widehat{\alpha}_{\text{\tiny LW}})$ &  0.2697  & 0.168 & {\bf 0.1078} & 0.0765  & 0.2383 & {\bf 0.144} & 0.0969 & 0.0635 \\
\hline 1.5            & bias($\widehat{\alpha}_{\text{\tiny LW}})$ & -0.0374 & -0.0726 & {\bf -0.1053} & -0.1402 & -0.0513 & {\bf -0.1595} & -0.224 & -0.2932 \\              & sd($\widehat{\alpha}_{\text{\tiny LW}})$ &  0.2939  & 0.1837 & {\bf 0.111} & 0.075  & 0.2861  & {\bf 0.166} & 0.1051 & 0.0872 \\
\hline 1.7            & bias($\widehat{\alpha}_{\text{\tiny LW}})$ & -0.1025 & -0.1447 & {\bf -0.17} & -0.2074 & -0.1118 & {\bf -0.1998} & -0.2529 & -0.336 \\              & sd($\widehat{\alpha}_{\text{\tiny LW}})$ &  0.2594   & 0.1656 & {\bf 0.1125} & 0.0822  & 0.2509  & {\bf 0.1841} & 0.1159 & 0.0786 \\
\hline 1.9           & bias($\widehat{\alpha}_{\text{\tiny LW}})$ &  -0.1658  & -0.1954 & {\bf -0.2148} & -0.2644 & -0.1955 & {\bf -0.2588} & -0.3238 & -0.4069\\               & sd($\widehat{\alpha}_{\text{\tiny LW}})$ &   0.2238   & 0.1465  & {\bf 0.1125}  & 0.0804  & 0.2614  & {\bf 0.1802} & 0.1265 & 0.0859 \\
\hline \hline \end{tabular}\end{center}}\end{table}

\begin{table}[!h]\caption{Estimation results for the parametric Whittle estimator and Poisson seed, when\\ $\alpha\in \{1.1, 1.3, 1.5, 1.7, 1.9\}$, $5000$ observations, $N\in \{2,3,4,5,6,7,8,9\}$, $100$ replications.} \vspace{-0.5cm} \footnotesize{ \begin{center} \begin{tabular}{|c|c|c|c|c|c|c|c|c|c|}\hline\hline $\alpha$   & Statistic  & \multicolumn{ 8}{|c|}{Poisson seed}  \\  \hline           &  $N$  & 2 & 3 & 4 & 5 & 6 & 7 & 8 & 9 \\ \hline\hline
1.1            & bias($\widehat{\alpha}_{\text{\tiny PW}})$ & {\bf -0.0037}  &  0.0524   &  0.1075  & 0.157 &  0.206  & 0.2595 & 0.2993 & 0.2927  \\                & sd($\widehat{\alpha}_{\text{\tiny PW}})$ &  {\bf 0.058} &  0.0754   &  0.0925  &  0.1114  &   0.1309  & 0.1376 & 0.1585 & 0.1552 \\
\hline 1.3            & bias($\widehat{\alpha}_{\text{\tiny PW}})$ & {\bf -0.0072} & 0.0803 & 0.1483 & 0.2282 & 0.3085 & 0.3639 & 0.362 & 0.3233 \\                  & sd($\widehat{\alpha}_{\text{\tiny PW}})$ &  {\bf 0.0694}  & 0.0922 & 0.1117 & 0.1361  & 0.1569 & 0.1574 & 0.1308 & 0.1288 \\
\hline 1.5            & bias($\widehat{\alpha}_{\text{\tiny PW}})$ & {\bf 0.0159} & 0.1045 & 0.1974 & 0.2766 & 0.3353 & 0.356 & 0.2743 & 0.1949 \\              & sd($\widehat{\alpha}_{\text{\tiny PW}})$ &  {\bf 0.0817}  & 0.1162 & 0.1381 & 0.1439  & 0.1312  & 0.1073 & 0.0918 & 0.0788 \\
\hline 1.7            & bias($\widehat{\alpha}_{\text{\tiny PW}})$ & {\bf 0.0199} & 0.1249 & 0.1959 & 0.2305 & 0.2544 & 0.2383 & 0.125 & 0.0781 \\              & sd($\widehat{\alpha}_{\text{\tiny PW}})$ &  {\bf 0.102}   & 0.1144 & 0.0933 & 0.0774  & 0.0558  & 0.0441 & 0.0788 & 0.0836 \\
\hline 1.9           & bias($\widehat{\alpha}_{\text{\tiny PW}})$ &  0.0199  & 0.0653 & 0.0796 & 0.0841 & 0.0814 & {\bf 0.0469} & -0.0201 & -0.0696\\             & sd($\widehat{\alpha}_{\text{\tiny PW}})$ &   0.0696   & 0.0434  & 0.0216  & 0.0158  & 0.0151  & {\bf 0.0288} & 0.0578 & 0.0602 \\
\hline \hline \end{tabular}\end{center}}\end{table}

\begin{table}[!h]\caption{Estimation results for the parametric Whittle estimator and Binomial seed, when\\ $\alpha\in \{1.1, 1.3, 1.5, 1.7, 1.9\}$, $5000$ observations, $N\in \{2,3,4,5,6,7,8,9\}$, $100$ replications.} \vspace{-0.5cm} \footnotesize{ \begin{center} \begin{tabular}{|c|c|c|c|c|c|c|c|c|c|}\hline\hline $\alpha$   & Statistic  & \multicolumn{ 8}{|c|}{Binomial seed}  \\  \hline           &  $N$  & 2 & 3 & 4 & 5 & 6 & 7 & 8 & 9 \\ \hline\hline
1.1            & bias($\widehat{\alpha}_{\text{\tiny PW}})$ & -0.0892  &  -0.0762   &  {\bf -0.0458}  & -0.0048 &  0.0443  & 0.0829 & 0.1193 & 0.1115  \\                & sd($\widehat{\alpha}_{\text{\tiny PW}})$ &  0.0046 &  0.0316   &  {\bf 0.0602}  &  0.0804  &   0.1084  & 0.1232 & 0.141 & 0.1269 \\
\hline 1.3            & bias($\widehat{\alpha}_{\text{\tiny PW}})$ & -0.2235 & -0.1334 & -0.0651 & {\bf -0.0025} & 0.0579 & 0.1127 & 0.1251 & 0.0826 \\                  & sd($\widehat{\alpha}_{\text{\tiny PW}})$ &  0.0517  & 0.0817 & 0.1032 & {\bf 0.1213}  & 0.1365 & 0.1498 & 0.1499 & 0.1322 \\
\hline 1.5            & bias($\widehat{\alpha}_{\text{\tiny PW}})$ & -0.2472 & -0.1348 & {\bf -0.0467} & 0.042 & 0.1227 & 0.1925 & 0.1365 & 0.0578 \\              & sd($\widehat{\alpha}_{\text{\tiny PW}})$ &  0.0634  & 0.0908 & {\bf 0.1102} & 0.1342  & 0.1533  & 0.1527 & 0.1296 & 0.132 \\
\hline 1.7            & bias($\widehat{\alpha}_{\text{\tiny PW}})$ & -0.2484 & -0.1068 & 0.0152 & 0.1083 & 0.1703 & 0.1808 & {\bf 0.0263} & -0.0468 \\              & sd($\widehat{\alpha}_{\text{\tiny PW}})$ &  0.0781   & 0.0941 & 0.118 & 0.1209  & 0.0998  & 0.0759 & {\bf 0.1009} & 0.0956 \\
\hline 1.9           & bias($\widehat{\alpha}_{\text{\tiny PW}})$ &  -0.268  & -0.1074 & -0.0025 & 0.0439 & 0.0631 & {\bf 0.0278} & -0.1148 & -0.2035\\             & sd($\widehat{\alpha}_{\text{\tiny PW}})$ &   0.0888   & 0.1118  & 0.0922  & 0.0693  & 0.0483  & {\bf 0.0498} & 0.0855 & 0.0903 \\
\hline \hline \end{tabular}\end{center}}\end{table}

\section{Conclusion}
\label{sec:conclusion}
In this paper the consistency of pointwise and broadband spectral
estimators have been proved under general conditions. We show in
particular that a wide class of trawl processes satisfy these
conditions. However, in view of the sample mean behaviors exhibited in
\cite{DJLS-spa}, finer results on the asymptotic behavior of these
estimators should be treated under more specific assumptions. Up to
our best knowledge, very few results are available for non-linear
long-range dependent trawl processes. The rate of a wavelet based
semi-parametric estimator of the long-range dependence parameter is
studied in \cite{fay07} for so called Infinite source Poisson, which
can be seen as a specific trawl process with Poisson seed.  A first
step for future work could be to study the asymptotic behavior of such
an estimator.

\paragraph{Acknowledgements.}
This work has been developed within the MME-DII center of excellence
(ANR-11-LABEX-0023-01) and with the help of PAI-CONICYT MEC
80170072. We would like to thank Donatas Surgailis for fruitful
discussions about this work.

\clearpage



\end{document}